\theoremstyle{plain}
\newtheorem{Thm}{Theorem}[section]
\newtheorem{Prop}[Thm]{Proposition}
\newtheorem{Lem}[Thm]{Lemma}
\newtheorem{Cor}[Thm]{Corollary}
\theoremstyle{definition}
\newtheorem{Def}[Thm]{Definition}
\newtheorem{Rem}[Thm]{Remark}
\newtheorem{Exp}[Thm]{Example}
\newcommand{\defn}[1]{\textbf{\textit{#1}}}
\newcommand{\R}{\mathbb{R}}
\newcommand{\G}{\mathcal{G}}
\newcommand{\g}{\mathfrak{g}}
\newcommand{\too}{\longrightarrow}
\newcommand{\Ad}{\operatorname{Ad}}
\newcommand{\M}{\mathcal{M}}
\newcommand{\N}{\mathcal{N}}
\renewcommand{\t}{\mathfrak{t}}
\newcommand{\imp}{\text{imp}}
\newcommand{\sll}[1]{\mkern-4mu\mathbin{/\mkern-5mu/}_{\mkern-4mu{#1}}}
\newcommand{\sint}{_{\mathrm{int}}}
\newcommand{\seg}{%
\tikz[baseline=-0.5ex]{
  \fill (0,0) circle (1.5pt);
  \draw[line width=0.8pt] (0,0) -- (0.35,0);
  \fill (0.35,0) circle (1.5pt);
}}
\newcommand{\tqftcap}{\begin{tikzpicture}[
		baseline=-2.5pt,
		every tqft/.append style={
			transform shape, rotate=90, tqft/circle x radius=4pt,
			tqft/circle y radius= 2pt,
			tqft/boundary separation=0.6cm, 
			tqft/view from=incoming,
		}
		]
		\pic[
		tqft/cap,
		name=d,
		every incoming lower boundary component/.style={draw},
		every outgoing lower boundary component/.style={draw},
		every incoming boundary component/.style={draw},
		every outgoing boundary component/.style={draw},
		cobordism edge/.style={draw},
		cobordism height= 1cm,
		];
	\end{tikzpicture}}
\newcommand{\tqftcup}{\begin{tikzpicture}[
		baseline=-2.5pt,
		every tqft/.append style={
			transform shape, rotate=90, tqft/circle x radius=4pt,
			tqft/circle y radius= 2pt,
			tqft/boundary separation=0.6cm, 
			tqft/view from=incoming,
		}
		]
		\pic[
		tqft/cup,
		name=d,
		every incoming lower boundary component/.style={draw},
		every outgoing lower boundary component/.style={draw},
		every incoming boundary component/.style={draw},
		every outgoing boundary component/.style={draw},
		cobordism edge/.style={draw},
		cobordism height= 1cm,
		];
	\end{tikzpicture}}
\newcommand{\tqftpoptwoone}{%
\begin{tikzpicture}[
  baseline=2.5pt,
  every tqft/.append style={
    transform shape, rotate=90,
    tqft/circle x radius=2pt,
    tqft/circle y radius=1pt,
    tqft/boundary separation=0.3cm,
    tqft/view from=incoming,
  }
]
  \pic[
    tqft/reverse pair of pants, 
    name=d,
    every incoming lower boundary component/.style={draw},
    every outgoing lower boundary component/.style={draw},
    every incoming boundary component/.style={draw},
    every outgoing boundary component/.style={draw},
    cobordism edge/.style={draw},
    cobordism height=0.3cm,
  ];
\end{tikzpicture}%
}
\newcommand{\tqftpoponetwo}{%
\begin{tikzpicture}[
  baseline=-2.5pt,
  every tqft/.append style={
    transform shape, rotate=90,
    tqft/circle x radius=2pt,
    tqft/circle y radius=1pt,
    tqft/boundary separation=0.3cm,
    tqft/view from=incoming,
  }
]
  \pic[
    tqft/pair of pants, 
    name=d,
    every incoming lower boundary component/.style={draw},
    every outgoing lower boundary component/.style={draw},
    every incoming boundary component/.style={draw},
    every outgoing boundary component/.style={draw},
    cobordism edge/.style={draw},
    cobordism height=0.3cm,
  ];
\end{tikzpicture}%
}
\newcommand{\tqftcyl}{%
\begin{tikzpicture}[
  baseline=-2.5pt,
  every tqft/.append style={
    transform shape, rotate=90,
    tqft/circle x radius=3pt,
    tqft/circle y radius=1.5pt,
    tqft/boundary separation=0.3cm,
    tqft/view from=incoming,
  }
]
  \pic[
    tqft/cylinder,
    name=d,
    every incoming lower boundary component/.style={draw},
    every outgoing lower boundary component/.style={draw},
    every incoming boundary component/.style={draw},
    every outgoing boundary component/.style={draw},
    cobordism edge/.style={draw},
    cobordism height=0.5cm,
  ];
\end{tikzpicture}%
}
\newcommand{\tqftswap}{%
\begin{tikzpicture}[
  baseline=0.25pt,
  every tqft/.append style={
    transform shape, rotate=90,
    tqft/circle x radius=2pt,
    tqft/circle y radius=1pt,
    tqft/boundary separation=0.4cm,
    tqft/view from=incoming,
  }
]
  \pic[
    tqft/cylinder to next,
    name=d,
    every incoming lower boundary component/.style={draw},
    every outgoing lower boundary component/.style={draw},
    every incoming boundary component/.style={draw},
    every outgoing boundary component/.style={draw},
    cobordism edge/.style={draw},
    cobordism height=0.4cm,
  ];
  \pic[
    tqft/cylinder to prior,
    every incoming lower boundary component/.style={draw},
    every outgoing lower boundary component/.style={draw},
    every incoming boundary component/.style={draw},
    every outgoing boundary component/.style={draw},
    cobordism edge/.style={draw},
    cobordism height=0.4cm,
    at={($(d-incoming boundary)+(0,0.2)$)} 
  ];
\end{tikzpicture}%
}
\title{From multiplicative to additive geometry: deformation theory and 2D TQFT}
\date{\today}
\author{Mohamed Moussadek Maiza}
\address[Mohamed Moussadek Maiza]{D\'{e}partement de math\'{e}matiques \\ Universit\'{e} de Sherbrooke \\ 2500 Bd de l’Universit\'{e} \\ Sherbrooke, QC, J1K 2R1, Canada}
\email{mohamed.moussadek.maiza@usherbrooke.ca}
\begin{document}

\maketitle

\begin{abstract}
In this paper, we present a theory of Poisson deformation of Hamiltonian quasi-Poisson manifolds to Hamiltonian Poisson manifolds that include degenerate cases. More significantly, this theory extends to singular cases arising from symplectic implosion: we introduce a generalized Hamiltonian deformation theory and we show that the imploded cross section of the double $D(G)_\imp$ deforms to the implosion of the cotangent bundle $T^*G_\imp$ with applications to the master moduli space of $G$-flat connections.\\
In parallel, we construct a topological quantum field theory $\N: \text{Cob}_{2}\to \mathbf{QHam}$, where $\mathbf{QHam}$ is the category of quasi-Hamiltonian manifolds. To each cobordism $\Sigma$, we associate a quasi-Hamiltonian space $\N(\Sigma)$ built from the fusion product of copies of the double $D(G).$ We show that these spaces are invariant under the \emph{quiver homotopy} and that the composition of cobordisms corresponds to a quasi-Hamiltonian reduction. This provides a multiplicative version of the 2D Hamiltonian TQFT of Maiza-Mayrand.
\end{abstract}

\section{Introduction}

This paper brings together several geometric theories---quasi-Poisson geometry, Poisson geometry, symplectic implosion, and topological quantum field theory---and shows that they fit naturally into a unified deformation-theoretic picture. Quasi-Poisson geometry, introduced by Alekseev, Kosmann-Schwarzbach, and Meinrenken \cite{alekseev2002quasi}, provides a multiplicative analogue of Poisson geometry in which moment maps take values in a Lie group $G$ rather than in the dual $\g^*$, and the standard Jacobi identity is replaced by a twisted version governed by a canonical trivector $\phi \in \bigwedge^3 \g$. Symplectic implosion, developed by Guillemin, Jeffrey, and Sjamaar \cite{guillemin2002}, offers a mechanism for replacing a Hamiltonian $G$-space with a stratified Hamiltonian $T$-space by restricting the moment map to a Weyl chamber and collapsing fibers along commutator subgroups of stabilizers; the resulting imploded space is typically singular but admits a decomposition into symplectic strata with desirable properties for applications in geometric representation theory and quantization. Meanwhile, Witten's insight \cite{witten1988} that certain quantum field theories depend only on the topology of the underlying manifold, formalized by Atiyah \cite{atiyah1988} as the notion of a symmetric monoidal functor from a cobordism category to an algebraic target, provides a powerful organizing principle for gluing operations and topological invariants. The natural question of whether these multiplicative structures can be systematically deformed into their additive counterparts was addressed in our earlier work \cite{JMM2025deformations}, where we established a deformation theory for smooth quasi-Hamiltonian manifolds; the present paper extends this picture in two directions. First, we introduce a Poisson deformation theory for Hamiltonian quasi-Poisson manifolds, which accommodates degenerate cases. Second, we turn to singularities: many spaces of geometric interest, such as imploded cross-sections and moduli spaces of flat connections with boundary conditions, are not smooth but stratified. We address this by developing a deformation theory for stratified quasi-Hamiltonian spaces. Finally, we construct a two-dimensional topological quantum field theory valued in quasi-Hamiltonian manifolds. The main new contributions are Theorems \ref{thm:group-deformation}, \ref{thm 7.4}, \ref{gluing}, \ref{main TQFT} and Definitions \ref{Def 2.5} and \ref{gen H}.

\subsection{Main results}

Let $G$ be a compact Lie group. We introduce a Poisson deformation theory for Hamiltonian $G$-quasi-Poisson manifolds $(M,P,\phi_M,\mu)$, interpolating between multiplicative and additive structures; see Definition~\ref{Def 2.5}. Our first result establishes the basic deformation at the group level:

\begin{Thm}\label{Thm 1.1}
Let $G$ be a compact Lie group equipped with an $\mathrm{Ad}$-invariant symmetric bilinear form $\langle\cdot,\cdot\rangle_{\mathfrak{g}}$. Then $G$ admits a smooth Poisson deformation to its dual $\mathfrak{g}^*$.
\end{Thm}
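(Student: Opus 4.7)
The plan is to realize the deformation on $\mathfrak{g}$, identified with $\mathfrak{g}^*$ via the given $\Ad$-invariant bilinear form, and to pull back the multiplicative structure of $G$ through the exponential map. For $\epsilon \in \R$ I take the adjoint $G$-action on $\mathfrak{g}$ throughout, with moment map $\mu_\epsilon(X)=\exp(\epsilon X)\in G$ for $\epsilon\neq 0$ and $\mu_0(X)=X\in\mathfrak{g}^*$, and with the Cartan trivector scaled as $\phi_\epsilon=\epsilon^2 \phi$, where $\phi=\tfrac{1}{12}\langle\cdot,[\cdot,\cdot]\rangle$. Equivariance of $\mu_\epsilon$ is immediate from $\exp(\epsilon\,\Ad_g X)=g\exp(\epsilon X)g^{-1}$, so the content of the theorem is the construction of a smooth family of bivectors $P_\epsilon$ satisfying the Hamiltonian (quasi-)Poisson axioms of Definition~\ref{Def 2.5} and degenerating to the Kirillov-Kostant-Souriau bivector at $\epsilon=0$.

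For $\epsilon\neq 0$, I would define $P_\epsilon$ by pulling back the standard quasi-Poisson bivector on $G$-with-conjugation under $f_\epsilon(X)=\exp(\epsilon X)$, with the $\epsilon$-dependent rescaling forced by the choices of $\mu_\epsilon$ and $\phi_\epsilon$. Using the analytic expansion of $d\exp$, the bivector $P_\epsilon$ can be written in components as a power series in $\epsilon$ whose coefficients depend polynomially on the structure constants of $\mathfrak{g}$. The leading term as $\epsilon\to 0$ should be the Kirillov-Kostant-Souriau bivector $P_{\mathrm{KKS}}|_X(df,dg)=\langle X,[df,dg]\rangle$, while the higher-order corrections are controlled by the Baker-Campbell-Hausdorff series. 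Verifying the Hamiltonian quasi-Poisson axioms at each $\epsilon\neq 0$ reduces by naturality of the pullback to the classical case of $(G,P_G,\phi,\mathrm{id}_G)$, and at $\epsilon=0$ the vanishing of $\phi_0$ forces the usual Jacobi identity and recovers the Hamiltonian Poisson structure on $\mathfrak{g}^*$.

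The main obstacle I anticipate is smoothness at $\epsilon=0$. The naive pullback of $P_G$ along $f_\epsilon$ involves the inverse of $df_\epsilon$, which degenerates at $\epsilon=0$, so one must check that the explicit $\epsilon$ powers embedded in $\mu_\epsilon$ and $\phi_\epsilon$ are precisely those needed to cancel the apparent poles and produce a bivector family analytic in $\epsilon$. I plan to address this either by a direct calculation in exponential coordinates using the BCH formula, or more intrinsically by characterizing $P_\epsilon$ through the moment-map condition of Definition~\ref{Def 2.5} and showing that this system admits a unique solution analytic in $\epsilon$. Finally, because $G$ is compact and all structures are $\Ad$-invariant, the local construction near $0\in\mathfrak{g}$ will extend globally by equivariance and a partition-of-unity argument on $G$.
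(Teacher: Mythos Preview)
Your computational core is correct and matches the paper: one pulls back $tP_G$ through $(x,t)\mapsto\exp(tx)$, expands using $e^L_i=\eta(\mathrm{ad}_x)_{ij}\,\partial/\partial x_j$ with $\eta(s)=s/(1-e^{-s})$, and observes that after the rescaling the constant term drops by antisymmetry while the linear term in $t$ gives exactly the KKS bivector. The paper carries this out in three lines; your proposed BCH expansion would reproduce this, though the $\eta$-formulation is more direct than invoking BCH. Your alternative idea of characterizing $P_\epsilon$ uniquely through the moment-map condition will not work as stated, since that condition does not determine the bivector.

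There is, however, a structural gap in your setup. Definition~\ref{Def 2.5} requires a total space $\mathcal{QD}$ whose fibre over each $t\neq 0$ is isomorphic to $(G,tP_G,t^2\phi_G,\mathrm{id})$ as a Hamiltonian quasi-Poisson $G$-manifold---in particular, the fibre must be all of $G$. Your proposal realizes the family on $\mathfrak{g}$ via pullback along $f_\epsilon=\exp(\epsilon\,\cdot)$, so your fibres are copies of $\mathfrak{g}$, not of $G$; since $\exp$ is not a diffeomorphism, this cannot be repaired. The appeal to ``equivariance and a partition-of-unity argument on $G$'' does not help: conjugation is not transitive on $G$, and a partition of unity cannot turn $\mathfrak{g}$ into $G$. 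The paper avoids this by working on the ready-made deformation space $\mathcal{G}=(G\times\mathbb{R}^\times)\sqcup(\mathfrak{g}\times\{0\})$ whose smooth structure is imported from \cite{JMM2025deformations}; your exponential chart is then exactly the local chart of \cite[Lemma~2.1]{JMM2025deformations} near $t=0$, and away from $t=0$ the bivector $tP_G$ is smooth on $G\times\mathbb{R}^\times$ tautologically. So no globalization argument is needed---only the correct total space.
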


The equivalence between non-degenerate Hamiltonian quasi-Poisson manifolds and quasi-Hamiltonian manifolds \cite[Theorem~9.3]{alekseev2002quasi} implies that the double $D(G)$ and conjugacy classes deform to $T^*G$ and coadjoint orbits, respectively. More precisely, the families
\[
G\times \G, \qquad 
\mathcal{C}=\Bigl(\bigcup_{t\neq 0} C_{e^{tx}}\times\{t\}\Bigr)\cup\bigl(\mathcal{O}_x\times\{0\}\bigr)
\]
realize these deformations.

For implosion, we use the constructions of \cite{guillemin2002, hurtubise2006group}. Given a Hamiltonian $G$-manifold $(M,\nu)$ and a maximal torus $T\subset G$, the imploded cross-section is obtained by restricting $\nu$ to a Weyl chamber $\mathfrak{t}_+^*$ and collapsing points along commutator subgroups of stabilizers. Explicitly, for $m_1,m_2\in\nu^{-1}(\mathfrak{t}_+^*)$ we declare
\[
m_1\sim m_2 \quad\Longleftrightarrow\quad 
\exists\,k\in [G_{\nu(m_1)},G_{\nu(m_1)}]\ \text{such that}\ m_2=k m_1.
\]

\begin{Def}\cite[Definition~2.1]{guillemin2002}
The imploded cross-section of $M$ is the quotient
\[
M_{\mathrm{imp}}:=\nu^{-1}(\mathfrak{t}_+^*)/\!\sim,
\]
with projection $\pi$ and induced moment map $\nu_{\mathrm{imp}}:M_{\mathrm{imp}}\to\mathfrak{t}^*$.
\end{Def}

For quasi-Hamiltonian spaces, the analogous construction uses the alcove $\bar{A}\subset\mathfrak{t}$ and the exponential map:

\begin{Def}\cite[Definition~3.2]{hurtubise2006group}
For a quasi-Hamiltonian $G$-space $(M,\mu)$, the imploded cross-section is
\[
M_{\mathrm{imp}}:=\mu^{-1}(\exp(\bar{A}))/\!\sim,
\]
where $m_1\sim m_2$ if $m_2=g m_1$ for some $g\in [G_{\mu(m_1)},G_{\mu(m_1)}]$. The induced moment map takes values in $T$.
\end{Def}

We develop a notion of generalized Hamiltonian deformation (see Definition~\ref{gen}) suitable for stratified spaces such as imploded cross-sections. This theory allows us to compare multiplicative and additive structures even in the presence of singularities. Our first main result in this direction is the following.

\begin{Thm}\label{Thm 1.6}
Let $G$ be a simply connected compact Lie group. Then:
\begin{enumerate}
\item The imploded double $D(G)_{\mathrm{imp}}$ admits a generalized Hamiltonian deformation to the implosion of the cotangent bundle $(T^{*}G)_{\mathrm{imp}}$.
\item If $\Sigma$ is a compact connected surface of genus $g$ with $r+1$ boundary components, then the imploded moduli space of flat $G$--connections,
\[
\mathcal{M}(\Sigma)_{\mathrm{imp}}
\coloneqq
\underbrace{D(G)_{\mathrm{imp}}\circledast\cdots\circledast D(G)_{\mathrm{imp}}}_{r\ \text{factors}}
\circledast
\underbrace{\mathbf{D}(G)\circledast\cdots\circledast \mathbf{D}(G)}_{g\ \text{factors}},
\]
admits a generalized Hamiltonian deformation to
\[
\mathcal{N}(\Sigma)_{\mathrm{imp}}
\coloneqq
\underbrace{(T^{*}G)_{\mathrm{imp}}\circledast\cdots\circledast (T^{*}G)_{\mathrm{imp}}}_{r\ \text{factors}}
\circledast
\underbrace{\mathbf{T}^{*}G\circledast\cdots\circledast \mathbf{T}^{*}G}_{g\ \text{factors}}.
\]
\item If one fixes conjugacy-class boundary conditions $C$ on $\Sigma$, the corresponding moduli space of flat $G$--connections $\M(\Sigma,C)$ arises as a symplectic reduction of $\mathcal{N}(\Sigma)_{\mathrm{imp}}$.
\end{enumerate}
\end{Thm}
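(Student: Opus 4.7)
The overall strategy is to start from the Poisson deformation of $D(G)$ to $T^{*}G$ (a consequence of Theorem~\ref{Thm 1.1} combined with the equivalence between non-degenerate Hamiltonian quasi-Poisson manifolds and quasi-Hamiltonian manifolds recalled in the introduction), descend this deformation to the implosion quotients, and then propagate it through the fusion product to the full moduli space. For part (1), recall that when $G$ is simply connected the exponential identifies the closed alcove $\bar{A}\subset\t$ with a fundamental domain for conjugation via $\lambda\mapsto\exp\lambda$. The plan is to reparametrize the family $\{D(G)_{t}\}$ via the scaling $\lambda\mapsto t\lambda$, so that the group-valued moment-map constraint $\mu_{t}^{-1}(\exp(t\bar{A}))$ at $t>0$ degenerates continuously to the $\g^{*}$-valued constraint $\mu_{0}^{-1}(\bar{A})$ at $t=0$. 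For $\lambda\in\bar{A}$ and $t$ small the stabilizer $G_{\exp(t\lambda)}$ equals $G_{\lambda}$, so the equivalence relation defining implosion---collapse along $[G_{\lambda},G_{\lambda}]$---coincides on both sides of the family. The quotient therefore carries a one-parameter family of structures specializing to $D(G)_{\mathrm{imp}}$ at $t=1$ and to $(T^{*}G)_{\mathrm{imp}}$ at $t=0$; verifying the axioms of Definition~\ref{gen H} stratum-by-stratum completes the first claim.

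For part (2), both $\M(\Sigma)_{\mathrm{imp}}$ and $\N(\Sigma)_{\mathrm{imp}}$ are fusion products of the (internal and unreduced) doubles and their implosions. The deformation of the fusion product $\circledast$ to the Hamiltonian direct sum was established in the smooth case in \cite{JMM2025deformations}. Combining that with part (1) and with the compatibility between fusion and implosion---realized, at least on the principal stratum, in \cite{hurtubise2006group} and extending to the boundary strata stratum-by-stratum---produces a generalized Hamiltonian deformation of the full fusion product $\M(\Sigma)_{\mathrm{imp}}$ to $\N(\Sigma)_{\mathrm{imp}}$. The deformation of each internal double $\mathbf{D}(G)$ to $\mathbf{T}^{*}G$ is a direct corollary of the unreduced case applied to a double fused with itself along its diagonal boundary.

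For part (3), fix conjugacy-class boundary data $C=(C_{\lambda_{0}},\ldots,C_{\lambda_{r}})$ with each $\lambda_{i}\in\bar{A}$. The imploded reduction theorem of Guillemin--Jeffrey--Sjamaar (and its quasi-Hamiltonian analogue in \cite{hurtubise2006group}) asserts that for any Hamiltonian $G$-space $N$ the symplectic reduction of $N_{\mathrm{imp}}$ by $T$ at level $\lambda$ is canonically isomorphic to the reduction of $N$ by $G$ at the coadjoint orbit $\O_{\lambda}$. Applying this levelwise to each boundary circle of $\N(\Sigma)_{\mathrm{imp}}$ at the levels $(\lambda_{0},\ldots,\lambda_{r})$, and invoking the Alekseev--Malkin--Meinrenken identification of $\M(\Sigma,C)$ as a fused reduction of copies of $D(G)$ and $\mathbf{D}(G)$ at the prescribed conjugacy classes, realizes $\M(\Sigma,C)$ as the required symplectic reduction of $\N(\Sigma)_{\mathrm{imp}}$.

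The main technical obstacle lies in part (1). On the principal stratum (the interior of the alcove) the reparametrization $\lambda\mapsto t\lambda$ makes the deformation transparent, but along the walls of $\bar{A}$ the stabilizer type jumps and one must check that the local Poisson and generalized Hamiltonian structures deform coherently across strata of different dimensions. I expect this will require a Marle--Guillemin--Sternberg-type slice theorem in the implosion setting and an explicit comparison of local models of $D(G)_{\mathrm{imp}}$ and $(T^{*}G)_{\mathrm{imp}}$ near each face of $\bar{A}$, showing that the former contracts smoothly onto the latter as $t\to 0$. The smoothness and compatibility of this contraction across all strata simultaneously is the crux of the argument.
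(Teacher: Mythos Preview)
Your outline for parts (1) and (2) aligns with the paper's strategy: the deformation of $D(G)$ to $T^*G$ is imploded stratum-by-stratum using the rescaling $\lambda \mapsto t\lambda$, and then propagated through fusion via Theorem~\ref{thm 6.5}. However, you misidentify the technical crux. Definition~\ref{gen H} only asks for stratum-wise smooth deformation together with a continuous global moment map and decent point-set topology on the total space; no cross-stratum ``coherence'' or Marle--Guillemin--Sternberg slice argument is needed. What the paper actually does is (a) a direct computation (Proposition~\ref{good prop}) showing that on each face $\sigma$ of $\bar A$ with $0\in\bar\sigma$ the rescaled two-form $\tfrac{1}{t}\lambda^\sigma$ extends smoothly at $t=0$ to the Hamiltonian two-form $\omega^{\tau_\sigma}$ on the unique Weyl-chamber face $\tau_\sigma\supset\sigma$, and (b) a topological lemma (Lemma~\ref{lem 6.2}) showing the imploded total space is Hausdorff, locally compact and second countable with proper projection. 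You also miss a structural point: faces $\hat\sigma$ of the alcove with $0\notin\bar{\hat\sigma}$ have no Weyl-chamber counterpart, and the paper simply lets those strata deform to the empty set. This dichotomy is precisely why your scaling heuristic $G_{\exp(t\lambda)}=G_\lambda$ works cleanly only on the faces through the origin.

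For part (3) there is a genuine gap. You attempt a direct identification of $\M(\Sigma,C)$ with a reduction of $\N(\Sigma)_{\mathrm{imp}}$ via the abelianization theorem, but $\M(\Sigma,C)$ lives on the multiplicative side (it is a quasi-Hamiltonian reduction of $\M(\Sigma)$) while $\N(\Sigma)_{\mathrm{imp}}$ is the additive limit; these are not isomorphic spaces. The assertion is that one \emph{deforms} to a symplectic reduction of the other. The paper obtains this from a separate reduction-compatibility result for generalized Hamiltonian deformations (Theorem~\ref{reduction}), applied to the deformation of part (2) along the curve $t\mapsto e^{ty}$ degenerating to $y\in\t$. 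Your invocation of the imploded-reduction theorem of \cite{guillemin2002,hurtubise2006group} only converts $G$-reductions to $T$-reductions on each side separately; it does not bridge the multiplicative and additive pictures.
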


We now turn to the quiver-theoretic viewpoint. For a connected oriented quiver $\Gamma=(E,V,s,t)$, the Lax--Kirchhoff moduli space $\mathcal{M}(\Gamma)$ constructed in \cite[Theorem~3.1, Theorem~5.2]{maiza2025lax} is a Hamiltonian $G^{\partial\Gamma}$--manifold, canonically identified with the reduction $T^{*}G^{E}\sll{} G^{\Gamma_{\mathrm{int}}}$. Motivated by the fact that the double $D(G)$ deforms to the cotangent bundle $T^{*}G$ and that the deformation theory is compatible with fusion and reduction \cite[Theorem~3.3, Corollary~3.2]{JMM2025deformations}, we introduce a multiplicative analogue of $\N(\Gamma)$ that deforms smoothly to $\M(\Gamma)$ by setting
\[
\mathcal{N}_{G}(\Gamma)
=
\bigotimes^{E} D(G)\sll{} G^{\Gamma_{\mathrm{int}}},
\qquad
\bigotimes^{E} D(G)
=
\underbrace{D(G)\circledast\cdots\circledast D(G)}_{E\ \text{factors}}.
\]
The space $\mathcal{N}_{G}(\Gamma)$ is a smooth quasi-Hamiltonian $G^{\partial\Gamma}$--manifold (see Section~\ref{sect 8.2}). 
A key feature of the construction is its compatibility with the gluing operation on quivers introduced in \cite[Section~6]{maiza2025lax}. Suppose $\Gamma_{1}$ and $\Gamma_{2}$ are connected oriented quivers whose outgoing and incoming boundary components match, i.e.\ $\partial\Gamma_{1}^{+}=\partial\Gamma_{2}^{-}=D$. Then the multiplicative moduli space associated to the glued quiver $\Gamma_{1}\star\Gamma_{2}$ is obtained by a quasi-Hamiltonian reduction:

\begin{Thm}
If $\partial\Gamma_{1}^{+}=\partial\Gamma_{2}^{-}=D$, then there is a canonical isomorphism
\[
\mathcal{N}_{G}(\Gamma_{1}\star\Gamma_{2})
\cong
\bigl(\mathcal{N}_{G}(\Gamma_{1})\circledast\mathcal{N}_{G}(\Gamma_{2})\bigr)\sll{} G^{D},
\]
as quasi-Hamiltonian $G^{\partial\Gamma_{1}^{-}}\times G^{\partial\Gamma_{2}^{+}}$--spaces.
\end{Thm}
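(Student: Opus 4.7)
The plan is to unpack both sides combinatorially and then reorganize the construction using reduction in stages together with the symmetric monoidal structure of fusion. By the gluing construction of \cite[Section~6]{maiza2025lax}, the edge set of $\Gamma_{1}\star\Gamma_{2}$ is the disjoint union $E_{1}\sqcup E_{2}$, while its vertex set identifies each $d\in\partial\Gamma_{1}^{+}$ with the corresponding vertex in $\partial\Gamma_{2}^{-}$. Consequently $(\Gamma_{1}\star\Gamma_{2})_{\mathrm{int}}=\Gamma_{1,\mathrm{int}}\sqcup\Gamma_{2,\mathrm{int}}\sqcup D$ and $\partial(\Gamma_{1}\star\Gamma_{2})=\partial\Gamma_{1}^{-}\sqcup\partial\Gamma_{2}^{+}$, so that unpacking the definition gives
\[
\mathcal{N}_{G}(\Gamma_{1}\star\Gamma_{2})
= \bigotimes^{E_{1}\sqcup E_{2}} D(G)\sll{} G^{\Gamma_{1,\mathrm{int}}\sqcup\Gamma_{2,\mathrm{int}}\sqcup D}.
\]

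Next I would split the fusion and the reduction. Fusion of quasi-Hamiltonian $G$-spaces is associative and symmetric monoidal, so $\bigotimes^{E_{1}\sqcup E_{2}}D(G)\cong \bigotimes^{E_{1}}D(G)\circledast\bigotimes^{E_{2}}D(G)$. Moreover, by construction the subgroup $G^{\Gamma_{i,\mathrm{int}}}$ acts only on the $i$-th fusion factor, since internal vertices of $\Gamma_{i}$ are endpoints only of edges in $E_{i}$. Reduction by a product group whose factors act on separate fusion components commutes with the fusion operation, so I can carry out the reductions by $G^{\Gamma_{1,\mathrm{int}}}$ and $G^{\Gamma_{2,\mathrm{int}}}$ \emph{inside} each factor before fusing, yielding
\[
\bigotimes^{E_{1}\sqcup E_{2}}D(G)\sll{} G^{\Gamma_{1,\mathrm{int}}\sqcup\Gamma_{2,\mathrm{int}}}
\;\cong\; \mathcal{N}_{G}(\Gamma_{1})\circledast\mathcal{N}_{G}(\Gamma_{2}).
\]
Applying reduction in stages to the remaining factor $G^{D}$ then produces the stated identification, while the residual boundary action of $G^{\partial\Gamma_{1}^{-}}\times G^{\partial\Gamma_{2}^{+}}$ is preserved throughout.

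The main obstacle will be verifying that the $G^{D}$-moment map used to reduce the fusion product on the right-hand side coincides with the $G^{D}$-moment map arising at the identified internal vertices on the left. After fusion, the moment-map component at $d\in D$ is the (non-commutative) product $\mu_{1}^{d}\cdot\mu_{2}^{d}$ of the two boundary moment maps, by the definition of the fusion product in \cite{alekseev2002quasi}. On the other side, the vertex moment map at the merged internal vertex $d$ in $\Gamma_{1}\star\Gamma_{2}$ is the ordered product of edge contributions at $d$, which are precisely the edges of $\Gamma_{1}$ incident to $d\in\partial\Gamma_{1}^{+}$ together with those of $\Gamma_{2}$ incident to $d\in\partial\Gamma_{2}^{-}$. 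A choice of cyclic ordering at each glued vertex that is compatible with the orientations of the two quivers will make these two products agree; this is the only place where some care with conventions is needed. Once the moment-map matching is established, the claimed isomorphism of quasi-Hamiltonian spaces follows formally from the universal property of quasi-Hamiltonian reduction, and the residual $G^{\partial\Gamma_{1}^{-}}\times G^{\partial\Gamma_{2}^{+}}$-equivariance is automatic.
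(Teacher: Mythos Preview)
Your proposal is correct and follows essentially the same strategy as the paper: split the edge set as $E_{1}\sqcup E_{2}$, identify $(\Gamma_{1}\star\Gamma_{2})_{\mathrm{int}}=\Gamma_{1,\mathrm{int}}\sqcup\Gamma_{2,\mathrm{int}}\sqcup D$, and then invoke reduction in stages together with the compatibility of fusion and reduction. Your version is in fact more explicit than the paper's, which simply asserts that ``reducing by this action implements exactly the same identification''; your final paragraph isolating the moment-map matching at each $d\in D$ (fusion gives $\mu_{1}^{d}\cdot\mu_{2}^{d}$, while the glued-vertex moment map is the ordered product of incident edge contributions) is the one genuine verification the paper leaves implicit, and you have correctly flagged where the ordering conventions enter.
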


Each quiver $\Gamma$ determines an oriented surface $\Sigma_{\Gamma}$ by replacing edges with cylinders and gluing along punctured spheres. Although this construction is not unique, the ambiguity is controlled by quiver homotopy \cite[Definition~6.1, Proposition~6.2]{maiza2025lax}.

\begin{Thm}
If $\Gamma_{1}$ and $\Gamma_{2}$ are homotopic, then
\[
\mathcal{N}_{G}(\Gamma_{1})\cong \mathcal{N}_{G}(\Gamma_{2})
\]
as quasi-Hamiltonian spaces. Consequently, the isomorphism class of $\mathcal{N}_{G}(\Gamma)$ depends only on the associated cobordism $\Sigma_{\Gamma}$.
\end{Thm}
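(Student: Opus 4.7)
The plan is to reduce the statement to a finite list of \emph{elementary moves} generating quiver homotopy, and then verify invariance of $\mathcal{N}_{G}$ under each. By \cite[Proposition~6.2]{maiza2025lax}, any homotopy between connected oriented quivers decomposes into a finite sequence of local moves: reversal of the orientation of an edge; subdivision of an edge by inserting a bivalent interior vertex; and its inverse, the contraction of a bivalent interior vertex. The second statement of the theorem then follows formally from the first, because the same proposition identifies the set of homotopy classes of quivers with the set of isomorphism classes of the associated $2$D cobordisms $\Sigma_{\Gamma}$; thus any two quivers yielding homeomorphic cobordisms are homotopic, and part (i) forces the corresponding quasi-Hamiltonian spaces to be isomorphic.

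The substance of the argument is therefore to exhibit, for each elementary move, a canonical isomorphism of quasi-Hamiltonian $G^{\partial\Gamma}$-spaces. For the edge-reversal move I would use the natural involution $\tau\colon D(G)\to D(G)$ that swaps the two factor moment maps (on $D(G)\cong G\times G$ this is essentially $(a,b)\mapsto(b^{-1},a^{-1})$, appropriately twisted to respect the fusion two-form of \cite{alekseev2002quasi}). Applied to the copy of $D(G)$ labelling the reversed edge inside $\bigotimes^{E}D(G)$, and combined with the compatibility of the internal reduction $\sll{}G^{\Gamma_{\mathrm{int}}}$ with inversion on the relevant group factors, $\tau$ descends to a quasi-Hamiltonian isomorphism on $\mathcal{N}_{G}(\Gamma)$ intertwining the boundary moment maps.

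For the subdivision move, which inserts a bivalent interior vertex in the middle of an edge $e$, the crucial input is the multiplicative convolution identity
\[
D(G)\circledast D(G)\sll{}G\;\cong\;D(G)
\]
as quasi-Hamiltonian $G\times G$-spaces, the multiplicative analogue of $T^{*}G\times T^{*}G\sll{}G\cong T^{*}G$. Granting this, the compatibility of fusion with reduction in stages established in \cite[Theorem~3.3]{JMM2025deformations} implies that introducing or contracting a bivalent interior vertex leaves $\mathcal{N}_{G}(\Gamma)=\bigotimes^{E}D(G)\sll{}G^{\Gamma_{\mathrm{int}}}$ unchanged up to canonical isomorphism, with the boundary $G^{\partial\Gamma}$-action and moment map preserved.

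The main obstacle I anticipate is the verification of the convolution identity $D(G)\circledast D(G)\sll{}G\cong D(G)$ at the \emph{full} quasi-Hamiltonian level, tracking the Alekseev--Kosmann-Schwarzbach--Meinrenken bivector, the twisted two-form, and the trivector $\phi\in\bigwedge^{3}\g$ through the fusion and reduction procedure: the additive counterpart is elementary, but the multiplicative version requires careful bookkeeping of boundary terms in reduction and of the non-associativity cocycles controlled by $\phi$. Once this identity is in hand, the remainder of the argument is a routine reduction-in-stages computation on the graph $\Gamma$, and the cobordism invariance stated at the end of the theorem follows immediately from \cite[Proposition~6.2]{maiza2025lax}.
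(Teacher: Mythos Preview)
Your overall strategy---reduce to a generating set of elementary moves and verify invariance for each via a ``$D(G)$ acts as the identity'' convolution identity---is the same as the paper's. The paper, however, works with a \emph{single} generating move: contracting an edge $e_{0}$ whose endpoints are both interior (of arbitrary degree), rather than your triple of edge reversal, bivalent subdivision, and bivalent contraction. This lets the paper avoid any separate discussion of orientation reversal and of the involution $\tau$ on $D(G)$; the whole proof is a one-line application of reduction in stages together with the identity
\[
(M\circledast D(G))\sll{}G\;\cong\;M
\]
for an arbitrary quasi-Hamiltonian $H\times G$-space $M$, applied with $M=\bigotimes_{e\neq e_{0}}D(G)$ after peeling off the $e_{0}$-factor. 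Your convolution identity $D(G)\circledast D(G)\sll{}G\cong D(G)$ is just the special case $M=D(G)$ of this.

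Two remarks. First, what you flag as the ``main obstacle''---verifying the convolution identity at the full quasi-Hamiltonian level, tracking $\phi$ and the fusion two-form---is not an obstacle at all: the general identity $(M\circledast D(G))\sll{}G\cong M$ is \cite[Example~9.1]{alekseev1998}, and the paper simply cites it. No bookkeeping of trivectors or non-associativity cocycles is required. Second, your citation of \cite[Theorem~3.3]{JMM2025deformations} for ``compatibility of fusion with reduction in stages'' is misplaced: that theorem concerns Hamiltonian \emph{deformations}, not reduction in stages. The relevant input is the standard quasi-Hamiltonian reduction-in-stages principle, which is already implicit in \cite{alekseev1998}. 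With these corrections your argument goes through, but it is longer than necessary; the paper's choice of generating move makes the edge-reversal step and the special-case convolution identity superfluous.
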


For any connected two-dimensional cobordism $\Sigma$, there is a unique connected oriented quiver $\Gamma$ with $\Sigma=\Sigma_{\Gamma}$. We therefore define $\mathcal{N}(\Sigma)$ to be the homotopy class of $\mathcal{N}_{G}(\Gamma)$. Using the results above, we obtain a $2$--dimensional TQFT valued in quasi-Hamiltonian manifolds. We introduce a category $\mathbf{QHam}$ whose objects are compact Lie groups and whose morphisms $G\to H$ are quasi-Hamiltonian $G\times H$--spaces. Composition is given by quasi-Hamiltonian reduction of the fusion product. Assigning $S^{1}\mapsto G$ and $\Sigma_{\Gamma}\mapsto\mathcal{N}_{G}(\Gamma)$ produces a symmetric monoidal functor
\[
\mathcal{N}:\mathbf{Cob}_{2}\longrightarrow\mathbf{QHam},
\]
which defines the desired TQFT.

\subsection*{Organization of the paper.}
Section~2 introduces Poisson deformations of Hamiltonian quasi-Poisson manifolds (Definition~2.2) and proves compatibility with fusion (subsection 2.2). Section 3 presents examples, including the deformation of $G$ to $\mathfrak{g}^{*}$ (Theorem 3.1) and of $D(G)$, conjugacy classes, and moduli spaces (Corollary 3.6). Section 4 develops a generalized deformation theory for stratified spaces (Definition 4.6) and establishes compatibility with partial fusion (Theorem 4.7) and reduction (Theorem 4.9). Section 5 applies this to $D(G)_{\mathrm{imp}}$ (Theorem 5.3) and to the master moduli space (Corollaries 5.4--5.5). Section 6 constructs the spaces $\mathcal{N}_{G}(\Gamma)$, proves their gluing formula (Theorem 6.2), and establishes homotopy invariance (Proposition 6.3). Section 7 assembles these ingredients into the TQFT $\mathcal{N}:\mathbf{Cob}_{2}\to\mathbf{QHam}$ (Theorem 7.3).

\subsection*{Acknowledgments} We thank Jean-Philippe Burelle, Maxence Mayrand and Yassine Ait Mohamed for useful discussions. We also thank the Institut des sciences mathématiques ISM and the Ernest-Monga scholarship for their financial support.

\subsection{Notations and conventions}

Throughout Sections ~\ref{sec 2} and \ref{sec 4}, we adopt the notations and conventions of \cite{alekseev2002quasi}. $G$ denotes a compact Lie group equipped with a non-degenerate $\mathrm{Ad}$-invariant symmetric bilinear form $\langle \cdot, \cdot\rangle_{\mathfrak{g}}$ on its Lie algebra $\mathfrak{g}$, with $\|\cdot\|$ the induced norm. In Section~\ref{sect 6}, we further assume that $G$ is simply connected.

Let $M$ be a smooth $G$-manifold. For $x \in \mathfrak{g}$, we denote by $x_{M} \in \mathcal{X}(M)$ the fundamental vector field generated by the infinitesimal action:
\[
x_{M}(m) = \frac{d}{dt}\Big|_{t=0}\exp(-tx)\cdot m.
\]
This extends to a smooth map $\bigwedge \mathfrak{g} \to \mathcal{X}(\bigwedge TM)$, compatible with wedge products and Schouten brackets \cite[Section~1]{alekseev2002quasi}. For $\alpha \in \bigwedge \mathfrak{g}$, we write $\alpha^L$ and $\alpha^R$ for the corresponding left and right multi-vector fields on $G$.

Given a bivector $P$ on $M$, the map $P^{\sharp}: T^{*}M \to TM$ is defined by $\alpha \mapsto P(\alpha,\cdot)$. Similarly, for a two-form $\omega$, we define $\omega^{\flat}: TM \to T^{*}M$ by $v \mapsto \omega(v,\cdot)$. For $x \in \mathfrak{g}$, we denote by $-x^{R}$ (resp.\ $x^{L}$) the vector field generated by left multiplication $(g,m) \mapsto gm$ (resp.\ by the right action $(g,m) \mapsto mg^{-1}$).

Fix an orthonormal basis $(e_{i})$ of $\mathfrak{g}$ with respect to $\langle\cdot,\cdot\rangle_{\mathfrak{g}}$. The \emph{Cartan tri-vector} $\phi \in \bigwedge^3 \mathfrak{g}$ is defined by
\[
\phi = \frac{1}{12} f_{ijk}\, e_{i}\wedge e_{j}\wedge e_{k}, \quad \text{where} \quad f_{ijk} = \langle e_{i}, [e_{j},e_{k}] \rangle_{\mathfrak{g}}.
\]
For a linear map $L: \mathfrak{g} \to \mathfrak{g}$, we write $Le_{i} = L_{ji}e_{j}$, so that $(\mathrm{ad}_{x})_{jk} = -f_{jki}x_{i}$. Finally, the bivector $\psi \in \bigwedge^{2}(\mathfrak{g} \oplus \mathfrak{g})$ given by $\psi = \frac{1}{2}e_{i}^{1}\wedge e_{i}^{2}$ plays a central role in the \textbf{internal fusion} of quasi-Poisson spaces.

\section{Deformation of Hamiltonian quasi-Poisson $G$-spaces} \label{sec 2}
In this section we develop a deformation theory for Hamiltonian $G$–quasi-Poisson manifolds, extending the quasi-Hamiltonian deformation theory introduced in our earlier work \cite{JMM2025deformations}. For background on quasi-Poisson geometry and its foundational constructions, we refer to the exposition of Alekseev, Kosmann-Schwarzbach, and Meinrenken \cite{alekseev2002quasi}.

\subsection{Deformation of quasi-Poisson manifolds}

Let $G$ be a compact Lie group. The deformation space
\[
\mathcal{G}\coloneqq \mathcal{D}(G,\{1\})=(G\times\mathbb{R}^{*})\;\bigsqcup\;(\mathfrak{g}\times\{0\}),
\]
introduced in \cite{JMM2025deformations}, plays a central role in our notion of \emph{Poisson deformation}. We write $\mathcal{K}=\mathcal{D}(K,\{1\})$ for any Lie group $K$, and $\pi_{\mathcal{K}}: \mathcal{K} \too \R$ the projection.\\
The deformation theory of \cite{JMM2025deformations} concerns non-degenerate quasi-Hamiltonian structures. Our goal in this section is to extend this to the broader class of Hamiltonian $G$--quasi-Poisson manifolds $(M,P,\phi_{M},\mu)$ in the sense of \cite[\text{Definition 2.2}]{alekseev2002quasi}, which includes degenerate examples. We aim to interpolate smoothly between such structures and their additive counterparts, Hamiltonian $G$--Poisson manifolds $(N,P_{0},\nu)$ \cite[\text{Section 2}]{alekseev2002quasi}.

\begin{Def}\label{Def 2.5}
A \emph{Poisson deformation} of a Hamiltonian quasi-Poisson $G$--manifold $(M,P,\phi_{M},\mu)$ to a Hamiltonian Poisson $G$--manifold $(N,P_{0},\nu)$ consists of a smooth manifold $\mathcal{QD}$ equipped with:
\begin{enumerate}
\item a $G$--invariant smooth submersion $\pi:\mathcal{QD}\to[0,1]\subset \R$,
\item a smooth $G$--action on $\mathcal{QD}$,
\item a smooth $G$--equivariant map $\tilde{\mu}:\mathcal{QD}\to\mathcal{G}$ satisfying $\pi_{\mathcal{G}}\circ\tilde{\mu}=\pi$,
\item a smooth section $\tilde{P}$ of $\bigwedge^{2}(\ker d\pi)\to\mathcal{D}$,
\item a smooth section $\phi_{\mathcal{QD}}$ of $\bigwedge^{3}(\ker d\pi)\to\mathcal{D}$.
\end{enumerate}
Writing $\mathcal{QD}_{t}=\pi^{-1}(t)$ for the fiber over $t\in(0,1]$, with $\tilde{P}_{t}$, $\phi_{t}$, $\tilde{\mu}_{t}$ denoting the restricted data, we require:
\begin{itemize}
\item $(\mathcal{QD}_{t},\tilde{P}_{t},\phi_{t},\tilde{\mu}_{t})\cong (M,tP,t^{2}\phi_{M},\mu)$ as Hamiltonian quasi-Poisson $G$--manifolds,
\item $(\mathcal{QD}_{0},\tilde{P}_{0},\nu)\cong (N,P_{0},\mu_{0})$ as Hamiltonian Poisson $G$--manifolds, with $\phi_{0}=0$.
\end{itemize}
In particular, $\mathcal{QD}_{1}\cong M$ and $\mathcal{QD}_{0}\cong N$.
\end{Def}

\begin{Rem}
When the quasi-Poisson structure is non-degenerate, Definition~\ref{Def 2.5} recovers the quasi-Hamiltonian deformation theory of \cite{JMM2025deformations} via the equivalence Theorem~\ref{equivalence}. The present formulation additionally accommodates degenerate structures such as Example~\ref{G}.
\end{Rem}

\subsection{Compatibility with fusion}\label{sec 3}

A key result of the deformation theory in \cite{JMM2025deformations} is its compatibility with fusion products. We now establish the analogous result in the quasi-Poisson setting. Recall from \cite[\text{Section 3}]{alekseev2002quasi} the bivector $\psi=\tfrac{1}{2}e^{1}_{i}\wedge e^{2}_{i}\in\bigwedge^{2}(\mathfrak{g}\oplus\mathfrak{g})$, satisfying
\[
[\psi,\psi]=\mathrm{diag}(\phi)-\phi^{1}-\phi^{2},
\]
where $\phi$ is the Cartan 3--vector. Given a Hamiltonian $(G\times G\times H)$--quasi-Poisson manifold $(M,P_{M},\phi_{M},(\mu_{1},\mu_{2},\mu_{3}))$, internal fusion yields the $G\times H$ Hamiltonian quasi-Poisson manifold
\[
(M,P_{M}-\psi_{M},\mathrm{diag}(\phi)_{M},(\mu_{1}\mu_{2},\mu_{3}),G\times H);
\]
see \cite[\text{Proposition 5.1}]{alekseev2002quasi}. The Hamiltonian Poisson analogue replaces multiplication by addition: $(N,P_{0},(\nu_{1},\nu_{2},\nu_{3}))\mapsto (N,P_{0},(\nu_{1}+\nu_{2},\nu_{3}))$.

The smoothness of the multiplication map $m$ on $\mathcal{G}$, established in \cite[\text{Proposition 2.3}]{JMM2025deformations}, ensures that fusion is compatible with deformation:

\begin{Prop}\cite[\text{Proposition 2.3}]{JMM2025deformations}\label{prop:mult-smooth}
The map
\[
m:\mathcal{G}\times_{\mathbb{R}}\mathcal{G}\to\mathcal{G},\qquad
((a,t),(b,t))\mapsto
\begin{cases}
(ab,t), & t\neq 0,\\[4pt]
(a+b,0), & t=0,
\end{cases}
\]
is smooth.
\end{Prop}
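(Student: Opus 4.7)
The plan is to reduce the smoothness assertion to a local computation in the canonical charts of $\mathcal{G}$ around the fiber $\mathfrak{g}\times\{0\}$, since away from $t=0$ the smoothness of $m$ is immediate.

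On the open subset $\mathcal{G}|_{t\neq 0}\cong G\times\R^{*}$, the restriction of $m$ is the product of the group multiplication of $G$ with the identity on $\R^{*}$, hence smooth. It therefore suffices to verify smoothness at a pair $((x_{0},0),(y_{0},0))\in\mathcal{G}\times_{\R}\mathcal{G}$. For this I would use the canonical charts defining the smooth structure of $\mathcal{G}$: for every $x_{0}\in\mathfrak{g}$ there exist a neighborhood $V\subset\mathfrak{g}$ of $x_{0}$ and $\epsilon>0$ such that
\[
\Phi:V\times(-\epsilon,\epsilon)\too\mathcal{G},\qquad
(x,t)\mtoo
\begin{cases}(\exp(tx),t),&t\neq 0,\\ (x,0),&t=0,\end{cases}
\]
is a diffeomorphism onto a neighborhood of $(x_{0},0)$; the smooth atlas of $\mathcal{G}=\mathcal{D}(G,\{1\})$ from \cite{JMM2025deformations} is generated by such charts together with the product chart on $G\times\R^{*}$.

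In the chart $\Phi\times_{\R}\Phi$ the fiber product is modeled on $V\times V\times(-\epsilon,\epsilon)$, and for $t\neq 0$ the multiplication reads
\[
\Phi^{-1}\circ m\circ(\Phi\times_{\R}\Phi)(x,y,t)
=\Bigl(\tfrac{1}{t}\log\bigl(\exp(tx)\exp(ty)\bigr),\;t\Bigr).
\]
Set $z(t,x,y)\coloneqq\log(\exp(tx)\exp(ty))$, which is smooth in a neighborhood of $(0,x_{0},y_{0})$. Since $z(0,x,y)=0$ identically, Hadamard's lemma in the variable $t$ produces a smooth function $w(t,x,y)$ with $z(t,x,y)=t\cdot w(t,x,y)$, and differentiating at $t=0$ identifies $w(0,x,y)=\partial_{t}z|_{t=0}=x+y$. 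Consequently,
\[
\Phi^{-1}\circ m\circ(\Phi\times_{\R}\Phi)(x,y,t)=(w(t,x,y),t),
\]
which is manifestly smooth on $V\times V\times(-\epsilon,\epsilon)$ and specializes at $t=0$ to the additive rule $(x+y,0)$ appearing in the definition of $m$.

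The main (mild) obstacle is ensuring that these $\Phi$--charts cover the entire fiber $\mathfrak{g}\times\{0\}$ rather than only a neighborhood of $(0,0)$, and that they glue consistently with the product chart on $G\times\R^{*}$. This is part of the construction of $\mathcal{G}$ in \cite{JMM2025deformations}, and relies on the existence of a uniform injectivity radius for $\exp$ on bounded subsets of $\mathfrak{g}$, which is automatic since $G$ is compact. Once this is in place, the Hadamard argument above is purely local and identical at every base point, so smoothness of $m$ follows.
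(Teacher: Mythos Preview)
The paper does not prove this proposition itself; it is quoted verbatim from \cite[Proposition~2.3]{JMM2025deformations} and used as a black box. Your argument is correct and is precisely the natural proof: the chart you call $\Phi$ is the map $\varphi$ of \cite[Lemma~2.1]{JMM2025deformations} (restated here as Lemma~\ref{lem 6.4}), and pulling $m$ back through $\varphi\times_{\R}\varphi$ followed by the Hadamard/BCH expansion is exactly how one verifies smoothness across $t=0$.

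One cosmetic point: you write $\Phi^{-1}\circ m\circ(\Phi\times_{\R}\Phi)$ with the \emph{same} $\Phi$ on source and target, but the image point $(x_{0}+y_{0},0)$ need not lie in the domain $V$ centered at $x_{0}$. Strictly speaking you should take a separate chart $\Phi'$ centered at $x_{0}+y_{0}$ on the target; since by \cite[Lemma~2.1]{JMM2025deformations} these charts cover all of $\mathfrak{g}\times\{0\}$ and are compatible, this changes nothing in the computation. With that adjustment your proof is complete and aligns with the framework used throughout the paper.
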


\begin{Lem}\label{lem 3.2}
Let $(\mathcal{QD},\tilde{P},\phi_{\mathcal{QD}},\tilde{\mu},G\times G\times H)$ be a Poisson deformation of $(M,P_{M},\phi_{M},\mu)$ to $(N,P_{0},\nu)$. Then the bivector $\psi_{M}$ and trivector $\mathrm{diag}(\phi)_{M}$ extend to smooth sections
\[
\psi_{\mathcal{QD}}\in\Gamma\!\left(\bigwedge^{2}\ker d\pi\right),\qquad
\mathrm{diag}(\phi)_{\mathcal{QD}}\in\Gamma\!\left(\bigwedge^{3}\ker d\pi\right),
\]
with $\psi_{t}=t\,\psi_{\mathcal{QD}_{t}}$ for $t\neq 0$, $\psi_{0}=0$, and similarly for $\mathrm{diag}(\phi)_{\mathcal{QD}}$.
\end{Lem}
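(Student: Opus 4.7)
The plan is to construct both extensions directly from the fundamental vector fields of the ambient $(G\times G\times H)$-action on $\QD$, rescaled by the submersion coordinate $\pi$ itself. Because the deformation is $(G\times G\times H)$-equivariant and $\pi$ is invariant under the action, these multi-vector fields are automatically smooth and vertical; the scaling by $\pi$ (resp.\ $\pi^{2}$) then produces exactly the $t$ (resp.\ $t^{2}$) that appears on the fibers.

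Concretely, the smooth action of $G\times G\times H$ on $\QD$ gives, for each $\xi\in\g\oplus\g\oplus\h$, a smooth fundamental vector field $\xi_{\QD}\in\X(\QD)$; the functorial extension to multi-vectors recalled in Section~1 of \cite{alekseev2002quasi} upgrades this to a Schouten-algebra morphism $\bigwedge^{\bullet}(\g\oplus\g\oplus\h)\to\Gamma(\bigwedge^{\bullet}T\QD)$. Invariance of $\pi$ under each factor $G$, $G$, $H$ forces the image to land in $\Gamma(\bigwedge^{\bullet}\ker d\pi)$. I would then set
\[
\psi_{\QD}\;:=\;\pi\cdot\tfrac{1}{2}\,(e_{i}^{1})_{\QD}\wedge (e_{i}^{2})_{\QD},
\]
\[
\mathrm{diag}(\phi)_{\QD}\;:=\;\pi^{2}\cdot\tfrac{1}{12}f_{ijk}\,(e_{i}^{1}+e_{i}^{2})_{\QD}\wedge (e_{j}^{1}+e_{j}^{2})_{\QD}\wedge (e_{k}^{1}+e_{k}^{2})_{\QD}.
\]
Each is a product of a smooth function on $\QD$ with a smooth vertical wedge of fundamental vector fields, hence a smooth section of the required exterior power of $\ker d\pi$.

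To verify the fiberwise formula, fix $t\in(0,1]$ and invoke the $(G\times G\times H)$-equivariant diffeomorphism $\QD_{t}\cong M$ built into the Poisson deformation data of Definition~\ref{Def 2.5}. Under this identification $(\xi)_{\QD_{t}}$ corresponds to $\xi_{M}$, so that $\psi_{\QD}|_{\QD_{t}}=t\cdot\tfrac{1}{2}(e_{i}^{1})_{M}\wedge(e_{i}^{2})_{M}=t\,\psi_{\QD_{t}}$, and analogously $\mathrm{diag}(\phi)_{\QD}|_{\QD_{t}}=t^{2}\,\mathrm{diag}(\phi)_{\QD_{t}}$. These are precisely the rescalings compatible with the fiberwise data $(\QD_{t},tP,t^{2}\phi_{M})$, so that the internal fusion formula $\tilde{P}_{t}-\psi_{t}=tP_{M}-t\psi_{M}$ rescales uniformly in $t$. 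At $t=0$ the prefactors $\pi$ and $\pi^{2}$ kill both sections, matching the additive limit where $\phi_{0}=0$ and fusion reduces to $(\nu_{1}+\nu_{2},\nu_{3})$.

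No step here is a genuine obstacle: smoothness reduces to smoothness of the action, verticality to invariance of $\pi$, and the fiberwise identities to the equivariance of the trivializations. The only point that must be handled with care is the choice of rescaling $\pi,\pi^{2}$, which is forced by the requirement that fusion on $(\QD_{t},tP,t^{2}\phi_{M})$ agree, after identification with $M$, with fusion on $(M,P,\phi_{M})$ up to the overall $t$-scaling.
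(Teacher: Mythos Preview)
Your proposal is correct and follows essentially the same approach as the paper: both build $\psi_{\QD}$ and $\mathrm{diag}(\phi)_{\QD}$ directly from the smooth fundamental vector fields of the $G\times G$-action, multiplied by the appropriate power of the fiber coordinate. Your write-up is in fact more complete than the paper's, since you explicitly justify verticality (via $G$-invariance of $\pi$) and spell out why the $\pi^{2}$ scaling on the trivector is the correct one to match the $t^{2}\phi_{M}$ convention of Definition~\ref{Def 2.5}.
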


\begin{proof}
The infinitesimal $G\times G$--action yields smooth vector fields $(e_{i}^{1})_{\mathcal{QD}}$ and $(e_{i}^{2})_{\mathcal{QD}}$ on $\mathcal{QD}$. Setting $\psi_{\mathcal{QD}}=\frac{t}{2}\sum_{i}(e_{i}^{1})_{\mathcal{QD}}\wedge(e_{i}^{2})_{\mathcal{QD}}$ gives a smooth section vanishing at $t=0$. The argument for $\mathrm{diag}(\phi)_{\mathcal{QD}}$ is identical.
\end{proof}

\begin{Thm}\label{thm:fusion-compat}
Let $(\mathcal{QD},\tilde{P},\phi_{\mathcal{QD}},\tilde{\mu})$ be a Poisson deformation of $(M,P_{M},\phi_{M},\mu)$ to $(N,P_{0},\nu)$ for the group $G\times G\times H$. Then internal fusion yields a Poisson deformation of $(M,P_{M}-\psi_{M},\widehat{\mu},G\times H)$ to $(N,P_{0},\widehat{\nu},G\times H)$.
\end{Thm}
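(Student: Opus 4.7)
The plan is to construct the fused deformation directly from the given data: define the bivector $\tilde{P}' := \tilde{P} - \psi_{\QD}$, the trivector $\phi'_{\QD} := \mathrm{diag}(\phi)_{\QD}$, the moment map
\[
\tilde{\mu}' := \bigl(m\circ(\tilde{\mu}_1,\tilde{\mu}_2),\,\tilde{\mu}_3\bigr):\QD\longrightarrow \G\times\H,
\]
where $m$ is the smooth multiplication on $\G$ from Proposition~\ref{prop:mult-smooth}, and take as $G\times H$-action the restriction of the $(G\times G\times H)$-action to the diagonal $G\subset G\times G$ times $H$. Smoothness of $\tilde{P}'$ and $\phi'_{\QD}$ as sections of $\bigwedge^{\bullet}\ker d\pi$ is immediate from Lemma~\ref{lem 3.2}. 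Smoothness of $\tilde{\mu}'$ follows because $\pi_{\G}\circ\tilde{\mu}_1 = \pi_{\G}\circ\tilde{\mu}_2 = \pi$, so $(\tilde{\mu}_1,\tilde{\mu}_2)$ factors through $\G\times_{\R}\G$, on which $m$ is smooth. Equivariance under the diagonal $G$-action follows from the equivariance of $\tilde{\mu}_1,\tilde{\mu}_2$ together with the fact that $m$ intertwines the diagonal $\Ad$-action on $\G\times_{\R}\G$ with the $\Ad$-action on $\G$ (fiberwise, this is either $\Ad_g(ab) = \Ad_g(a)\Ad_g(b)$ for $t\neq 0$ or $\Ad_g(a+b) = \Ad_g(a)+\Ad_g(b)$ at $t=0$).

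I would then verify the fiberwise conditions. For $t\neq 0$, Lemma~\ref{lem 3.2} gives $\psi_t = t\psi_M$ and $\mathrm{diag}(\phi)_t = t^2\mathrm{diag}(\phi)_M$, so combining with $\tilde{P}_t = tP_M$ and $\phi_t = t^2\phi_M$ the restriction reads
\[
\tilde{P}'_t = t(P_M - \psi_M),\qquad \phi'_t = t^2\,\mathrm{diag}(\phi)_M,\qquad \tilde{\mu}'_t = (\mu_1\mu_2,\mu_3).
\]
The crucial point is that $[t\psi_M,t\psi_M] = t^2[\psi_M,\psi_M] = \mathrm{diag}(t^2\phi_M) - (t^2\phi_M)^1 - (t^2\phi_M)^2$, so the fusion formula \cite[Proposition~5.1]{alekseev2002quasi} applied to the rescaled Hamiltonian quasi-Poisson manifold $(M,tP_M,t^2\phi_M,\mu)$ yields exactly $(M,t(P_M-\psi_M),t^2\mathrm{diag}(\phi)_M,\widehat{\mu})$. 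Hence each fiber $t\neq 0$ is isomorphic to the internal fusion of the corresponding fiber of the original deformation, with the correct $t$-scaling. At $t=0$, $\psi_0 = 0$ and $\mathrm{diag}(\phi)_0 = 0$ by Lemma~\ref{lem 3.2}, and $m$ specializes to addition, so we recover the Hamiltonian Poisson fusion $(N,P_0,\widehat{\nu})$.

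The main obstacle is the scaling check in the second paragraph: it is precisely the choice of rescaling $(P_M,\phi_M)\mapsto(tP_M,t^2\phi_M)$ encoded in Definition~\ref{Def 2.5} that forces the rescaled $\psi$ to be $t\psi_M$, which is exactly the smooth extension provided by Lemma~\ref{lem 3.2}. Once this numerical compatibility is established, the Leibniz rule, the quasi-Jacobi identity, and the moment map axioms on each fiber follow fiberwise from the standard fusion theorem; since all deformation data live in $\ker d\pi$, no additional transverse computation is needed.
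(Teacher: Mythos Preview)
Your proposal is correct and follows essentially the same approach as the paper: both define the fused deformation data $(\tilde{P}-\psi_{\QD},\ \mathrm{diag}(\phi)_{\QD},\ (m\circ(\tilde{\mu}_1,\tilde{\mu}_2),\tilde{\mu}_3))$, invoke Lemma~\ref{lem 3.2} and Proposition~\ref{prop:mult-smooth} for smoothness, and then verify the fiberwise conditions via the scaling $\psi_t=t\psi_M$, $\mathrm{diag}(\phi)_t=t^2\mathrm{diag}(\phi)_M$. Your version is slightly more detailed on equivariance and the $t$-scaling bookkeeping, while the paper spells out the quasi-Jacobi identity directly via $[\tilde{P}_t,\psi_t]=0$ rather than citing \cite[Proposition~5.1]{alekseev2002quasi}, but these are cosmetic differences.
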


\begin{proof}
Define $\widehat{P}=\tilde{P}-\psi_{\mathcal{QD}}$ and $\widehat{\mu}=(m\circ(\tilde{\mu}_{1},\tilde{\mu}_{2}),\tilde{\mu}_{3})$, where $m$ is the smooth multiplication of Proposition~\ref{prop:mult-smooth}. By Lemma~\ref{lem 3.2}, $\widehat{P}$ is smooth with $\widehat{P}_{t}=t(P_{M}-\psi_{M})$ for $t\neq 0$.

The quasi-Jacobi identity follows from the $G\times G$--invariance of $\tilde{P}_{t}$, which implies $[\tilde{P}_{t},(\psi_{\mathcal{QD}})_{t}]=0$. Combined with $[\psi,\psi]=\mathrm{diag}(\phi)-\phi^{1}-\phi^{2}$, we obtain $[\widehat{P}_{t},\widehat{P}_{t}]=\mathrm{diag}(\phi)_{t}$. At $t=0$, we have $\widehat{P}_{0}=P_{0}$, $\mathrm{diag}(\phi)_{0}=0$, and $\widehat{\mu}_{0}=(\nu_{1}+\nu_{2},\nu_{3})$.
\end{proof}

\begin{Cor}\label{cor:fusion-product}
If $(M_{i},P_{M_{i}},\phi_{i},\mu_{i})$ admits a Poisson deformation to $(N_{i},P_{0,i},\nu_{i})$ for $i=1,2$, then the fusion product $M_{1}\circledast M_{2}$ admits a Poisson deformation to the product $N_{1}\times N_{2}$.
\end{Cor}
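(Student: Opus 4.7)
The plan is to reduce to Theorem~\ref{thm:fusion-compat} by first forming an external product of the two Poisson deformations over $[0,1]$ to obtain a $(G\times G)$--Poisson deformation, and then internally fusing the two $G$--factors.

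First, I would form the fiber product
\[
\QD := \QD_{1}\times_{\R}\QD_{2},
\]
which is a smooth manifold because the projections $\pi_{i}:\QD_{i}\to[0,1]$ are submersions. I would equip it with the common projection $\pi$, the product $G\times G$--action, the bivector $\tilde{P}:=\tilde{P}_{1}+\tilde{P}_{2}$ (extended trivially across factors), the trivector $\phi_{\QD}:=\phi_{\QD_{1}}+\phi_{\QD_{2}}$, and the moment map $\tilde{\mu}:=(\tilde{\mu}_{1},\tilde{\mu}_{2})$ landing in $\G\times_{\R}\G$, which is canonically identified with the deformation space $\D(G\times G,\{(1,1)\})$ for the product group. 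Fiberwise at $t\neq 0$ one recovers the $(G\times G)$--quasi-Poisson product $(M_{1}\times M_{2},\,t(P_{M_{1}}+P_{M_{2}}),\,t^{2}(\phi_{1}+\phi_{2}),\,(\mu_{1},\mu_{2}))$, and at $t=0$ the Hamiltonian Poisson product $(N_{1}\times N_{2},\,P_{0,1}+P_{0,2},\,(\nu_{1},\nu_{2}))$. A direct check shows this data satisfies Definition~\ref{Def 2.5} for the group $G\times G$.

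Next, I would apply Theorem~\ref{thm:fusion-compat} with $H$ trivial to this $(G\times G)$--Poisson deformation. Internal fusion of the two $G$--factors produces a $G$--Poisson deformation whose generic fibers ($t\neq 0$) are, by the very definition of external fusion, the spaces $M_{1}\circledast M_{2}$. At $t=0$, Lemma~\ref{lem 3.2} gives $\psi_{\QD}|_{t=0}=0$ and Proposition~\ref{prop:mult-smooth} degenerates the multiplication in $\G$ to addition in $\g^{*}$, so the central fiber becomes the Poisson product $(N_{1}\times N_{2},\,P_{0,1}+P_{0,2},\,\nu_{1}+\nu_{2})$, as required.

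The only substantive point to verify is that $\G\times_{\R}\G$ genuinely coincides with $\D(G\times G,\{(1,1)\})$ as a smooth manifold and that Theorem~\ref{thm:fusion-compat} applies verbatim with $G$ replaced by $G\times G$ (absorbing one factor into the auxiliary group and fusing the other two); both follow directly from the construction of $\D(G,\{1\})$ in \cite{JMM2025deformations}, so no real obstacle arises beyond careful bookkeeping.
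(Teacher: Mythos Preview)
Your proposal is correct and is precisely the argument the paper has in mind: the corollary is stated without proof immediately after Theorem~\ref{thm:fusion-compat}, and the intended derivation is exactly to form the fibered product $\QD_{1}\times_{\R}\QD_{2}$ as a $(G\times G)$--Poisson deformation of $M_{1}\times M_{2}$ to $N_{1}\times N_{2}$, then apply internal fusion. One small remark: in your last paragraph you speak of ``replacing $G$ by $G\times G$'' in Theorem~\ref{thm:fusion-compat}, but no such replacement is needed---the theorem already covers $G\times G\times H$, and here you simply take $H$ trivial and fuse the two honest $G$--factors.
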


\section{Fundamental examples} \label{sec 4}

We now apply the general theory to fundamental examples: the group $G$ itself, the double $D(G)$, conjugacy classes, and moduli spaces of flat connections.

\subsection{The group $G$} \label{G}

The group $G$, equipped with the conjugation action, bi-vector $P_{G} = \frac{1}{2}e^{R}_{i}\wedge e^{L}_{i}$, and the moment map $\mu = \mathrm{Id}$, is a Hamiltonian quasi-Poisson $G$-manifold. Its additive counterpart is the dual $\mathfrak{g}^{*}$ with the co-adjoint action, linear Poisson bi-vector
\[
P_{0} = -\frac{1}{2}(\mathrm{ad}_{x})_{jk}\frac{\partial}{\partial x_{j}}\wedge \frac{\partial}{\partial x_{k}},
\]
and moment map $\mu_0 = \mathrm{Id}$.

\begin{Thm}\label{thm:group-deformation}
The space $\mathcal{G} = \mathcal{D}(G, \{1\})$ carries a Poisson deformation from $(G,P_G,\phi_G,\mathrm{Id})$ to $(\mathfrak{g}^{*}, P_{0}, \mathrm{Id})$.
\end{Thm}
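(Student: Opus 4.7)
The plan is to take $\QD = \G$ itself, with $\tilde{\mu} = \mathrm{id}_{\G}$ and $G$ acting by conjugation on each slice $G\times\{t\}$ for $t\ne 0$ and by the adjoint representation on $\g\times\{0\}$. In the exponential chart $\Phi\colon \g\times\R\to\G$ defined by $\Phi(x,t) = (\exp(tx),t)$ for $t\ne 0$ and $\Phi(x,0) = (x,0)$, both fiberwise actions assemble into the single smooth action $(x,t)\mapsto(\Ad_g x,t)$; the equivariance of $\tilde{\mu}$ and the relation $\pi_{\G}\circ\tilde{\mu} = \pi$ are then automatic.

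The core of the proof will be the construction of smooth fiberwise sections $\tilde{P}\in\Gamma(\bigwedge^{2}\ker d\pi)$ and $\phi_{\G}\in\Gamma(\bigwedge^{3}\ker d\pi)$ satisfying $\tilde{P}_t = tP_G$, $\phi_{\G,t} = t^{2}\phi_G$ for $t\ne 0$ together with $\tilde{P}_0 = P_0$, $\phi_{\G,0} = 0$. Smoothness at $t=0$ must be verified in the chart $\Phi$. The derivative-of-exponential formula gives
\[
\Phi^{*}e_i^L = t^{-1}(\tilde{A}^{-1})_{ji}\,\partial_{x_j},\qquad
\Phi^{*}e_i^R = t^{-1}((\tilde{A}')^{-1})_{ji}\,\partial_{x_j},
\]
where $\tilde{A}(x,t) = (1-e^{-t\,\mathrm{ad}_x})/(t\,\mathrm{ad}_x)$ and $\tilde{A}'(x,t) = (e^{t\,\mathrm{ad}_x}-1)/(t\,\mathrm{ad}_x)$ are smooth in $(x,t)$ and equal to $I$ at $t=0$. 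Substituting into $tP_G = \tfrac{t}{2}e_i^R\wedge e_i^L$ yields
\[
\Phi^{*}(tP_G) = \tfrac{1}{4}t^{-1}\bigl(\tilde{Q}-\tilde{Q}^{T}\bigr)_{jk}\,\partial_{x_j}\wedge\partial_{x_k},\qquad \tilde{Q} = (\tilde{A}')^{-1}(\tilde{A}^{-1})^{T},
\]
which is \emph{a priori} singular at $t=0$. Using $\tilde{A}^{-1} = I + \tfrac{t}{2}\mathrm{ad}_x + O(t^{2})$ and $(\tilde{A}')^{-1} = I - \tfrac{t}{2}\mathrm{ad}_x + O(t^{2})$, together with the skew-adjointness $\mathrm{ad}_x^{T} = -\mathrm{ad}_x$ coming from the $\Ad$-invariance of $\langle\cdot,\cdot\rangle_{\g}$, one obtains $\tilde{Q}-\tilde{Q}^{T} = -2t\,\mathrm{ad}_x + O(t^{2})$. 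The factor $t^{-1}$ is absorbed, $\tilde{P}$ extends smoothly to $\G$, and its value at $t=0$ is $-\tfrac{1}{2}(\mathrm{ad}_x)_{jk}\partial_{x_j}\wedge\partial_{x_k} = P_0$, exactly as required.

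The same expansions show that the conjugation fundamental vector field $(e_i)_G = e_i^L - e_i^R$ has smooth chart expression $(\mathrm{ad}_x + O(t))_{ji}\partial_{x_j}$, so $\Phi^{*}\phi_G = \tfrac{1}{12}f_{ijk}(e_i)_G\wedge(e_j)_G\wedge(e_k)_G$ is already smooth in $(x,t)$; multiplying by $t^{2}$ gives a smooth $\phi_{\G}$ vanishing to second order at $t=0$. The quasi-Poisson identity, equivariance, and moment map condition required by Definition~\ref{Def 2.5} hold on each fiber by the standard Hamiltonian quasi-Poisson structure on $(G, tP_G, t^{2}\phi_G, \mathrm{id})$ for $t\ne 0$ and the Hamiltonian Poisson structure on $(\g^{*}, P_0, \mathrm{id})$ for $t=0$, so no additional fiberwise check is needed. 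I expect the main obstacle to be the smoothness verification for $\tilde{P}$ at $t=0$: the $t^{-1}$ singularity from pulling invariant vector fields on $G$ back to $\g$ must cancel exactly, and it is precisely this cancellation—supplied by the $\Ad$-invariance of the inner product—that encodes the passage from the multiplicative structure $(G, P_G)$ to the additive Kirillov--Kostant--Souriau structure $(\g^{*}, P_0)$.
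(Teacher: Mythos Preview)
Your proposal is correct and follows essentially the same approach as the paper: both take $\mathcal{QD}=\mathcal{G}$ with $\tilde{\mu}=\mathrm{id}$, pull back $tP_G$ through the exponential chart $(x,t)\mapsto(\exp(tx),t)$, and verify via Taylor expansion of the derivative of $\exp$ that the apparent $t^{-1}$ singularity cancels to yield the linear bivector $P_0$ at $t=0$. Your treatment is slightly more explicit in separating the contributions of $e_i^L$ and $e_i^R$ (the paper combines them using the identity $\eta(-\mathrm{ad}_x)=\eta(\mathrm{ad}_x)^{T}$ directly) and in handling the trivector $\phi_{\mathcal G}$, but the underlying computation and the mechanism of cancellation are identical.
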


\begin{proof}
The smooth manifold structure on $\mathcal{G}$ and the $G$-equivariance of the projection $\pi_{\mathcal{G}}: \mathcal{G} \to \mathbb{R}$ are established in \cite[Theorem~4.1]{JMM2025deformations}. It remains to verify that $\tilde{P}_{t} = tP_{G}$ and $\phi_t = t^2\phi_G$ extend smoothly across $t=0$ with the correct limits.

Using the chart $\varphi$ from \cite[Lemma~2.1]{JMM2025deformations} and the formula $e^{L}_{i} = \eta(\mathrm{ad}_{x})_{ij}\frac{\partial}{\partial x_{j}}$ where $\eta(s) = \frac{s}{1-e^{-s}}$ (see \cite{alekseev2002quasi}), for $(x,t)\in U$ we compute:
\begin{align*}
\varphi^{*}(tP_{G})(x,t) &= tP_{G}(\exp(tx))\\
&= -\frac{t}{2}\bigl(\eta(\mathrm{ad}_{tx})\bigr)^{2}_{ij}\frac{\partial}{\partial(tx_{i})} \wedge \frac{\partial}{\partial(tx_{j})}\\
&= -\frac{1}{2t}\bigl(\eta(t\,\mathrm{ad}_{x})\bigr)^{2}_{ij} \frac{\partial}{\partial x_{i}} \wedge \frac{\partial}{\partial x_{j}}\\
&= -\frac{1}{2t}\bigl(1 + t\,\mathrm{ad}_{x} + t^{2}O(\|x\|^{2})\bigr)_{ij}\frac{\partial}{\partial x_{i}} \wedge \frac{\partial}{\partial x_{j}}\\
&= -\frac{1}{2t}\bigl(t\,\mathrm{ad}_{x} + t^{2}O(\|x\|^{2})\bigr)_{ij} \frac{\partial}{\partial x_{i}}\wedge \frac{\partial}{\partial x_{j}}\\
&= -\frac{1}{2}(\mathrm{ad}_{x})_{ij}\frac{\partial}{\partial x_{i}}\wedge \frac{\partial}{\partial x_{j}} + tO(\|x\|^{2})\frac{\partial}{\partial x_{i}}\wedge \frac{\partial}{\partial x_{j}},
\end{align*}
which extends smoothly to $P_0$ when $t \too 0.$ The extension of $\phi_t$ is analogous.
\end{proof}

\subsection{Non-degenerate Hamiltonian quasi-Poisson manifolds}\label{non-degen}

For a Hamiltonian $G$-quasi-Poisson manifold $(M,P,\mu)$, the generalized distribution $\mathfrak{D}_{m} \coloneqq \mathrm{Im}(P_{m}^{\sharp}) + T_{m}(G.m)$ plays a central role.

\begin{Def}
A Hamiltonian quasi-Poisson manifold $(M,P,\mu)$ is \textbf{non-degenerate} if $\mathfrak{D}_{m} = T_{m}M$ for all $m\in M$.
\end{Def}

The following results of Alekseev--Kosmann-Schwarzbach--Meinrenken establish integrability of $\mathfrak{D}$ and equivalence with quasi-Hamiltonian geometry:

\begin{Thm}\cite[\text{Theorem 9.2}]{alekseev2002quasi}\label{thm:integrability}
The distribution $\mathfrak{D}$ is integrable: each leaf $N_{m}$ is a $G$-invariant submanifold on which $(P,\mu)$ restricts to a non-degenerate Hamiltonian quasi-Poisson structure.
\end{Thm}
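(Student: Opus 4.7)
The plan is to invoke the Stefan--Sussmann integrability theorem for generalized distributions, after exhibiting a family of vector fields which spans $\mathfrak{D}$ pointwise and is closed under the Lie bracket. The natural candidate family is
\[
\mathcal{F}=\{\,x_{M}\mid x\in\mathfrak{g}\,\}\cup\{\,P^{\sharp}(df)\mid f\in C^{\infty}(M)\,\}.
\]
At every $m\in M$, evaluating $\mathcal{F}$ reproduces $T_{m}(G.m)+\mathrm{Im}(P_{m}^{\sharp})=\mathfrak{D}_{m}$, so the core issue reduces to bracket closure of $\mathcal{F}$.

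Three bracket cases arise. First, $[x_{M},y_{M}]=[x,y]_{M}$ is again a fundamental vector field. Second, the $G$-invariance of $P$ built into the Hamiltonian quasi-Poisson structure gives $L_{x_{M}}P=0$, so $[x_{M},P^{\sharp}(df)]=P^{\sharp}(d(x_{M}f))\in\mathcal{F}$. Third, I would use the Schouten identity $[P,P]=2\phi_{M}$ together with the graded Leibniz rule to derive
\[
[P^{\sharp}(df),P^{\sharp}(dg)]=P^{\sharp}(d\{f,g\})+\iota_{dg}\iota_{df}\phi_{M},
\]
equivalently encoded by $\{f,\{g,h\}\}+\mathrm{cyclic}=\phi_{M}(df,dg,dh)$. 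The first summand lies in $\mathrm{Im}(P^{\sharp})$; the decisive observation is that $\phi_{M}$ is the image of the Cartan trivector $\phi\in\bigwedge^{3}\mathfrak{g}$ under the infinitesimal action, so any contraction $\iota_{dg}\iota_{df}\phi_{M}$ already lies in $T(G.m)\subset\mathfrak{D}$. Closure is thereby established, and Stefan--Sussmann produces integrable leaves $N_{m}$ with $T_{m}N_{m}=\mathfrak{D}_{m}$.

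The remaining structural claims then follow by construction. Each leaf is $G$-invariant because every fundamental vector field is tangent to $\mathfrak{D}$; the bivector $P$ restricts to a bivector on $N_{m}$ because $\mathrm{Im}(P^{\sharp})\subset TN_{m}$; and $\phi_{M}$ restricts because it is tangent to $G$-orbits, which sit inside leaves. The Schouten identity, the moment map equation, and the $G$-equivariance of $\mu$ all pass to the restriction since they are pointwise identities between objects already tangent to $\mathfrak{D}$. Hence $(N_{m},P|_{N_{m}},\phi_{M}|_{N_{m}},\mu|_{N_{m}})$ is a Hamiltonian quasi-Poisson $G$-manifold. Non-degeneracy is tautological: by construction $T_{m}N_{m}=\mathfrak{D}_{m}=\mathrm{Im}(P_{m}^{\sharp})+T_{m}(G.m)$.

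The principal obstacle will be the Hamiltonian--Hamiltonian bracket: the quasi-Poisson failure of Jacobi introduces an a priori uncontrolled correction to the Poisson formula $[X_{f},X_{g}]=X_{\{f,g\}}$, and the whole argument hinges on the structural fact that $\phi_{M}$ takes values in the infinitesimal action directions. A secondary technical subtlety is that $\mathfrak{D}$ typically has variable rank, which is precisely why Stefan--Sussmann, rather than the classical Frobenius theorem, is the correct integrability tool.
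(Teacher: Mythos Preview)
The paper does not supply its own proof of this statement; it is quoted as background from \cite[Theorem~9.2]{alekseev2002quasi} and used only to motivate the non-degenerate case. Your argument is correct and is essentially the proof given in that original reference: one verifies bracket closure of the spanning family $\{x_{M}\}\cup\{P^{\sharp}(df)\}$---the only nontrivial case being the Hamiltonian--Hamiltonian bracket, where the anomaly $\iota_{dg}\iota_{df}\phi_{M}$ lands in the orbit directions because $\phi_{M}$ is assembled from fundamental vector fields---and then invokes Stefan--Sussmann for the resulting singular distribution.
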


\begin{Thm}\cite[\text{Theorem 9.3}]{alekseev2002quasi} \label{equivalence}
Non-degenerate Hamiltonian quasi-Poisson manifolds $(M,P_{M},\phi_{M},\mu)$ correspond bijectively to quasi-Hamiltonian manifolds $(M,\omega,\mu)$ via the relation
\begin{equation} \label{eq 1}
P_{M}^{\sharp}\circ \omega^{\flat} = \mathrm{Id}_{TM} - \tfrac{1}{4}e_{i}\otimes \mu^{*}(\theta^{L}_{i} - \theta^{R}_{i}).
\end{equation}
\end{Thm}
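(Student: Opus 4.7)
The plan is to use (\ref{eq 1}) as the defining relation that determines $\omega$ from $(P_M, \mu)$ and vice versa, and then to translate the quasi-Poisson axioms into the quasi-Hamiltonian axioms directly.

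Setting $A \coloneqq \mathrm{Id}_{TM} - \tfrac{1}{4} e_i \otimes \mu^*(\theta_i^L - \theta_i^R) \in \mathrm{End}(TM)$, the first step is to show $\mathrm{Im}(A_m) \subseteq \mathrm{Im}(P_m^\sharp)$ at every $m \in M$. This uses the quasi-Poisson moment map condition, which expresses each fundamental vector field $(e_i)_M$ in terms of $P^\sharp(\mu^* \theta_i^L)$ and $P^\sharp(\mu^* \theta_i^R)$. Under the non-degeneracy assumption $\mathfrak{D}_m = T_m M$, this containment together with the requirement that $\omega$ be antisymmetric yields a unique $\omega \in \Omega^2(M)$ satisfying (\ref{eq 1}).

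I would next translate the axioms. The quasi-Hamiltonian moment map condition $\iota_{x_M}\omega = \tfrac{1}{2}\mu^*\langle \theta^L + \theta^R, x\rangle_{\g}$ follows by contracting (\ref{eq 1}) with $x_M$ and applying the quasi-Poisson moment condition for $P$. The kernel characterization $\ker \omega^\flat_m = \{x_M(m) : x \in \ker(\Ad_{\mu(m)} + 1)\}$ is obtained by inspecting $\ker A_m$ directly: the Maurer-Cartan identity $\theta^R_g = \Ad_{g^{-1}}\theta^L_g$ turns $A_m(v) = 0$ into a condition involving $\Ad_{\mu(m)} + 1$ acting on $d\mu(v)$.

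The central step, and the main obstacle, is the equivalence
\[
[P_M, P_M] = \phi_M \quad\Longleftrightarrow\quad d\omega = -\mu^* \chi,
\]
where $\chi = \tfrac{1}{12}\langle \theta^L, [\theta^L, \theta^L]\rangle_{\g}$ is the Cartan 3-form. Differentiating (\ref{eq 1}) and applying the Maurer-Cartan structure equations for $\theta^L$ and $\theta^R$ expresses $d\omega$ as a combination involving $P_M$, $A$, and the structure constants $f_{ijk}$; a careful term-by-term comparison with $\mu^*\chi$ then matches the coefficient $\tfrac{1}{12}$ on both sides. This verification is routine in principle but delicate: one must track the antisymmetrizations built into the Schouten bracket $[P_M, P_M]$ and the signs arising from left- versus right-invariant forms.

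For the converse, given a quasi-Hamiltonian $(M, \omega, \mu)$, the same relation---read as $P^\sharp = A \circ (\omega^\flat)^{-1}$ on an appropriate complement of $\ker \omega^\flat$---defines a bivector $P_M$. Antisymmetry of $P^\sharp$ follows from antisymmetry of $\omega^\flat$ combined with the finite-rank structure of the correction term in $A$. The quasi-Poisson axioms are verified by running the translation in reverse, and the two constructions are mutually inverse by construction of $A$.
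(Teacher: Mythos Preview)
The paper does not prove this statement; it is quoted verbatim from \cite[Theorem~9.3]{alekseev2002quasi} and used as a black box to transfer the deformation results of \cite{JMM2025deformations} to the quasi-Poisson setting. There is therefore no proof in the present paper to compare your proposal against. Your sketch is broadly in the spirit of the original argument in \cite{alekseev2002quasi}, though a couple of points would need tightening if you actually carried it out: the inclusion $\mathrm{Im}(A_m)\subseteq\mathrm{Im}(P_m^\sharp)$ is not automatic and is not quite how the construction runs (one rather uses that $A$ differs from the identity by a term landing in the orbit directions, together with $\mathfrak{D}_m=T_mM$, to build $\omega$ simultaneously on $\mathrm{Im}(P^\sharp)$ and on the orbit tangent space via the moment condition); and the converse direction cannot literally use $(\omega^\flat)^{-1}$ since $\omega$ is degenerate, so one must invert on a complement of $\ker\omega^\flat$ and check consistency, which you acknowledge but do not fully resolve.
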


Theorem~\ref{equivalence} allows us to transfer deformation results from \cite{JMM2025deformations} to the quasi-Poisson setting. The double $D(G) = G\times G$, conjugacy classes $C \subset G$, and moduli spaces of flat $G$-connections are non-degenerate \cite{alekseev2002quasi}. Applying \cite[Theorems~4.1, 4.2, Corollary~4.3]{JMM2025deformations}, we obtain:

\begin{Cor}\label{cor:main-examples}
The following Poisson deformations exist:
\begin{enumerate}
\item The double $D(G)$ deforms to the cotangent bundle $T^{*}G$, realized by the space $G\times \mathcal{G}$.
\item The conjugacy class $C_{e^{x}}$ deforms to the coadjoint orbit $\mathcal{O}_x$, realized by $\mathcal{C}\coloneqq \left(\bigcup_{t\neq 0} C_{e^{tx}}\times \{t\}\right) \cup \left(\mathcal{O}_{x}\times \{0\}\right)$.
\item For a compact oriented surface $\Sigma$ of genus $g$ with $r+1$ boundary components, the moduli space $\mathcal{M}(\Sigma)$ of flat $G$-connections deforms to $T^{*}G^{g+r}$.
\end{enumerate}
\end{Cor}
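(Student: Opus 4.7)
The strategy is to lift the quasi-Hamiltonian deformation results of \cite{JMM2025deformations} to the quasi-Poisson setting via the equivalence of Theorem \ref{equivalence}. The three target spaces $D(G)$, $C_{e^{x}}$, and $\M(\Sigma)$ are non-degenerate Hamiltonian quasi-Poisson manifolds, and their additive counterparts $T^{*}G$, $\O_{x}$, $(T^{*}G)^{g+r}$ are symplectic, hence non-degenerate Poisson. Consequently, once one has a smooth quasi-Hamiltonian deformation on the total space, the equivalence \eqref{eq 1} will produce the corresponding quasi-Poisson bivector $\tilde{P}$ fiberwise, and the only remaining task is to check smoothness across the singular fiber $t=0$.

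For item (1), I would take the total space $G \times \G$ of \cite[Theorem 4.1]{JMM2025deformations}, which already carries a smooth quasi-Hamiltonian deformation $(\tilde{\omega}_{t}, \tilde{\mu}_{t})$ from $D(G)$ to $T^{*}G$. Equation \eqref{eq 1} then determines $\tilde{P}_{t}$ fiberwise through the algebraic relation
\[
\tilde{P}_{t}^{\sharp} \circ \tilde{\omega}_{t}^{\flat} = \mathrm{Id} - K_{t}, \qquad K_{t} = \tfrac{1}{4} e_{i} \otimes \tilde{\mu}_{t}^{*}(\theta^{L}_{i} - \theta^{R}_{i}).
\]
Since the Maurer--Cartan forms extend smoothly to $\G$ by the construction of \cite[Section 2]{JMM2025deformations}, the correction $K$ is smooth on the total space; and since $K$ vanishes at $t=0$ (in the chart of \cite[Lemma 2.1]{JMM2025deformations}, $\tilde{\mu}^{*}\theta^{L}_{i}$ is $O(t)$), the operator $\mathrm{Id} - K$ is invertible in a neighborhood of $\pi^{-1}(0)$, yielding a smooth family $\tilde{P}$ that reduces to the canonical Poisson bivector on $T^{*}G$ at $t=0$. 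The trivector $\phi_{\QD}$ is constructed as in Lemma \ref{lem 3.2} and vanishes at $t=0$. Item (2) is handled identically, replacing $G \times \G$ by the space $\C$ of \cite[Theorem 4.2]{JMM2025deformations} and $T^{*}G$ by $\O_{x}$.

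For item (3) there are two natural routes. One can apply the equivalence \eqref{eq 1} directly to the quasi-Hamiltonian deformation of $\M(\Sigma)$ supplied by \cite[Corollary 4.3]{JMM2025deformations}. Alternatively, since the master moduli space decomposes as $\M(\Sigma) \cong D(G)^{\circledast(g+r)}$, iterating Corollary \ref{cor:fusion-product} starting from item (1) yields a Poisson deformation of $\M(\Sigma)$ to $(T^{*}G)^{g+r}$, using that the Poisson fusion of $T^{*}G$ with itself is the direct product of Hamiltonian Poisson structures. Both approaches give the same deformation up to isomorphism.

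The main obstacle is precisely the smoothness of $\tilde{P}$ across $t=0$. Non-degenerate quasi-Hamiltonian 2-forms typically carry a non-trivial kernel, determined pointwise by the fixed subspace of $\Ad_{\tilde{\mu}(m)}$, so the inversion implicit in \eqref{eq 1} must be performed on a complementary subspace, and one must control how this subspace varies with $t$. In the chart of \cite[Lemma 2.1]{JMM2025deformations}, the computation is structurally identical to the one carried out in Theorem \ref{thm:group-deformation}: the resulting expression for $\tilde{P}$ is polynomial in $t$ modulo smooth functions of the transverse coordinates, so extension across $t=0$ is automatic and the limiting value recovers the canonical Poisson structure on the additive side.
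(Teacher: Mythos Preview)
Your approach is the same as the paper's: invoke Theorem~\ref{equivalence} to transfer the quasi-Hamiltonian deformations of \cite[Theorems~4.1, 4.2, Corollary~4.3]{JMM2025deformations} to the Poisson setting. The paper states this in a single sentence before the corollary and provides no further argument; you go further and attempt to justify the smoothness of $\tilde{P}$ across $t=0$, which is really the content of the Remark following Definition~\ref{Def 2.5}. Your alternative route for item~(3), iterating Corollary~\ref{cor:fusion-product} from item~(1), is also legitimate and is in the spirit of how the paper handles fusion throughout.

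One point in your second paragraph is slightly miscast. You write that invertibility of $\mathrm{Id}-K$ near $t=0$ ``yields a smooth family $\tilde{P}$'', but the relation $\tilde{P}^{\sharp}\circ\tilde{\omega}^{\flat}=\mathrm{Id}-K$ requires control of $\tilde{\omega}^{\flat}$, not of $\mathrm{Id}-K$; invertibility of $\mathrm{Id}-K$ alone does not produce $\tilde{P}$. You recognize this in your final paragraph when you discuss the kernel of $\tilde{\omega}$, but the cleaner observation is that $\tilde{\omega}_{0}$ is the genuinely non-degenerate symplectic form on the additive limit ($T^{*}G$, $\mathcal{O}_{x}$, or $(T^{*}G)^{g+r}$), so by continuity $\tilde{\omega}^{\flat}$ is invertible on an open neighborhood of $\pi^{-1}(0)$, and there $\tilde{P}^{\sharp}=(\mathrm{Id}-K)\circ(\tilde{\omega}^{\flat})^{-1}$ is manifestly smooth. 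Away from that neighborhood, $\tilde{P}_{t}=tP_{M}$ is smooth by construction, and the two descriptions agree on the overlap by uniqueness in Theorem~\ref{equivalence}. This sidesteps the need to track a moving complement to $\ker\tilde{\omega}^{\flat}$.
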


\begin{Rem}
The Poisson deformation of $G$ to $\g^*$ does not follow directly from the Hamiltonian deformation theory established in \cite{JMM2025deformations} since $G$ is a degenerate Hamiltonian quasi-Poisson manifold that foliates into conjugacy classes which are non-degenerate, and are hence quasi-Hamiltonian.
\end{Rem}
\section{Deformation theory of imploded quasi-Hamiltonian manifolds} \label{sec 5}

In this section, we present a theory of deformation of imploded quasi-Hamiltonian manifolds. For more details on symplectic implosion and imploded cross sections of quasi-Hamiltonian manifolds, we refer the readers to \cite{guillemin2002},\cite{hurtubise2006group}.

\subsection{Context}

The generalized definition of Hamiltonian $G$-spaces in Section 3 is motivated by the intrinsic singular nature of the imploded cross-section. As established in \cite[\text{Theorem 2.10}]{hurtubise2006group}, $M_{\imp}$ is not a smooth manifold but rather a stratified space decomposing into locally closed symplectic manifolds. To rigorously describe the residual Hamiltonian $T$-action on this singular object and to formulate the fundamental abelianization theorem---which asserts that $M\sll{}\lambda K \cong M_{\text{impl}}\sll{}\lambda T,$ for $\lambda \in \t^{*}_+$ as Hamiltonian spaces---requires tools that extend beyond smooth manifolds. This generalized definition captures the essential features of Hamiltonian geometry in the stratified setting: moment maps exist and are smooth on each stratum, fit together continuously across the ambient space, and the group action respects the stratification. Moreover, this framework naturally accommodates affine and projective Hamiltonian varieties (\cite[\text{Examples 3.1 and 3.3}]{hurtubise2006group}), which are themselves stratified spaces, thereby anticipating the deeper connection to algebraic geometry that emerges in \cite[\text{Section 6}]{hurtubise2006group} through the identification with $G^N$. Moreover, it is not yet clear if the implosion $M_\imp$ is a stratified symplectic space in the sense of Sjamaar-Lerman \cite{sjamaar1991stratified},i.e., the existence of a natural algebra of functions with a Poisson bracket, which motivated also Definition \ref{gen}. 

Thus, symplectic implosion inherently produces stratified Hamiltonian spaces. The essential observation is that the imploded cross-section $M_{\mathrm{imp}}$ of a quasi-Hamiltonian manifold $M$ is a disjoint union of smooth quasi-Hamiltonian spaces, called strata, and all geometric structures of interest (moment maps, group actions, bivectors, and two-forms) are smooth on each stratum individually.

Thus, a deformation of an imploded space is constructed \emph{stratum by 
stratum}: for each smooth quasi-Hamiltonian stratum $M_i \subset M_{\mathrm{imp}}$, we 
build a deformation space
\[
    \pi_i: D_i \longrightarrow [0,1]
\]
interpolating between its multiplicative quasi-Hamiltonian structure and the corresponding additive Hamiltonian structure on the limiting stratum $N_i$. No global smoothness across strata is required; only continuity of the moment map and compatibility of the group action across strata are needed.

\begin{Def} (\cite[\text{Section 3}]{guillemin2002}) \label{gen}
\defn{A generalized} $G$-\defn{Hamiltonian space} is topological space $\Big(M = \bigsqcup_{i\in I} (M_i,\sigma_i),\mu \Big)$ together with a continuous action of a Lie group $G$ that preserves the decomposition above and a \defn{continuous moment map} $\mu: M\too G$ such that  $(M_{i},\mu_{|M_{i}},\sigma_i,G)$ is a $G$-Hamiltonian smooth connected manifold in the usual sense, for all $i$ in $I.$ \defn{An isomorphism} from $M$ to another $G$-Hamiltonian space $\Big(L = \bigsqcup_{j\in J} (L_j,\eta_j),\phi \Big)$ is a pair $(F,f),$ where $F: M\too L$ is an homeomorphism and $f:I\too J$ is a bijection, subject to the following conditions: $F$ is $G$-equivariant, $\mu = \phi\circ F,$ and F maps $M_i$ symplectomorphically onto $L_{f(i)}$ for all $i\in I.$ 
\end{Def}

\begin{Rem}
The notion of a \defn{generalized quasi-Hamiltonian space} $(N,\nu,G)$ is defined similarly. Here, each $(N_{j},\nu_{|N_{j}},G)$ is a smooth connected $G$ quasi-Hamiltonian manifold in the usual sense.
\end{Rem}

\begin{Exp}
For any $G\times H$-quasi-Hamiltonian manifold $M$ and $T \subset H$ a maximal torus, the imploded cross section of $M$ denoted by $M_\imp$ with respect to the right action is a $G\times T$ generalized Hamiltonian manifold (\cite[\text{Definition 3.2, Addendum 3.18, Remark 3.5}]{hurtubise2006group}). Similarly, the cross imploded section of a  $G\times H$-Hamiltonian manifold gives a generalized $G\times T$-Hamiltonian manifold (\cite[\text{section 4}]{guillemin2002}). 
\end{Exp}

\subsection{Partial fusion product}

Let $M_{i} = \bigsqcup_{j_{i}\in J_{i}} M_{i,j_{i}}$ be a generalized $G\times G\times H$-quasi-Hamiltonian manifold for $i \in \{1,2\}$, and suppose there exists an injection $f: J_{1} \to J_{2}$.

\begin{Def}
Let $M = \bigsqcup_{i\in I} M_i$ be a $G\times G\times H$-quasi-Hamiltonian manifold. The \emph{internal fusion} of $M$ is the $G\times H$-quasi-Hamiltonian manifold
\[
\widetilde{M} = \bigsqcup_{i\in I} \bigl(M_i, ((\mu_1\cdot \mu_2)_{|M_i}, \mu_3|_{M_i}), \tilde{\omega}_i, G\times H\bigr),
\]
where $\tilde{\omega}_i = \omega_i + \frac{1}{2}\langle (\mu_1)_{|M_i}^*\theta^L \wedge (\mu_2)_{|M_i}^*\theta^R \rangle$; see \cite[Section~3]{alekseev2002quasi}.
\end{Def}

\begin{Def}
The \emph{partial fusion product} $M_{1}\otimes M_{2}$ is the $G\times H$-quasi-Hamiltonian manifold
\[
M_{1}\otimes M_{2} = \bigsqcup_{j_{1}\in J_{1}} M_{1,j_{1}}\circledast M_{2,f(j_{1})},
\]
where $M_{1,j_{1}}\circledast M_{2,f(j_{1})}$ denotes the usual fusion product \cite[\text{Section 3}]{alekseev1998}.
\end{Def}

Similarly, for generalized $G$-Hamiltonian manifolds $N_{i} = \bigsqcup_{l_{i}\in L_{i}} N_{i,l_{i}}$ with $i \in \{1,2\}$, and an injection $h: L_{1} \to L_{2}$, we define the \emph{partial fusion product}
\[
N_{1}\times N_{2} \coloneqq \bigsqcup_{l_{1}\in L_{1}} N_{1,l_{1}}\times N_{2,h(l_{1})}.
\]

In the rest of this section, unless otherwise stated, we consider quasi-Hamiltonian and Hamiltonian manifolds in the sense of Definition~\ref{gen}.

When $M$ and $N$ are smooths $G\times H$-manifolds, a Hamiltonian deformation from $N$ to $M$ consists of a smooth $G\times H$-invariant manifold $\mathcal{D}$ together with a smooth submersion $\pi: \mathcal{D} \to \mathbb{R}$, a smooth section $\hat{\omega}$ of $\bigwedge^{2}(\ker d\pi)^*$, and a $G\times H$-equivariant smooth map $\hat{\mu}: \mathcal{D} \to \mathcal{G} \times \mathcal{H}$ satisfying the conditions of \cite[Theorem~2.5]{JMM2025deformations}. For imploded manifolds, however, $\mathcal{D}$ is not smooth but rather a topological space with a continuous moment map $\hat{\mu}: \mathcal{D} \to \mathcal{D}(G\times T, \{1\})$. More precisely,

\begin{Def}\label{gen H}
A \emph{generalized Hamiltonian deformation} of a $G\times H$-quasi-Hamiltonian manifold $M$ to a $G\times H$-Hamiltonian manifold $N$ is a topological space $\bar{\mathcal{D}}$ equipped with:
\begin{enumerate}[label={\textup{(\roman*)}}]
\item\label{d5uauarr} a continuous $G\times H$-action on $\bar{\mathcal{D}}$,
\item a $G\times H$-invariant continuous projection $\bar{\pi}: \bar{\mathcal{D}} \to \mathbb{R}$ with $[0,1] \subset \mathrm{im}\,\bar{\pi}$,
\item a $G\times H$-equivariant continuous map $\bar{\mu}: \bar{\mathcal{D}} \to \mathcal{G} \times \mathcal{H}$ satisfying $\pi_{\mathcal{G} \times \mathcal{H}} \circ \bar{\mu} = \bar{\pi}$, where $\pi_{\mathcal{G} \times \mathcal{H}}: \mathcal{G} \times \mathcal{H} \to \mathbb{R}$ is the canonical projection.
\end{enumerate}
These data are required to satisfy the following conditions. For each $t \in \mathbb{R}$, let $\bar{\mathcal{D}}_t \coloneqq \bar{\pi}^{-1}(t)$ denote the fiber over $t$, let $\bar{\mu}_t \coloneqq \bar{\mu}|_{\bar{\mathcal{D}}_t}$, and let the $G\times H$-action on $\bar{\mathcal{D}}_t$ be that induced from~\ref{d5uauarr}. Then:
\begin{itemize}
\item $(\bar{\mathcal{D}}_t, \bar{\mu}_t, G\times H)$ is a quasi-Hamiltonian space for $t \neq 0$ and a Hamiltonian space for $t = 0$;
\item $\bar{\mathcal{D}}_1 \cong M$ as $G\times H$-quasi-Hamiltonian spaces;
\item $\bar{\mathcal{D}}_0 \cong N$ as $G\times H$-Hamiltonian spaces;
\item each smooth $G\times H$-quasi-Hamiltonian stratum $X_i$ of $M$ deforms to the corresponding $G\times H$-Hamiltonian stratum $Y_{j_i}$ of $N$ in the sense of \cite[Definition~2.5]{JMM2025deformations}.
\end{itemize}
\end{Def}

\subsection{Deformation of partial fusion products}

Let $M_{i} = \bigsqcup_{j_{i}\in J_{i}} M_{i,j_{i}}$ be a $G\times H_i$-quasi-Hamiltonian manifold and $N_{i} = \bigsqcup_{l_{i}\in L_{i}} N_{i,l_{i}}$ a $G\times H_i$-Hamiltonian manifold for $i \in \{1,2\}$. Suppose there exist injections $f: J_{1} \to J_{2}$ and $h: L_{1} \to L_{2}$.

\begin{Thm}\label{thm 6.5}
Suppose there exists a $G\times H_i$-generalized Hamiltonian deformation from $M_i$ to $N_i$ for $i \in \{1,2\}$. Then there exists a $G\times H_1 \times H_2$-generalized Hamiltonian deformation from the partial fusion product $M_1 \otimes M_2$ to $N_1 \times N_2$.
\end{Thm}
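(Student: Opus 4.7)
The plan is to reduce to the smooth (non-stratified) fusion-compatibility result of \cite{JMM2025deformations}, applied stratum-by-stratum, and then to assemble the smooth pieces into a topological space modeled on a restricted fiber product of $\bar{\mathcal{D}}^{(1)}$ and $\bar{\mathcal{D}}^{(2)}$. Each generalized Hamiltonian deformation $\bar{\mathcal{D}}^{(i)}$ of $M_i$ to $N_i$ induces a bijection $\sigma_i:J_i\to L_i$ between strata (condition (iv) of Definition \ref{gen H}). First I would arrange, or assume as part of the data, the compatibility $h=\sigma_2\circ f\circ\sigma_1^{-1}$; with this, the stratum $M_{1,j_1}\circledast M_{2,f(j_1)}$ of $M_1\otimes M_2$ is matched with $N_{1,\sigma_1(j_1)}\times N_{2,\sigma_2(f(j_1))}$ of $N_1\times N_2$.

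For each matched pair, the smooth Hamiltonian stratum deformations $\mathcal{D}^{(1)}_{j_1}$ and $\mathcal{D}^{(2)}_{f(j_1)}$ sitting inside $\bar{\mathcal{D}}^{(1)}$ and $\bar{\mathcal{D}}^{(2)}$ satisfy the hypotheses of the smooth fusion-compatibility theorem (the quasi-Hamiltonian analogue of Corollary \ref{cor:fusion-product}). This yields a smooth Hamiltonian deformation $\mathcal{D}_{j_1}$, built on the fiber product $\mathcal{D}^{(1)}_{j_1}\times_{\mathbb{R}}\mathcal{D}^{(2)}_{f(j_1)}$, from $M_{1,j_1}\circledast M_{2,f(j_1)}$ to $N_{1,\sigma_1(j_1)}\times N_{2,\sigma_2(f(j_1))}$. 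I would then set
\[
\bar{\mathcal{D}} := \bigsqcup_{j_1\in J_1} \mathcal{D}_{j_1},
\]
topologized as the subspace of $\bar{\mathcal{D}}^{(1)}\times_{\mathbb{R}}\bar{\mathcal{D}}^{(2)}$ cut out by the matching $(j_1,f(j_1))$, and equip it with the $G\times H_1\times H_2$-action (diagonal on $G$, product on $H_1\times H_2$) inherited from the factors, the projection $\bar\pi$ pulled back from either factor, and the moment map
\[
\bar\mu := \bigl(m_{\mathcal{G}}\circ(\bar\mu^{(1)}_G,\bar\mu^{(2)}_G),\ \bar\mu^{(1)}_{H_1},\ \bar\mu^{(2)}_{H_2}\bigr),
\]
where $m_{\mathcal{G}}$ is the smooth multiplication of Proposition \ref{prop:mult-smooth}.

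The main obstacle is the global continuity of $\bar\mu$ on the (possibly singular) stratified space $\bar{\mathcal{D}}$; stratified gluing can break continuity even when every piece is smooth. Stratum-wise smoothness is automatic from the output of the smooth fusion-compatibility and of $m_{\mathcal{G}}$. Continuity across strata reduces to the continuity of each $\bar\mu^{(i)}$ on the entirety of $\bar{\mathcal{D}}^{(i)}$ (Definition \ref{gen H}(iii)) composed with the globally smooth map $m_{\mathcal{G}}$: this composition is continuous on $\bar{\mathcal{D}}^{(1)}\times_{\mathbb{R}}\bar{\mathcal{D}}^{(2)}$, hence continuous on the subspace $\bar{\mathcal{D}}$. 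The remaining items of Definition \ref{gen H}---equivariance of $\bar\mu$, invariance of $\bar\pi$, the identifications $\bar{\mathcal{D}}_1\cong M_1\otimes M_2$ and $\bar{\mathcal{D}}_0\cong N_1\times N_2$, and the stratum-wise smooth deformation condition---reduce to the corresponding assertions of the smooth fusion-compatibility and are verified stratum-by-stratum.
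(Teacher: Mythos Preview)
Your proposal is correct and follows essentially the same strategy as the paper: use the smooth multiplication $m_{\mathcal{G}}$ on $\mathcal{G}$ to define the fused moment map, and invoke the smooth fusion-compatibility result of \cite{JMM2025deformations} stratum-by-stratum. The paper compresses your argument by first reducing to the internal-fusion case (a single $G\times G\times H$-space $\bar{\mathcal D}$) rather than forming the fiber product $\bar{\mathcal D}^{(1)}\times_{\mathbb R}\bar{\mathcal D}^{(2)}$ explicitly, but this is a cosmetic difference: your fiber product is exactly the generalized deformation of the product $M_1\times M_2$ as a $G\times G\times H_1\times H_2$-space, after which both you and the paper apply internal fusion via $m_{\mathcal G}$. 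Your discussion of the compatibility $h=\sigma_2\circ f\circ\sigma_1^{-1}$ and the global continuity of $\bar\mu$ across strata is more explicit than the paper's one-line treatment, but adds no new idea.
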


\begin{proof}
It suffices to treat the internal fusion case. Suppose $(\bar{\mathcal{D}}, \bar{\mu}, \bar{\pi}, G\times G\times H)$ is a deformation from $(M, \mu, G\times G\times H)$ to $(N, \nu, G\times G\times H)$. Keeping the same space $\bar{\mathcal{D}}$, we obtain the desired result by considering the map $(m \circ (\bar{\mu}_1, \bar{\mu}_2), \bar{\mu}_3): \bar{\mathcal{D}} \to G \times H$ and applying \cite[Theorem~3.1]{JMM2025deformations} to each $G\times G\times H$-quasi-Hamiltonian smooth stratum $M_i$.
\end{proof}

The following lemma is useful when studying examples of generalized deformations.

\begin{Lem}\label{lem 6.4}
Let $\mathfrak{t}$ be the Lie algebra of a maximal torus $T$. The map
\[
\phi_{\mathfrak{t}}: \mathfrak{t} \times \mathbb{R} \to \mathcal{T}, \quad (x,t) \mapsto \begin{cases} (\exp(tx), t) & t \neq 0, \\ (x, 0) & t = 0, \end{cases}
\]
is smooth. Moreover, $\phi_{\mathfrak{t}}$ restricts to a diffeomorphism on an open set $U_{\mathfrak{t}}$ containing $\mathfrak{t} \times \{0\}$ and $\{0\} \times \mathbb{R}$.
\end{Lem}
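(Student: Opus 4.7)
The strategy is to recognize $\phi_\t$ as essentially the chart map that defines the smooth structure on $\mathcal{T}=\mathcal{D}(T,\{1\})$ near the zero fiber, so that smoothness reduces to a tautology in the relevant chart, and then to exhibit $U_\t$ explicitly by exploiting the injectivity radius of the covering map $\exp\colon\t\to T$.

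For smoothness, on $\{t\neq 0\}$ the formula $(x,t)\mapsto(\exp(tx),t)$ is visibly smooth as a map into $T\times\R^{*}$. Near $\t\times\{0\}$, I would invoke the analogue for $T$ of the chart constructed in \cite[Lemma~2.1]{JMM2025deformations}: the smooth structure on $\mathcal{D}(T,\{1\})$ in a neighborhood of the zero fiber is defined precisely so that $(y,t)\mapsto(\exp(ty),t)$ for $t\neq 0$ and $(y,0)\mapsto(y,0)$ is a smooth chart. Read in that chart, $\phi_\t$ is the identity, so smoothness at every point of $\t\times\{0\}$ is automatic.

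For the diffeomorphism, let $\rho>0$ be the length of the shortest nonzero vector in the integral lattice $\Lambda=\ker(\exp|_{\t})$, and set
\[
U_\t \coloneqq \{(x,t)\in\t\times\R : \|tx\|<\rho/2\}.
\]
This open set visibly contains both $\t\times\{0\}$ and $\{0\}\times\R$. Injectivity is the crux: if $\phi_\t(x,t)=\phi_\t(x',t')$ then $t=t'$; when $t=0$ one reads $x=x'$ directly, while when $t\neq 0$ the equality $\exp(tx)=\exp(tx')$ forces $t(x-x')\in\Lambda$, and the triangle inequality gives $\|t(x-x')\|<\rho$, so $t(x-x')=0$ and hence $x=x'$. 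The local diffeomorphism property near $(x_0,0)$ is immediate from the chart identification, while near $(x_0,t_0)$ with $t_0\neq 0$ the Jacobian of $(x,t)\mapsto(\exp(tx),t)$ is block triangular with diagonal blocks $t_0(d\exp)_{t_0 x_0}$ and $1$; since $T$ is abelian, $d\exp$ is everywhere invertible on $\t$, so this Jacobian is invertible. Combining injectivity with the local diffeomorphism property yields the desired diffeomorphism $\phi_\t|_{U_\t}$ onto an open subset of $\mathcal{T}$. The main subtlety is this diffeomorphism statement rather than the smoothness: one cannot simply cite that $\exp\colon\t\to T$ is a diffeomorphism, since globally it is only a covering map, and the quantitative bound $\|tx\|<\rho/2$ is engineered precisely so that the triangle inequality excludes the lattice obstruction while still keeping both coordinate axes inside $U_\t$.
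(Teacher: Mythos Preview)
Your proposal is correct and, at bottom, aligns with the paper's own approach: the paper's proof is the single line ``See \cite[Lemma~2.1]{JMM2025deformations},'' and you likewise invoke that chart to handle smoothness near the zero fiber. Where you go further is in spelling out an explicit $U_\t$ via the injectivity radius $\rho$ of the lattice $\Lambda=\ker(\exp|_{\t})$ and checking injectivity by the triangle-inequality argument $\|t(x-x')\|<\rho$; the paper leaves all of this packaged inside the citation. Your torus-specific construction is a mild bonus---it gives a concrete $U_\t$ rather than an existential one---and exploits that $d\exp$ is globally invertible for abelian $T$, which is exactly why the open set can be taken to contain all of $\{0\}\times\R$ without further restriction on $t$.
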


\begin{proof}
See \cite[Lemma~2.1]{JMM2025deformations}.
\end{proof}

\subsection{Deformation of reductions}

Let $(M, (\mu_G, \mu_H), \omega, G\times H)$ be a $G\times H$-quasi-Hamiltonian smooth manifold with $H$ non-abelian. Let $f \in H$ be a regular value and $H_f$ its stabilizer, acting locally freely on $\mu_H^{-1}(f)$. For a subgroup $K \subseteq H_f$, the \emph{stratum of type $K$} in $M$ is the $H_f$-invariant submanifold
\[
M_K = \{a \in M : H_f \cap H_a \text{ is conjugate to } K\}.
\]

Set $Z = \mu_H^{-1}(f)$ and $Z_{(K)} = Z \cap M_K$. Let $\{Z_i : i \in I\}$ be the connected components of all $Z_{(K)}$, where $(K)$ ranges over conjugacy classes of subgroups of $H_f$. Then the reduced space $M /\!/_f H_f \coloneqq \mu_H^{-1}(f)/H_f$ decomposes as
\begin{equation}\label{loc. dec}
M /\!/_f H_f = \bigsqcup_{i \in I} Z_i / H_f.
\end{equation}
By \cite[Theorem~2.9]{hurtubise2006group}, the restriction of $\omega$ to each $Z_i$ descends to a two-form $\omega_i$ on $Z_i/H_f$, making it a $G$-quasi-Hamiltonian smooth manifold with moment map given by restricting $\tilde{\mu}_G: M /\!/_f H_f \to G$. Thus $M /\!/_f H_f$ is a $G$-quasi-Hamiltonian manifold in the sense of Definition~\ref{gen}.\\

Now suppose $(M = \bigsqcup_{i \in I} M_i, (\mu_G, \mu_H), G\times H)$ is a $G\times H$-quasi-Hamiltonian manifold. Let $f \in H$ be a regular value of $(\mu_H)|_{M_i}$ for each $i$, with stabilizer $H_f$. Define
\[
M_f \coloneqq \mu_H^{-1}(f)/H_f = \bigsqcup_{i \in I} (\mu_H)|_{M_i}^{-1}(f)/H_f.
\]
The restriction of $\mu_G$ to $\mu_H^{-1}(f)$ descends to a continuous map $\tilde{\mu}_G: M_f \to G$. Since $H_f$ acts locally freely on each $(\mu_H)|_{M_i}^{-1}(f)$, each quotient $M_{i,f} = (\mu_H)|_{M_i}^{-1}(f)/H_f$ is a stratified $G$-quasi-Hamiltonian manifold. Using the orbit-type decomposition $M_{i,f} = \bigsqcup_{j_i \in J_i} X^f_{j_i}/H_f$, where each stratum is a $G$-quasi-Hamiltonian smooth manifold, we obtain
\[
M_f = \bigsqcup_{i \in I,\, j_i \in J_i} X^f_{j_i}/H_f.
\]
Hence $(M_f, \tilde{\mu}_G, G)$ is a $G$-quasi-Hamiltonian manifold in the sense of Definition~\ref{gen}.

Similarly, if $(N, \nu_G, \nu_H, G\times H)$ is a Hamiltonian manifold and $y \in \mathfrak{h}$ has stabilizer $H_y$, then $N_y \coloneqq \nu_H^{-1}(y)/H_y$ is a $G$-Hamiltonian manifold in the sense of Definition~\ref{gen}.

\begin{Thm}\label{reduction}
Let $(\bar{\mathcal{D}}, \bar{\pi}, \bar{\mu}, G\times H)$ be a generalized Hamiltonian deformation from $(M = \bigsqcup_{i \in I} M_i, (\mu_G, \mu_H), G\times H)$ to $(N = \bigsqcup_{l \in L} N_l, (\nu_G, \nu_H), G\times H)$. Let $y \in \mathfrak{h}$ and let $I \subset \mathbb{R}$ be a neighborhood of $[0,1]$. Suppose that:
\begin{enumerate}[label={\textup{(\roman*)}}]
\item\label{jkmndrxc} $d\exp_{ty}$ is invertible for all $t \in I$;
\item\label{pnn277gb} $H_y$ acts locally freely on $(\nu_H)|_{N_l}^{-1}(y)$ and $(\mu_H)|_{M_i}^{-1}(e^{ty})$ for all $t \in I \setminus \{0\}$;
\item\label{i4svqrm1} the curve $\phi_y: I \to \mathcal{H}$, $t \mapsto \phi(y, t)$ lifts to a curve $\widehat{\phi}_y: I \to \bar{\mathcal{D}}$ such that $\bar{\mu}_2 \circ \widehat{\phi}_y = \phi_y$.
\end{enumerate}
Set $\mathcal{X} \coloneqq \phi(\{y\} \times I) = \{(e^{ty}, t) : t \in I \setminus \{0\}\} \sqcup \{(y, 0)\}$. Then $\bar{\mu}_2^{-1}(\mathcal{X})/H_y$ carries the structure of a generalized Hamiltonian deformation from $M_{e^y}$ to $N_y$.
\end{Thm}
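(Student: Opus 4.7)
The plan is to construct the generalized Hamiltonian deformation on $\bar{\mathcal{D}}' := \bar{\mu}_2^{-1}(\mathcal{X})/H_y$ by transferring the fiberwise Hamiltonian reduction along the curve $\phi_y$, and to check the smoothness requirements stratum-by-stratum using the reduction theorem from the smooth theory in \cite{JMM2025deformations}.

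First, I would set up the ambient continuous structure. Since $\bar{\pi}$ is $G\times H$-invariant and $\mathcal{X}\subset \mathcal{H}$ sits over $I$, the preimage $\bar{\mu}_2^{-1}(\mathcal{X})$ is $H_y$-invariant and $\bar{\pi}$ descends to a continuous projection $\bar{\pi}':\bar{\mathcal{D}}'\to\mathbb{R}$ with $[0,1]\subset \mathrm{im}\,\bar{\pi}'$. The first component $\bar{\mu}_1:\bar{\mathcal{D}}\to\mathcal{G}$ is $H$-invariant and $G$-equivariant, so it descends to a continuous $G$-equivariant map $\bar{\mu}':\bar{\mathcal{D}}'\to\mathcal{G}$ with $\pi_{\mathcal{G}}\circ\bar{\mu}'=\bar{\pi}'$, and the quotient $G$-action is continuous. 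Fiberwise, for $t\in I\setminus\{0\}$ the isomorphism $\bar{\mathcal{D}}_t\cong M$ identifies the fiber of $\bar{\mu}_2$ over $(e^{ty},t)$ with $(\mu_H)^{-1}(e^{ty})$, and hypothesis (ii) together with the paragraph preceding the theorem yields $\bar{\mathcal{D}}'_t\cong M_{e^{ty}}$ as a generalized $G$-quasi-Hamiltonian space; similarly $\bar{\mathcal{D}}'_0\cong N_y$. In particular $\bar{\mathcal{D}}'_1\cong M_{e^y}$ and $\bar{\mathcal{D}}'_0\cong N_y$.

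The main step is verifying the stratum-by-stratum smooth deformation condition of Definition~\ref{gen H}. For each smooth quasi-Hamiltonian stratum $M_i\subset M$ paired under $\bar{\mathcal{D}}$ with a smooth Hamiltonian stratum $N_{l_i}\subset N$, the corresponding locus in $\bar{\mathcal{D}}$ is a smooth Hamiltonian deformation in the sense of \cite[Definition~2.5]{JMM2025deformations}. Using the lift $\widehat{\phi}_y$ from (iii) as the moving reduction locus inside $\bar{\mathcal{D}}$, condition (i) to guarantee that $\phi_y$ is a smooth embedding, and condition (ii) for the local freeness of $H_y$, I would apply the smooth reduction theorem from \cite{JMM2025deformations} to obtain, on each smooth stratum, a smooth Hamiltonian deformation from $(M_i)_{e^y}$ to $(N_{l_i})_y$. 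These smooth deformations glue along the continuous ambient structure constructed above and provide the strata of $\bar{\mathcal{D}}'$.

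The main obstacle will be ensuring that the orbit-type stratifications of the fiberwise reductions match consistently along the family, so that the partition of $\bar{\mathcal{D}}'$ into smooth pieces agrees with the smooth deformations produced by the reduction theorem. The key point is that the local freeness from (ii) holds uniformly in $t\in I\setminus\{0\}$ and the diffeomorphism in (i) controls the moment-value side, forcing the $H_y$-isotropy type of a point in $\bar{\mu}_2^{-1}(\mathcal{X})$ to be locally constant in the deformation parameter; this yields the required bijection between smooth strata of $M_{e^y}$ and of $N_y$ and the compatibility of their individual Hamiltonian deformations, completing the verification.
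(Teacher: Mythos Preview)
Your proposal is correct and follows essentially the same route as the paper: descend $\bar{\pi}$ and $\bar{\mu}_1$ to the quotient $\bar{\mu}_2^{-1}(\mathcal{X})/H_y$, identify the fibers as $M_{e^{ty}}$ and $N_y$ via hypothesis~(ii) and the orbit-type decomposition, and then apply the smooth reduction theorem \cite[Theorem~3.3]{JMM2025deformations} stratum-by-stratum using hypotheses~(i)--(iii). The only ingredient present in the paper but absent from your outline is a brief verification (by adapting Lemma~\ref{lem 6.2}) that the quotient is Hausdorff, locally compact, and second countable; this is not strictly demanded by Definition~\ref{gen H}, so your argument stands.
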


\begin{proof}
Set $\mathcal{Z} = \bar{\mu}_2^{-1}(\mathcal{X})$ and $\bar{\mathcal{D}}_{\mathrm{red}} = \mathcal{Z}/H_y$. Under the fiber decomposition,
\[
\mathcal{Z} = \bigcup_{t \in I \setminus \{0\}} (\bar{\mu}_{2,t})^{-1}(e^{ty}) \sqcup (\bar{\mu}_{2,0})^{-1}(y),
\]
where $\bar{\mu}_{2,t} \coloneqq \bar{\mu}_2|_{\bar{\mathcal{D}}_t}$. The set $\mathcal{Z}$ is $H_y$-invariant. Adapting the proof of Lemma~\ref{lem 6.2}, the projection $p: \mathcal{Z} \to \bar{\mathcal{D}}_{\mathrm{red}}$ is proper, and $\bar{\mathcal{D}}_{\mathrm{red}}$ is Hausdorff, locally compact, and second countable.

For $t \neq 0$, the group $H_y$ acts locally freely on $(\mu_H)|_{M_i}^{-1}(e^{ty})$. Since $\bar{\mathcal{D}}_t \cong M$, it follows that $H_y$ acts locally freely on $\bar{\mu}_{2,t}^{-1}(e^{ty})$. By \cite[Theorem~2.9]{hurtubise2006group}, the quotient $\bar{\mu}_{2,t}^{-1}(e^{ty})/H_y$ inherits a stratified structure where each stratum is a $G$-quasi-Hamiltonian smooth manifold. Let $\{Z^t_i : i \in I_t\}$ be the connected components of the orbit-type strata of $\bar{\mu}_{2,t}^{-1}(e^{ty})$. Then
\[
(\bar{\mu}_{2,t})^{-1}(e^{ty})/H_y = \bigsqcup_{i \in I_t} Z^t_i/H_y.
\]

For $t = 0$, let $\{W_j : j \in J\}$ be the connected components of the orbit-type strata of $(\bar{\mu}_{2,0})^{-1}(y)$. Then
\[
(\bar{\mu}_{2,0})^{-1}(y)/H_y = \bigsqcup_{j \in J} W_j/H_y.
\]

Define $\bar{\pi}^{\mathrm{red}}: \bar{\mathcal{D}}_{\mathrm{red}} \to \mathbb{R}$ by the commutative diagram
\[
\begin{tikzcd}[sep=small]
\mathcal{Z} \arrow[rr, "{\bar{\pi}|_{\mathcal{Z}}}"] \arrow[dr, "p"'] & & \mathbb{R} \\
& \bar{\mathcal{D}}_{\mathrm{red}} \arrow[ur, "{\bar{\pi}^{\mathrm{red}}}"']
\end{tikzcd}
\]
The map $\bar{\pi}^{\mathrm{red}}$ is well-defined and $G$-invariant. Since $p$ is a quotient map and $\bar{\pi}^{\mathrm{red}} \circ p = \bar{\pi}|_{\mathcal{Z}}$, the map $\bar{\pi}^{\mathrm{red}}$ is continuous by the universal property of quotient spaces. Similarly, the moment map $\bar{\mu}^{\mathrm{red}}: \bar{\mathcal{D}}_{\mathrm{red}} \to \mathcal{G}$ defined by the commutative diagram
\[
\begin{tikzcd}[sep=small]
\mathcal{Z} \arrow[rr, "{(\bar{\mu}_1)|_{\mathcal{Z}}}"] \arrow[dr, "p"'] & & \mathcal{G} \\
& \bar{\mathcal{D}}_{\mathrm{red}} \arrow[ur, "{\bar{\mu}^{\mathrm{red}}}"']
\end{tikzcd}
\]
is well-defined, continuous, and $G$-equivariant.

It remains to show that each $G$-quasi-Hamiltonian stratum $X_{\alpha,i}$ of $M_{e^y}$ deforms smoothly to the corresponding stratum $Y_{\beta,l_i}$ of $N_y$. By hypothesis, $M_i$ deforms smoothly to $N_{l_i}$. Let $\tilde{X}_{\alpha,i} \subset (\mu_H)_{|M_i}^{-1}(e^y)$ be a stratum projecting to $X_{\alpha,i}$, and let $\tilde{Y}_{\beta,l_i} \subset (\nu_H)_{|N_{l_i}}^{-1}(y)$ project to $Y_{\beta,l_i}$. By \cite[Theorem~3.3]{JMM2025deformations}, the stratum $X_{\alpha,i}$ deforms smoothly to $Y_{\beta,l_i}$.
\end{proof}

\section{Examples of generalized deformations}\label{sect 6}

\subsection{The imploded cross-section of the double $D(G)$}

Recall that for a simply connected compact Lie group $G$, the imploded cross-section $D(G)_{\mathrm{impl}}$ is a $G\times T$-quasi-Hamiltonian manifold in the sense of Definition~\ref{gen}. For each face $\sigma$ of the alcove $A$, the stratum $X_\sigma = G/[G_\sigma, G_\sigma] \times \exp(\sigma)$ is a smooth $G\times T$-quasi-Hamiltonian manifold with moment map given by restricting $\nu_{\mathrm{impl}}: D(G)_{\mathrm{impl}} \to G \times T$ to $X_\sigma$; see \cite[Lemma~4.5]{hurtubise2006group}. The $G\times T$-action on each stratum is
\begin{equation}\label{eq h}
(g,t) \cdot (h, \exp(x)) = (ght^{-1}, \mathrm{Ad}_t \exp(x)) = (ght^{-1}, \exp(x)).
\end{equation}

The quasi-Hamiltonian two-form $\lambda_\sigma$ is given by \cite[Subsection~1.6]{eshmatov2009group}: for $p = (g, \exp(x)) \in X_\sigma$ and tangent vectors of the form $(gu_i, \exp(x)\eta_i)$ with $u_i \in \mathfrak{g}$ and $\eta_i \in \xi_i + \mathfrak{z}(\mathfrak{g}_\sigma)$,
\begin{align*}
\lambda^\sigma_{(g,x)}\bigl((gu_1, \exp(x)\eta_1), (gu_2, \exp(x)\eta_2)\bigr) &= \frac{1}{2}\langle (\mathrm{Ad}_{\exp(x)} - \mathrm{Ad}_{\exp(-x)})u_1, u_2 \rangle_{\mathfrak{g}} \\
&\quad + \langle u_1, \eta_2 \rangle_{\mathfrak{g}} - \langle u_2, \eta_1 \rangle_{\mathfrak{g}}.
\end{align*}

Similarly, the imploded cross-section $(T^*G)_{\mathrm{impl}}$ of the cotangent bundle is a $G\times T$-Hamiltonian manifold with moment map $\mu_L \times (\mu_R)_{\mathrm{impl}}$. For each face $\tau$ of the Weyl chamber $\mathfrak{t}^*_+$, the stratum $Y_\tau = G/[G_\tau, G_\tau] \times \tau$ is a smooth $G\times T$-Hamiltonian manifold with action
\begin{equation}\label{eq qh}
(g,t) \cdot (h, x) = (ght^{-1}, \mathrm{Ad}_t x) = (ght^{-1}, x),
\end{equation}
and two-form
\begin{equation}
\omega^\tau_{(g,x)}\bigl((u_1, v_1), (u_2, v_2)\bigr) = \langle u_1, v_2 \rangle_{\mathfrak{g}} - \langle u_2, v_1 \rangle_{\mathfrak{g}} + \langle x, [u_1, u_2] \rangle_{\mathfrak{g}}.
\end{equation}

By \cite[Theorem~4.1]{JMM2025deformations}, the space $\mathcal{D} = G \times \mathcal{G}$ admits the structure of a smooth Hamiltonian deformation from $D(G)$ to $T^*G$. We now extend this deformation to the imploded setting stratum by stratum.

For any face $\hat{\sigma}$ of the alcove whose closure does not contain the origin, the corresponding stratum $G/[G_{\hat{\sigma}}, G_{\hat{\sigma}}] \times \exp(\hat{\sigma})$ has no additive counterpart; we set
\[
\mathcal{G}_{\hat{\sigma}} \coloneqq \bigcup_{t \neq 0} \bigl(G/[G_{\hat{\sigma}}, G_{\hat{\sigma}}] \times \exp(t\hat{\sigma})\bigr) \cup (\emptyset \times \{0\}),
\]
so that this stratum deforms to the empty set. We denote by $\mathcal{B}$ the set of such faces.

For any open face $\sigma$ of the alcove with $0 \in \bar{\sigma}$, let $\tau_\sigma$ be the unique open face of $\mathfrak{t}^*_+$ containing $\sigma$. Then $G_\sigma = G_{\tau_\sigma}$ by \cite[Appendix~A.3]{hurtubise2006group}. Let $I \subset \mathbb{R}$ be an open interval containing $[0,1]$ such that $d_{tx}\exp$ is invertible for each $x \in \sigma$ and $t \in I$. Define
\[
\mathcal{G}_\sigma = \bigcup_{t \in I \setminus \{0\}} \bigl(H_\sigma \times \exp(t\sigma) \times \{t\}\bigr) \cup \bigl(H_{\tau_\sigma} \times \tau_\sigma \times \{0\}\bigr),
\]
where $H_\sigma \coloneqq G_\sigma/[G_\sigma, G_\sigma]$. The bijection between faces $\sigma$ of $A$ with $0 \in \bar{\sigma}$ and open faces $\tau_\sigma$ of $\mathfrak{t}^*_+$ yields the decomposition
\[
(T^*G)_{\mathrm{impl}} = \bigsqcup_{\tau_\sigma \in \Sigma} G_\sigma/[G_\sigma, G_\sigma] \times \tau_\sigma.
\]

\begin{Prop}\label{good prop}
The set $\mathcal{G}_\sigma$ carries the structure of a smooth Hamiltonian deformation from $X_\sigma$ to $Y_{\tau_\sigma}$.
\end{Prop}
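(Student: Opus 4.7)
The plan is to equip $\mathcal{G}_\sigma$ with a global smooth chart generalizing the one in Lemma~\ref{lem 6.4}, transport the quasi-Hamiltonian data on $X_\sigma$ and the Hamiltonian data on $Y_{\tau_\sigma}$ to a common model space, and verify the axioms of \cite[Definition~2.5]{JMM2025deformations} by a direct two-form expansion at $t=0$.

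First I would define the chart
\[
\varphi_\sigma : G/[G_\sigma, G_\sigma] \times \sigma \times I \longrightarrow \mathcal{G}_\sigma,
\qquad (g,x,t) \longmapsto
\begin{cases}
(g, \exp(tx), t), & t \neq 0,\\
(g, x, 0), & t = 0.
\end{cases}
\]
The hypothesis that $d_{tx}\exp$ is invertible on $\sigma \times I$, together with Lemma~\ref{lem 6.4} applied to the $\t$-component, shows that $\varphi_\sigma$ is a diffeomorphism onto $\mathcal{G}_\sigma$. This endows $\mathcal{G}_\sigma$ with a smooth manifold structure, and the projection $\pi:\mathcal{G}_\sigma \to \mathbb{R}$ defined by composing $\varphi_\sigma^{-1}$ with projection onto the $I$-factor is a smooth submersion. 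The formulas \eqref{eq h} and \eqref{eq qh} both act in the chart by $(g',t') \cdot (g,x,t) = (g'g(t')^{-1}, x, t)$, since $T$ acts trivially on $\exp(x)$ and on $x$; hence they assemble into a smooth $G\times T$-action on $\mathcal{G}_\sigma$.

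Next, I would construct the moment map $\hat\mu:\mathcal{G}_\sigma \to \mathcal{G}\times\mathcal{T}$. The $\mathcal{G}$-component comes from the embedding $X_\sigma \hookrightarrow D(G)_{\mathrm{impl}}$ composed with the ambient deformation $G \times \mathcal{G}$ of Corollary~\ref{cor:main-examples}(1). The $\mathcal{T}$-component is the smooth map $(x,t) \mapsto (\exp(tx),t)$ supplied by Lemma~\ref{lem 6.4}. Together these give a smooth, $G\times T$-equivariant map satisfying $\pi_{\mathcal{G}\times\mathcal{T}} \circ \hat\mu = \pi$, and the induced moment map on each fiber agrees with the one on the corresponding stratum.

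The heart of the argument is the construction of the smooth two-form $\hat\omega$ on $\mathcal{G}_\sigma$. Pulling back the rescaled quasi-Hamiltonian two-form on $X_\sigma$ via $\varphi_\sigma$ involves, in the first term,
\[
\frac{1}{2t}\bigl\langle (\mathrm{Ad}_{\exp(tx)} - \mathrm{Ad}_{\exp(-tx)}) u_1, u_2\bigr\rangle_\g
= \langle \mathrm{ad}_x u_1, u_2\rangle_\g + t^2\, O(\|x\|^3),
\]
using $e^{t\,\mathrm{ad}_x} - e^{-t\,\mathrm{ad}_x} = 2t\,\mathrm{ad}_x + O(t^3)$. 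Since $\langle \mathrm{ad}_x u_1, u_2\rangle_\g = \langle x, [u_1,u_2]\rangle_\g$ by $\mathrm{Ad}$-invariance, the $t\to 0$ limit reproduces the curvature term of $\omega^{\tau_\sigma}$. The remaining term $\langle u_1,\eta_2\rangle_\g - \langle u_2,\eta_1\rangle_\g$ is $t$-independent and appears identically in both $\lambda^\sigma$ and $\omega^{\tau_\sigma}$. This yields a smooth section of $\bigwedge^{2}(\ker d\pi)^*$ restricting to $\lambda^\sigma$ on $\pi^{-1}(1) \cong X_\sigma$ and to $\omega^{\tau_\sigma}$ on $\pi^{-1}(0) = Y_{\tau_\sigma}$.

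I expect the main obstacle to be this two-form computation: namely, verifying that the rescaling compatible with the deformation axioms on each fiber is precisely $1/t$, and that after this rescaling the resulting expression is genuinely $C^\infty$ in $(g, x, t)$ across $t=0$. The presence of the parameters $\eta_i \in \xi_i + \z(\g_\sigma)$ further requires a careful choice of smooth frame for tangent vectors at $\exp(tx)$ that extends to $t=0$. Once smoothness of $\hat\omega$ is established, the fundamental identity $d\hat\omega_t + \cdots = 0$ for $t\neq 0$, closedness at $t=0$, and the minimal degeneracy condition follow by restriction from the ambient structures on $D(G)_{\mathrm{impl}}$ and $(T^*G)_{\mathrm{impl}}$, completing the verification of \cite[Definition~2.5]{JMM2025deformations}.
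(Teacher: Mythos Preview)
Your approach is essentially identical to the paper's: the same exponential-based chart $\mathrm{Id}\times\phi_{\mathfrak t}$, the same $1/t$ rescaling of $\lambda^\sigma$, and the same Taylor expansion of $\mathrm{Ad}_{e^{\pm tx}}$ to recover the curvature term $\langle x,[u_1,u_2]\rangle_{\mathfrak g}$ at $t=0$.

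Two imprecisions are worth correcting. First, your chart has domain $H_\sigma\times\sigma\times I$, but the $t=0$ fibre of $\mathcal{G}_\sigma$ is $H_\sigma\times\tau_\sigma$ with $\sigma\subsetneq\tau_\sigma$ in general (the alcove face sits properly inside the Weyl-chamber face), so $\varphi_\sigma$ as written is not onto $\mathcal{G}_\sigma$; the paper instead works with the ambient map $\psi_\sigma=\mathrm{Id}\times\phi_{\mathfrak t}$ on $H_\sigma\times\mathfrak t\times\mathbb R$ and identifies $\mathcal{G}_\sigma$ as the image of $Y_{\tau_\sigma}\times I$. Second, the cross-terms $\langle u_1,\eta_2\rangle_{\mathfrak g}-\langle u_2,\eta_1\rangle_{\mathfrak g}$ are not literally $t$-independent after pullback: under $\varphi_\sigma$ the tangent vector $\eta_i$ at $\exp(tx)$ becomes $d\exp_{tx}(t z_i)$, so after the $1/t$ rescaling one is left with factors $d\exp_{tx}(z_i)$ (and, in the paper's normalization, an additional $\mathrm{Ad}_{e^{tx}}$), which only tend to $z_i$ as $t\to 0$. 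The paper carries these factors explicitly before passing to the limit.
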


\begin{proof}
The map 
\[
\psi_\sigma \coloneqq \mathrm{Id}\times \phi_{\mathfrak t} :
H_\sigma \times \mathfrak t \times \mathbb R \longrightarrow 
H_\sigma \times \mathcal T
\]
restricts to a diffeomorphism $\psi_\sigma : H_\sigma \times U_\tau \to H_\sigma \times V_\tau$ satisfying $\pi_{\mathcal G_\sigma}\circ \psi_\sigma=\mathrm{pr}_{\mathbb R}$. Since $\psi_\sigma^{-1}(\mathcal G_\sigma)=Y_{\tau_\sigma}\times I$, the space $\mathcal G_\sigma$ is a smooth embedded submanifold of $H_\sigma\times \mathcal T$, and the projection $\pi_{\mathcal G_\sigma}$ is a smooth surjective submersion. The $G\times T$-equivariance of $\psi_\sigma$ ensures the induced action on $\mathcal G_\sigma$ is smooth.

It remains to show that the rescaled $2$-form $\hat\omega_\sigma \coloneqq \frac{1}{t}\lambda^\sigma$ on $\pi_{\mathcal G_\sigma}^{-1}(\mathbb R^\times)$ extends smoothly to $t=0$ with limit $\omega^{\tau_\sigma}$. The argument adapts the rescaling technique of \cite[Theorem~4.1]{JMM2025deformations} to the present stratified setting. For $(g,x,t)\in H_\sigma\times \mathfrak t\times \mathbb R^\times$ and tangent vectors $(y_i,z_i)\in T_gH_\sigma \times \mathfrak t$, we compute
\begin{align*}
(\psi_\sigma^*\hat\omega_\sigma)_{(g,x,t)}
\bigl((y_1,z_1),(y_2,z_2)\bigr)
&=
\frac{1}{t}\,
\lambda^\sigma_{(g,e^{tx})}
\Bigl(
(y_1,\, d\exp_{tx}(t z_1)),
(y_2,\, d\exp_{tx}(t z_2))
\Bigr)
\\[0.3em]
&=
\frac{1}{2t}
\Bigl(
\langle \mathrm{Ad}_{e^{tx}}y_1, y_2\rangle
-
\langle \mathrm{Ad}_{e^{tx}}y_2, y_1\rangle
\Bigr)
\\
&\quad
+\frac{1}{2t}
\Bigl(
\langle y_1,\, d\exp_{tx}(t z_2)+\mathrm{Ad}_{e^{tx}}d\exp_{tx}(t z_2)\rangle
\\[-0.2em]
&\qquad\qquad
-
\langle y_2,\, d\exp_{tx}(t z_1)+\mathrm{Ad}_{e^{tx}}d\exp_{tx}(t z_1)\rangle
\Bigr).
\end{align*}
Rewriting the first term using symmetry of the inner product and canceling the factor of $t$ inside $d\exp_{tx}(t z_i)$, we obtain
\begin{align*}
(\psi_\sigma^*\hat\omega_\sigma)_{(g,x,t)}
\bigl((y_1,z_1),(y_2,z_2)\bigr)
&=
\Bigl\langle 
y_1,\,
\frac{\mathrm{Ad}_{e^{-tx}}-\mathrm{Ad}_{e^{tx}}}{2t}\,y_2
\Bigr\rangle
\\
&\quad
+\frac{1}{2}
\Bigl(
\langle y_1,\, d\exp_{tx}(z_2)+\mathrm{Ad}_{e^{tx}}d\exp_{tx}(z_2)\rangle
\\[-0.2em]
&\qquad\qquad
-
\langle y_2,\, d\exp_{tx}(z_1)+\mathrm{Ad}_{e^{tx}}d\exp_{tx}(z_1)\rangle
\Bigr).
\end{align*}
Each term has a well-defined limit as $t\to 0$ by Taylor expansion: $\frac{\mathrm{Ad}_{e^{-tx}}-\mathrm{Ad}_{e^{tx}}}{2t} \to -\mathrm{ad}_x$ and $d\exp_{tx} \to \mathrm{Id}$. The resulting limit is $\omega^{\tau_\sigma}$.
\end{proof}

Let $\mathcal{X} \coloneqq (\exp(\bar{A}) \times \mathbb{R}^\times) \sqcup (\mathfrak{t}^*_+ \times \{0\}) \subset \mathcal{D}(T, \{1\})$, where $\mathfrak{t}^*_+$ denotes the closure of the fundamental Weyl chamber. The preimage $\widehat{\mu}_2^{-1}(\mathcal{X})$ is a topological subspace of $\mathcal{D} = G \times \mathcal{G}$ with a continuous $G\times T$-action coming from \cite[Proposition~2.2]{JMM2025deformations} and equations \eqref{eq h} and \eqref{eq qh}. Define the equivalence relation $\mathcal{R}$ on $\widehat{\mu}_2^{-1}(\mathcal{X})$ by:
\begin{itemize}
\item $(a, b, t) \mathrel{\mathcal{R}} (a_1, b_1, s)$ if there exists $g \in [G_{\widehat{\mu}_2(a,b,t)}, G_{\widehat{\mu}_2(a,b,t)}]$ such that $(a_1, b_1, s) = (ag^{-1}, b, t)$;
\item $(a, x, 0) \mathrel{\mathcal{R}} (a_1, x_1, 0)$ if there exists $l \in [G_{\widehat{\mu}_2(a,x,0)}, G_{\widehat{\mu}_2(a,x,0)}]$ such that $(a_1, x_1, 0) = (al^{-1}, x, 0)$.
\end{itemize}
We denote by $\widehat{\mu}_2^{-1}(\mathcal{X})_{\mathrm{impl}} \coloneqq \widehat{\mu}_2^{-1}(\mathcal{X})/\mathcal{R}$ the quotient space and by $p: \widehat{\mu}_2^{-1}(\mathcal{X}) \to \widehat{\mu}_2^{-1}(\mathcal{X})_{\mathrm{impl}}$ the canonical projection.

\begin{Lem}\label{lem 6.2}
The projection $p$ is proper, and $\widehat{\mu}_2^{-1}(\mathcal{X})_{\mathrm{impl}}$ is Hausdorff, locally compact, and second countable.
\end{Lem}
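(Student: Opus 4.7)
\medskip

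\noindent\emph{Sketch of proof.} The plan is to view $p$ as a parametric version of the classical implosion quotient map, and to adapt the arguments of \cite{guillemin2002, hurtubise2006group} across the deformation parameter $t$. I would first observe that $\widehat{\mu}_2^{-1}(\mathcal{X})$ is a closed subset of the locally compact Hausdorff second countable space $G \times \mathcal{G}$, and therefore inherits these properties. The equivalence relation $\mathcal{R}$ fixes the $\mathcal{G}$-coordinate and alters the first $G$-factor by left multiplication with an element of the compact subgroup $[G_\mu, G_\mu] \subset G$, where $\mu = \widehat{\mu}_2(a, b, t)$. Hence each $\mathcal{R}$-class is a compact orbit and $p$ has compact fibers.

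The central step is to establish that the graph $R \subset \widehat{\mu}_2^{-1}(\mathcal{X})^{2}$ of $\mathcal{R}$ is closed. Given a convergent sequence $((a_n, b_n, t_n), (a_n g_n, b_n, t_n)) \to ((a, b, t), (a', b, t))$ with $g_n \in [G_{\mu_n}, G_{\mu_n}]$ and $\mu_n = \widehat{\mu}_2(a_n, b_n, t_n)$, compactness of $G$ allows passage to a subsequence with $g_n \to g$ and $a' = ag$. To conclude $g \in [G_\mu, G_\mu]$, I would invoke the upper semicontinuity of the stabilizer assignment $\mu \mapsto G_\mu$ along $\mathcal{X}$. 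In the multiplicative regime $t \neq 0$ this is the standard upper semicontinuity of centralizers on $\exp(\bar{A})$; across $t = 0$ it follows from the identification $G_\sigma = G_{\tau_\sigma}$ of \cite[Appendix A.3]{hurtubise2006group}, combined with the fact that $G_{\exp(tx)} = G_x$ for small $t$ and $x$ in a fixed face. This yields closure of $R$. Combined with compactness of fibers and the Hausdorff, locally compact structure of the source, $p$ becomes a proper, closed quotient map.

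With $p$ proper and closed, Hausdorffness of $\widehat{\mu}_2^{-1}(\mathcal{X})_{\mathrm{impl}}$ follows by the standard tube-lemma separation: given distinct classes $[x] \neq [y]$, their disjoint compact preimages admit disjoint open neighborhoods $U, V \subset \widehat{\mu}_2^{-1}(\mathcal{X})$, and by closedness of $p$ the sets $\widehat{\mu}_2^{-1}(\mathcal{X})_{\mathrm{impl}} \setminus p(\widehat{\mu}_2^{-1}(\mathcal{X}) \setminus U)$ and the analogue for $V$ are disjoint saturated open neighborhoods of $[x]$ and $[y]$ in the quotient. Local compactness and second countability are then inherited from the source via the general principle that a proper closed surjection from a locally compact Hausdorff second countable space has a locally compact Hausdorff second countable image.

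The principal obstacle is verifying the upper semicontinuity of $[G_\mu, G_\mu]$ uniformly across the deformation, in particular at the transition $t = 0$ where multiplicative stabilizers $Z_G(\exp(tx))$ must match the additive stabilizers $G_x$ in a continuous way. Once this semicontinuity is established, the remainder of the argument is a formal consequence of the compactness of $G$ and standard techniques from the theory of proper group actions on locally compact Hausdorff spaces.
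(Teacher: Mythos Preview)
Your approach is essentially the same as the paper's. The paper adapts \cite[Lemma~2.3]{guillemin2002} by showing directly that the saturation $p^{-1}(p(C))$ of any closed set $C$ is closed, via a case analysis on the deformation parameter ($t\neq 0$, $t=0$, and the mixed case $t_i\neq 0\to t=0$); you instead show the graph of $\mathcal{R}$ is closed and then deduce closedness of $p$. The sequence arguments are virtually identical---both extract a convergent subsequence $g_n\to g$ using compactness of $G$ and then invoke the face-structure semicontinuity, with the cross-regime case handled by the identification $G_\sigma=G_{\tau_\sigma}$ from \cite[Appendix~A.3]{hurtubise2006group}, exactly as you anticipate. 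One small point: your inference ``closed graph $+$ compact fibers $\Rightarrow$ $p$ closed'' is not a general fact and implicitly uses that all the group elements $g_n$ live in the fixed compact group $G$ (so the relevant sequences $c_n$ stay in a compact set); the paper's saturation argument makes this step explicit, but in this setting your formulation goes through without difficulty.
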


\begin{proof}
We adapt the proof of \cite[\text{Lemma 2.3}]{guillemin2002}. We first show that $p$ is closed. Let $C$ be a closed subset of $\widehat{\mu}_2^{-1}(\mathcal{X})$. We claim that
\[
p^{-1}(p(C)) = \bigcup_{t \in \mathbb{R}} \biggl(\bigsqcup_{\sigma_t \in \Sigma_t} \widehat{\mu}_{2,t}^{-1}(\sigma_t) \cap [G_{\sigma_t}, G_{\sigma_t}] \cdot (C \cap \pi^{-1}(t))\biggr),
\]
where $\Sigma_t$ denotes the faces of the alcove $A$ for $t \neq 0$ and the faces of $\mathfrak{t}^*_+$ for $t = 0$, and $\widehat{\mu}_{2,t} = (\widehat{\mu}_2)|_{\pi^{-1}(t)}$.

Let $(a_i, b_i, t_i) \in p^{-1}(p(C))$ be a sequence converging to $(a, b, t) \in \widehat{\mu}_2^{-1}(\mathcal{X})$.\\

\begin{enumerate}
\item \emph{Case $t \neq 0$:} For large $i$, we have $t_i \to t$. Let $\sigma$ be the face of $A$ with $b \in \exp(\sigma)$. Passing to a subsequence, assume all $b_i$ lie in some face $\tau$ of $A$. Since $b_i \to b$ and $b \in \exp(\sigma)$, we have $\sigma \leq \tau$ in the face partial order, hence $[G_\tau, G_\tau] \subseteq [G_\sigma, G_\sigma]$. By definition of saturation, there exist $c_i = (\tilde{a}_i, b_i, t_i) \in C$ and $g_i \in [G_\tau, G_\tau]$ such that $a_i = \tilde{a}_i g_i^{-1}$. Since $[G_\tau, G_\tau]$ is compact, passing to a subsequence, $g_i \to g \in [G_\tau, G_\tau] \subseteq [G_\sigma, G_\sigma]$. Since $C$ is closed and $c_i = (a_i g_i, b_i, t_i) \to (ag, b, t) \in C$, we have $m = (a, b, t) = g^{-1} \cdot (ag, b, t) \in [G_\sigma, G_\sigma] \cdot (C \cap \pi^{-1}(t))$, so $m \in p^{-1}(p(C))$.\\

\item \emph{Case $t = 0$:} This is treated analogously using faces of $\mathfrak{t}^*_+$ instead of $\bar{A}$.\\

\item \emph{Case $t_i \neq 0$ and $t = 0$:} The deformation space $\mathcal{D}(T, \{1\})$ has a smooth structure such that convergence $(h_i, t_i) \to (y, 0)$ means $h_i = \exp(t_i y_i)$ for some $y_i \to y \in \mathfrak{t}$. Since $b_i \in \exp(\bar{A})$ and $(b_i, t_i) \to (x, 0)$ in $\mathcal{X} \subset \mathcal{D}(T, \{1\})$, we may write $b_i = \exp(t_i \xi_i)$ where $\xi_i \to x$. Here $\xi_i \in \frac{1}{t_i}\bar{A}$ and $x$ lies in some face $\tau$ of $\mathfrak{t}^*_+$. Let $\sigma_i$ be the face of $A$ with $t_i \xi_i \in \sigma_i$. By \cite[Appendix~A.3]{hurtubise2006group}, for faces $\sigma$ of $\bar{A}$ with $0 \in \bar{\sigma}$, there exists a corresponding face $\tau_\sigma$ of $\mathfrak{t}^*_+$ such that $G_{\exp(\sigma)} = G_\sigma = G_{\tau_\sigma}$. For $i$ sufficiently large, since $\xi_i \to x$ and $\xi_i$ is close to $\tau$, the face structure implies $\tau_{\sigma_i} \geq \tau$, hence $G_{\tau_{\sigma_i}} \subseteq G_\tau$ and $[G_{\sigma_i}, G_{\sigma_i}] \subseteq [G_\tau, G_\tau]$.

Since $(a_i, b_i, t_i) \in p^{-1}(p(C))$, there exist $c_i = (\tilde{a}_i, b_i, t_i) \in C$ and $g_i \in [G_{b_i}, G_{b_i}]$ such that $a_i = \tilde{a}_i g_i^{-1}$. For large $i$, we have $g_i \in [G_{b_i}, G_{b_i}] \subseteq [G_{\sigma_i}, G_{\sigma_i}] \subseteq [G_\tau, G_\tau]$. Since $[G_\tau, G_\tau]$ is compact, passing to a subsequence, $g_i \to g \in [G_\tau, G_\tau]$. Thus $c_i = (a_i g_i, b_i, t_i) \to (ag, x, 0)$. Since $C$ is closed, $(ag, x, 0) \in C$. As $g \in [G_\tau, G_\tau] = [G_x, G_x]$ and $(a, x, 0)$ is equivalent to $(ag, x, 0)$, we obtain
\[
m = (a, x, 0) \in \widehat{\mu}_2^{-1}(x) \cap [G_x, G_x] \cdot C \subseteq p^{-1}(p(C)).
\]

Thus $p$ is closed. Since $p$ has compact fibers, it is proper, and the stated properties of $\widehat{\mu}_2^{-1}(\mathcal{X})_{\mathrm{impl}}$ follow.
\end{enumerate}
\end{proof}

\begin{Thm}\label{thm 7.4}
The space $\bar{\mathcal{D}} \coloneqq \widehat{\mu}_2^{-1}(\mathcal{X})_{\mathrm{impl}}$ admits the structure of a generalized Hamiltonian deformation from $D(G)_{\mathrm{impl}}$ to $(T^*G)_{\mathrm{impl}}$.
\end{Thm}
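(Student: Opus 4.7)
The plan is to realize $\bar{\mathcal{D}}$ as the stratum-wise assembly of the smooth Hamiltonian deformations $\mathcal{G}_\sigma$ from Proposition~\ref{good prop}, together with the extra pieces $\mathcal{G}_{\hat\sigma}$ for $\hat\sigma \in \mathcal{B}$, and then verify the conditions of Definition~\ref{gen H} by transporting the continuous structures from $\widehat{\mu}_2^{-1}(\mathcal{X}) \subset G \times \mathcal{G}$ across the quotient $p$ using Lemma~\ref{lem 6.2}. First I would set up the continuous data on $\bar{\mathcal{D}}$: the source space $\widehat{\mu}_2^{-1}(\mathcal{X})$ inherits a continuous $G \times T$-action, a $G \times T$-invariant projection $\bar\pi$ to $\mathbb R$, and a $G \times T$-equivariant moment map $\bar\mu : \widehat{\mu}_2^{-1}(\mathcal{X}) \to \mathcal{G}\times\mathcal{T}$ from the ambient deformation $G \times \mathcal{G}$ together with formulas \eqref{eq h} and \eqref{eq qh}. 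Since the equivalence relation $\mathcal{R}$ only quotients by orbits of commutator subgroups of stabilizers of $\bar\mu_2$, it preserves $\bar\pi$ and $\bar\mu$, and commutes with the $G\times T$-action on the left factor; by the universal property of the quotient and the fact that $p$ is a (proper) quotient map from Lemma~\ref{lem 6.2}, all three descend to continuous maps on $\bar{\mathcal{D}}$ with the required equivariance properties and with $\pi_{\mathcal{G}\times\mathcal{T}}\circ \bar\mu = \bar\pi$.

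Next I would identify the fibers. For $t \neq 0$, the fiber $\bar{\mathcal{D}}_t$ is $\widehat{\mu}_{2,t}^{-1}(\exp(\bar A))$ modulo the alcove-face implosion relation on $\widehat{\mathcal{D}}_t \cong D(G)$ (via Corollary~\ref{cor:main-examples}), which is by definition $D(G)_{\mathrm{impl}}$ as a generalized $G\times T$-quasi-Hamiltonian space; in particular $\bar{\mathcal{D}}_1 \cong D(G)_{\mathrm{impl}}$. For $t=0$, the fiber is $\widehat{\mu}_{2,0}^{-1}(\mathfrak t^*_+)$ modulo the Weyl-chamber-face implosion relation on $\widehat{\mathcal{D}}_0 \cong T^*G$, hence $\bar{\mathcal{D}}_0 \cong (T^*G)_{\mathrm{impl}}$ as a generalized Hamiltonian space. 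The required stratum-wise smooth deformations are then supplied directly by Proposition~\ref{good prop}: for each open face $\sigma$ of the alcove with $0 \in \bar\sigma$, the space $\mathcal{G}_\sigma$ is a smooth Hamiltonian deformation from $X_\sigma$ to $Y_{\tau_\sigma}$ in the sense of \cite[Definition~2.5]{JMM2025deformations}, while each $\mathcal{G}_{\hat\sigma}$ with $\hat\sigma \in \mathcal{B}$ deforms to the empty stratum, which is consistent since these faces have no additive counterpart. This gives the set-theoretic decomposition
\[
\bar{\mathcal{D}} = \Bigl(\bigsqcup_{\sigma:\, 0 \in \bar\sigma} \mathcal{G}_\sigma\Bigr) \;\sqcup\; \Bigl(\bigsqcup_{\hat\sigma \in \mathcal{B}} \mathcal{G}_{\hat\sigma}\Bigr),
\]
matching strata of $D(G)_{\mathrm{impl}}$ at $t=1$ with strata of $(T^*G)_{\mathrm{impl}}$ at $t=0$ via the face bijection of \cite[Appendix~A.3]{hurtubise2006group}.

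The main obstacle will be checking that these pieces are genuinely glued continuously across $t=0$, rather than merely being a disjoint union of deformations. The delicate point is that a sequence $(a_i, b_i, t_i)$ living in an alcove-face stratum $X_{\sigma_i}$ with $t_i \to 0$ can converge in $\bar{\mathcal{D}}$ to a point of a Weyl-chamber-face stratum $Y_\tau$, and one must know that the two implosion relations are compatible along such limits; concretely, one needs the containment $[G_{\sigma_i}, G_{\sigma_i}] \subseteq [G_\tau, G_\tau]$ for $i$ sufficiently large so that elements identifying points at $t_i \neq 0$ survive to identify their limits at $t=0$. This is exactly the third case in the proof of Lemma~\ref{lem 6.2}, and once that is in hand the properness of $p$ together with the universal property of quotient topologies makes $\bar{\mathcal{D}}$ Hausdorff, locally compact, and second countable, and ensures continuity of $\bar\pi$, $\bar\mu$, and the $G\times T$-action. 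Combining this topological control with the stratum-wise smooth deformations above yields all the data required by Definition~\ref{gen H}.
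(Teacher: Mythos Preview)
Your proposal is correct and follows essentially the same approach as the paper: both arguments descend the continuous $G\times T$-action, projection $\bar\pi$, and moment map $\bar\mu$ through the quotient $p$ using Lemma~\ref{lem 6.2} and the universal property, identify the fibers as $D(G)_{\mathrm{impl}}$ and $(T^*G)_{\mathrm{impl}}$, and then invoke Proposition~\ref{good prop} for the stratum-wise smooth deformations (with the $\hat\sigma\in\mathcal{B}$ strata deforming to the empty set). Your discussion of the ``main obstacle'' is in fact more explicit than the paper's treatment, but it is precisely the content of the third case in Lemma~\ref{lem 6.2}, which the paper simply cites.
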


\begin{proof}
By Lemma \ref{lem 6.2}, $\bar{\mathcal{D}}$ is Hausdorff, locally compact and second countable.  Let $p: \widehat{\mu}_2^{-1}(\mathcal{X}) \to \bar{\mathcal{D}}$ be the canonical projection and define $\bar{\pi}: \bar{\mathcal{D}} \to \mathbb{R}$ by the commutative diagram
\[
\begin{tikzcd}[sep=small]
\widehat{\mu}_2^{-1}(\mathcal{X}) \arrow[rr, "{\pi|_{\widehat{\mu}_2^{-1}(\mathcal{X})}}"] \arrow[dr, "p"'] & & \mathbb{R} \\
& \bar{\mathcal{D}} \arrow[ur, "{\bar{\pi}}"']
\end{tikzcd}
\]
The map $\bar{\pi}$ is well-defined and $G\times T$-invariant. Since $\pi|_{\widehat{\mu}_2^{-1}(\mathcal{X})}$ is continuous and $p$ is a quotient map, $\bar{\pi}$ is continuous by the universal property of quotient spaces. The $G\times T$-action on $\widehat{\mu}_2^{-1}(\mathcal{X})$ is continuous, and since $p$ respects the equivalence relation, the induced action on $\bar{\mathcal{D}}$ is continuous.

Note that $\bar{\mathcal{D}}_t \cong D(G)_{\mathrm{impl}}$ for $t \neq 0$ and $\bar{\mathcal{D}}_0 \cong (T^*G)_{\mathrm{impl}}$. Define $\bar{\mu}: \bar{\mathcal{D}} \to \mathcal{G} \times \mathcal{T}$ to equal $\mu_{\mathrm{impl}}$ for $t \neq 0$ and $\nu_{\mathrm{impl}}$ for $t = 0$. The continuity of $\bar{\mu}$ follows from the continuity of $\widehat{\mu}|_{\widehat{\mu}_2^{-1}(\mathcal{X})}$ and the commutative diagram
\[
\begin{tikzcd}[sep=small]
\widehat{\mu}_2^{-1}(\mathcal{X}) \arrow[rr, "{\widehat{\mu}|_{\widehat{\mu}_2^{-1}(\mathcal{X})}}"] \arrow[dr, "p"'] & & \mathcal{G} \times \mathcal{T} \\
& \bar{\mathcal{D}} \arrow[ur, "{\bar{\mu}}"']
\end{tikzcd}
\]

The closure of any face $\tau$ of $\mathfrak{t}^*_+$ contains the origin. By Proposition~\ref{good prop}, each stratum $X_\sigma$ of $D(G)_{\mathrm{impl}}$ with $0 \in \bar{\sigma}$ deforms to the corresponding stratum $Y_{\tau_\sigma}$ of $(T^*G)_{\mathrm{impl}}$. For $\sigma \in \mathcal{B}$, the stratum $X_\sigma$ deforms to the empty set.
\end{proof}

\subsection{Moduli space of flat connections}

Let $M$ be a $G$-quasi-Hamiltonian manifold and $N$ a $G$-Hamiltonian manifold, with $T \subset G$ a maximal torus. For any $\lambda \in \mathfrak{t}^*_+$, we have an isomorphism of $G$-quasi-Hamiltonian manifolds $N_{\mathrm{impl}} /\!/_\lambda T \cong N /\!/_\lambda G$ by \cite[Theorem~3.4]{guillemin2002}. Similarly, for any $g \in \mu(M) \cap \exp(\bar{A})$, there is a homeomorphism $M_{\mathrm{impl}} /\!/_g T \cong M /\!/_g G$ by \cite[Addendum~3.18]{hurtubise2006group}. Thus deforming the quasi-Hamiltonian reduced space $M_{\mathrm{impl}} /\!/_g T$ is equivalent to deforming $M_{\mathrm{impl}} /\!/_g G$ to a Hamiltonian reduced space.

We apply this to the \emph{master moduli space} $\mathfrak{M}$; see \cite[Section~6]{hurtubise2006group}. For a compact connected oriented smooth surface $\Sigma$ of genus $g$ with $r+1$ boundary components, the moduli space $\mathfrak{M}(\Sigma)$ of $G$-flat connections on the trivial principal bundle $\Sigma \times G$ is a $G^{r+1}$-quasi-Hamiltonian manifold isomorphic to
\[
\mathfrak{M}(\Sigma) = \underbrace{D(G) \circledast \cdots \circledast D(G)}_{r \text{ times}} \circledast \underbrace{\mathbf{D}(G) \circledast \cdots \circledast \mathbf{D}(G)}_{g \text{ times}},
\]
which deforms to its additive counterpart
\[
\mathfrak{N}(\Sigma) = \underbrace{T^*G \circledast \cdots \circledast T^*G}_{r \text{ times}} \circledast \underbrace{\mathbf{T^*}G \circledast \cdots \circledast \mathbf{T^*}G}_{g \text{ times}}.
\]

The master moduli space $\mathfrak{M}$ is the implosion of $\mathfrak{M}(\Sigma)$ with respect to $G^{r+1}$:
\[
\mathfrak{M} = \underbrace{D(G)_{\mathrm{impl}} \circledast \cdots \circledast D(G)_{\mathrm{impl}}}_{r \text{ times}} \circledast \underbrace{\mathbf{D}(G) \circledast \cdots \circledast \mathbf{D}(G)}_{g \text{ times}}.
\]
We define its additive version as
\[
\mathfrak{N}(\Sigma)_{\mathrm{impl}} = \underbrace{(T^*G)_{\mathrm{impl}} \circledast \cdots \circledast (T^*G)_{\mathrm{impl}}}_{r \text{ times}} \circledast \underbrace{\mathbf{T^*}G \circledast \cdots \circledast \mathbf{T^*}G}_{g \text{ times}}.
\]

By Theorems~\ref{thm 6.5} and~\ref{thm 7.4}, we obtain:

\begin{Cor}
There exists a generalized Hamiltonian deformation from $\mathfrak{M}(\Sigma)_{\mathrm{impl}}$ to $\mathfrak{N}(\Sigma)_{\mathrm{impl}}$.
\end{Cor}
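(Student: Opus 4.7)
The strategy is to assemble the desired deformation from the building blocks already constructed in the paper: Theorem~\ref{thm 7.4} provides a generalized Hamiltonian deformation of each $D(G)_{\mathrm{impl}}$ factor to $(T^{*}G)_{\mathrm{impl}}$, Corollary~\ref{cor:main-examples} (transported through the equivalence of Theorem~\ref{equivalence}) yields a smooth Hamiltonian deformation of each $\mathbf{D}(G)$ factor to $\mathbf{T^{*}}G$, and Theorem~\ref{thm 6.5} guarantees that these deformations are compatible with the partial fusion operation. Iterating the fusion then produces a single generalized Hamiltonian deformation of the entire fusion product.

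More precisely, first I would produce, for each $i=1,\dots,r$, a copy $\bar{\mathcal{D}}^{(i)}$ of the generalized deformation of Theorem~\ref{thm 7.4}, viewed as a $G\times T$-space (one such $T$ per imploded factor). Independently, for each $j=1,\dots,g$, I would take the smooth Hamiltonian deformation $\mathcal{D}^{(j)}$ of $\mathbf{D}(G)$ to $\mathbf{T^{*}}G$ supplied by Corollary~\ref{cor:main-examples}, which is the internal fusion of the deformation $G\times\mathcal{G}$ of $D(G)$ to $T^{*}G$ and hence has a single smooth stratum. Regarded as a generalized Hamiltonian deformation in the sense of Definition~\ref{gen H} with trivial stratification, each $\mathcal{D}^{(j)}$ fits into the framework of Section~4.

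Next I would iterate Theorem~\ref{thm 6.5} on this collection. At each step, the two factors being fused are $G\times H_{1}$- and $G\times H_{2}$-generalized Hamiltonian deformations; the required injection $f\colon J_{1}\hookrightarrow J_{2}$ between stratification index sets is trivial when one of the factors ($\mathbf{D}(G)$ or its deformation) has a single stratum, and when two imploded factors are fused it is given by the canonical bijection induced by matching alcove faces $\sigma$ to Weyl-chamber faces $\tau_{\sigma}$ with $0\in\bar{\sigma}$, exactly as used in Proposition~\ref{good prop} and Theorem~\ref{thm 7.4}. Since Theorem~\ref{thm 6.5} asserts both that the partial fusion of generalized deformations is a generalized deformation and that internal fusion is preserved by the deformation, iterating it $r+g-1$ times yields a generalized Hamiltonian deformation of
\[
\mathfrak{M}(\Sigma)_{\mathrm{impl}}=D(G)_{\mathrm{impl}}\circledast\cdots\circledast D(G)_{\mathrm{impl}}\circledast\mathbf{D}(G)\circledast\cdots\circledast\mathbf{D}(G)
\]
to $\mathfrak{N}(\Sigma)_{\mathrm{impl}}$, with the correct fibers at $t=1$ and $t=0$ by construction.

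The main technical point to watch is that the strata appearing in iterated fusions match correctly: the partial fusion of two imploded factors is indexed by the face lattice of a single alcove, but after fusing with further factors one must verify that the stratification of the product remains compatible with Definition~\ref{gen} and with the face-matching used to define the deformation on each stratum. In particular, at $t=0$ the strata of $\mathfrak{N}(\Sigma)_{\mathrm{impl}}$ must coincide with the products of Weyl-chamber-face strata arising from each $(T^{*}G)_{\mathrm{impl}}$ factor, which is exactly the content of the bijection $\sigma\mapsto\tau_{\sigma}$ used in Proposition~\ref{good prop}; once this bookkeeping is in place, the rest of the proof is a direct application of Theorems~\ref{thm 6.5} and~\ref{thm 7.4}.
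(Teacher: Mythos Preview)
Your proposal is correct and follows essentially the same approach as the paper: the paper simply states that the corollary follows ``by Theorems~\ref{thm 6.5} and~\ref{thm 7.4}'', and your argument is a faithful unpacking of that sentence, with the additional (and appropriate) observation that the smooth $\mathbf{D}(G)$ factors are handled by Corollary~\ref{cor:main-examples} regarded as a generalized deformation with a single stratum. The stratification bookkeeping you flag is a reasonable caution, but it does not go beyond what the paper's terse citation already presumes.
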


Let $x_1, \ldots, x_r \in \mathfrak{t}$ satisfy the conditions of Theorem~\ref{reduction}, and let $C = (C_{e^{x_i}})$ be the collection of conjugacy classes of $e^{x_i}$. By \cite[Theorem~6.3]{hurtubise2006group}, the moduli space $\mathfrak{M}(\Sigma, C)$ of $G$-flat connections with holonomy on the boundary components lying in $C$ is the quasi-Hamiltonian reduction of $\mathfrak{M}$ by $T^{r+1}$ with respect to $C$. Since $D(G)_{\mathrm{impl}}$ deforms to $(T^*G)_{\mathrm{impl}}$ and $\mathbf{D}(G)$ deforms to $\mathbf{T^*}G$, Theorems~\ref{reduction} and~\ref{thm 6.5} yield:

\begin{Cor}
There exists a generalized Hamiltonian deformation from $\mathfrak{M}(\Sigma, C)$ to a symplectic reduction of $\mathfrak{N}(\Sigma)_{\mathrm{impl}}$.
\end{Cor}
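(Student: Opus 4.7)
The plan is to realize the desired deformation as a $T^{r+1}$--reduction of the generalized Hamiltonian deformation $\bar{\mathcal{D}}$ from $\mathfrak{M}(\Sigma)_{\mathrm{impl}}$ to $\mathfrak{N}(\Sigma)_{\mathrm{impl}}$ already produced in the preceding corollary. The key input is the identification \cite[Theorem~6.3]{hurtubise2006group}, which exhibits $\mathfrak{M}(\Sigma,C)$ as the quasi-Hamiltonian reduction of $\mathfrak{M}$ by $T^{r+1}$ at $(e^{x_i})$; reducing $\bar{\mathcal{D}}$ along the analogous curve in $\mathcal{D}(T^{r+1},\{1\})$ should then interpolate $\mathfrak{M}(\Sigma,C)$ to the symplectic reduction $\mathfrak{N}(\Sigma)_{\mathrm{impl}}\sll{(x_i)} T^{r+1}$, which is the claimed target.

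Concretely, I would take $y\in\mathfrak{t}^{r+1}$ to be the tuple with $\exp(y)=(e^{x_i})$ and $H_y=T^{r+1}$ (since $T$ is abelian), and apply Theorem~\ref{reduction} to the $T^{r+1}$--moment factor $\bar{\mu}_2$ of $\bar{\mathcal{D}}$ along the curve
\[
\mathcal{X}=\{(e^{ty},t):t\in I\setminus\{0\}\}\sqcup\{(y,0)\}\subset\mathcal{D}(T^{r+1},\{1\}).
\]
The quotient $\bar{\mu}_2^{-1}(\mathcal{X})/T^{r+1}$ is then the candidate deformation, with $t=1$ fiber $\mathfrak{M}\sll{(e^{x_i})}T^{r+1}\cong\mathfrak{M}(\Sigma,C)$ by the cited Hurtubise--Jeffrey--Sjamaar result, and $t=0$ fiber $\mathfrak{N}(\Sigma)_{\mathrm{impl}}\sll{(x_i)}T^{r+1}$. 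Verifying the three hypotheses of Theorem~\ref{reduction}: invertibility of $d\exp_{ty_i}$ is arranged by shrinking $I$ so that each $tx_i$ lies in the regular locus of $\exp$; local freeness of $T^{r+1}$ on the relevant level sets is the genericity condition built both into the hypotheses of Theorem~\ref{reduction} and into the definition of $\mathfrak{M}(\Sigma,C)$; and the lift of $\phi_y$ to $\bar{\mathcal{D}}$ is constructed factor by factor using the charts $\psi_\sigma$ of Proposition~\ref{good prop} on each $D(G)_{\mathrm{impl}}$ component and the smooth deformation $\mathbf{D}(G)\rightsquigarrow\mathbf{T}^*G$ from Corollary~\ref{cor:main-examples}.

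The main obstacle is the stratified, non-smooth nature of $\bar{\mathcal{D}}$: the reduction must be performed stratum by stratum while preserving continuity across strata. This is precisely what Theorem~\ref{reduction} is designed to handle, and on each smooth stratum it reduces to the fusion--reduction compatibility of \cite[Theorem~3.3]{JMM2025deformations}. Compactness of $T^{r+1}$, together with an argument mirroring Lemma~\ref{lem 6.2}, then guarantees that the global quotient is Hausdorff, locally compact, and second countable, with continuous moment map and projection descending correctly. Combining this with Theorem~\ref{thm 6.5} to account for the fusion structure of $\mathfrak{N}(\Sigma)_{\mathrm{impl}}$ yields the required generalized Hamiltonian deformation from $\mathfrak{M}(\Sigma,C)$ to the symplectic reduction $\mathfrak{N}(\Sigma)_{\mathrm{impl}}\sll{(x_i)}T^{r+1}$.
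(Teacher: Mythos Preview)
Your proposal is correct and follows essentially the same route as the paper: combine the generalized deformation $\mathfrak{M}(\Sigma)_{\mathrm{impl}}\rightsquigarrow\mathfrak{N}(\Sigma)_{\mathrm{impl}}$ from the preceding corollary (built via Theorems~\ref{thm 6.5} and~\ref{thm 7.4}) with the reduction Theorem~\ref{reduction} applied at $y=(x_i)\in\mathfrak{t}^{r+1}$, invoking \cite[Theorem~6.3]{hurtubise2006group} to identify the $t=1$ fiber with $\mathfrak{M}(\Sigma,C)$. The paper states the corollary as an immediate consequence of Theorems~\ref{reduction} and~\ref{thm 6.5} without spelling out the verification of hypotheses~\ref{jkmndrxc}--\ref{i4svqrm1}; your added detail on the lift $\widehat{\phi}_y$ and the stratum-by-stratum topology is consistent with (and somewhat more explicit than) the paper's argument.
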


\section{Quasi-Hamiltonian spaces $\N_G(\Gamma)$}\label{sect 7}

The Lax-Kirchhoff moduli space $\M_G(\Gamma) = T^*G^E\sll{} G^{\Gamma_{\text{int}}}$ associated to an oriented quiver $\Gamma = (E,V,s,t)$ is a $G^{\partial \Gamma}$-Hamiltonian manifold \cite[\text{Theorem 3.1, Theorem 5.2}]{maiza2025lax}. In this section, we construct its natural quasi-Hamiltonian counterpart $\N_G(\Gamma)$ by replacing copies of the cotangent bundle $T^*G$ with copies of the double $D(G)$. The main results establish that $\N_G(\Gamma)$ depends only on the homotopy class of the quiver (Proposition \ref{hom inv}) and satisfies a gluing formula under quiver composition (Theorem \ref{gluing}). These properties will be essential for the TQFT construction in Section \ref{sect 8}. We briefly recall the relevant definitions from \cite{maiza2025lax}. The reader familiar with that paper may skip to section \ref{sect 8}.

\subsection{Quivers and their boundaries}

\begin{Def}
A \defn{quiver} is a tuple $\Gamma = (V, E, s, t)$ where:
\begin{itemize}
    \item $V$ is a finite set of \defn{vertices};
    \item $E$ is a finite set of \defn{edges};
    \item $s, t : E \to V$ are maps called the \defn{source} and \defn{target} maps, respectively.
\end{itemize}
We assume that $\Gamma$ has no isolated vertices, i.e.\ $\operatorname{im}(s) \cup \operatorname{im}(t) = V$.  
The \defn{degree} of a vertex $v \in V$ is the sum
\[
\deg(v) \coloneqq \deg_{\mathrm{in}}(v) + \deg_{\mathrm{out}}(v),
\]
where $\deg_{\mathrm{in}}(v) = |t^{-1}(v)|$ is the number of incoming edges and $\deg_{\mathrm{out}}(v) = |s^{-1}(v)|$ the number of outgoing ones.
The \defn{boundary} of $\Gamma$, denoted $\partial\Gamma$, is the set of vertices of degree $1$. 
It decomposes as a disjoint union
\[
\partial\Gamma = \partial\Gamma^- \sqcup \partial\Gamma^+,
\]
where
\[
\partial\Gamma^- \coloneqq \{v \in \partial\Gamma : \deg_{\mathrm{out}}(v) = 1\}
\quad\text{and}\quad
\partial\Gamma^+ \coloneqq \{v \in \partial\Gamma : \deg_{\mathrm{in}}(v) = 1\}
\]
are the \defn{incoming} and \defn{outgoing boundaries}, respectively.  
The complement
\[
\Gamma\sint \coloneqq V \setminus \partial\Gamma
\]
is the set of \defn{interior vertices}.
\end{Def}

The following diagram illustrates these notions:

\begin{equation}
\begin{tikzcd}[
  sep=small,
  column sep={2cm,between origins},
  row sep={1cm,between origins},
  cells={nodes={inner sep=0pt, outer sep=0pt}},
  chevW/.store in=\chevW, chevW=0.10cm,
  chevH/.store in=\chevH, chevH=0.07cm,
  chevShort/.store in=\chevShort, chevShort=0.1ex,
  chevarrow/.style={
    no head,
    line width=0.9pt, line cap=round,
    shorten <=-\chevShort, shorten >=-\chevShort,
    postaction={
      decorate,
      decoration={
        markings,
        mark=at position .5 with {
          \draw[line width=0.9pt] (-\chevW,-\chevH) -- (0,0) -- (-\chevW,\chevH);
        }
      }
    }
  }
]
    \textcolor{red}{\bullet} & \bullet & \textcolor{blue}{\bullet} \\
    {\text{\textcolor{red}{$\substack{\text{incoming}\\\text{boundary}\\\partial\Gamma^-}$}}} 
      & \substack{\text{interior}\\\text{vertex}\\\Gamma_{\mathrm{int}}}
      & {\text{\textcolor{blue}{$\substack{\text{outgoing}\\\text{boundary}\\\partial\Gamma^+}$}}}
    \arrow[from=1-1, to=1-2, chevarrow, shorten <= -0.5pt]
    \arrow[from=1-2, to=1-3, chevarrow, shorten >= -0.5pt]
    \arrow[from=1-2, to=1-2, chevarrow, loop, in=60, out=120, distance=8mm]
\end{tikzcd}
\end{equation}

\subsection{Construction of $\N_G(\Gamma)$} \label{sect 8.2}

Let $G$ be a compact Lie group. Recall that the double $D(G) = G \times G$ carries a $G \times G$ quasi-Hamiltonian structure with respect to the action 
\begin{equation}
    (g,h)\cdot (a,b) = (gah^{-1},\Ad_h b),
\end{equation}
the moment map $\mu: D(G)\to G\times G$ given by $(a,b) \mapsto (\Ad_a b, b^{-1})$, and the two-form 
\[ 
\omega_{(a,b)} = \dfrac{1}{2}\Big\{ \langle \Ad_b a^*\theta^L \wedge a^{*}\theta^L \rangle + \langle a^*\theta^L \wedge (b^*\theta^L + b^*\theta^R) \rangle \Big\}. 
\]

If $\seg$ denotes the quiver consisting of a single edge, we define $\N_G(\seg) \coloneqq D(G)$. For a general quiver $\Gamma = (V, E, s, t)$, we associate a copy of $D(G)$ to each edge $e \in E$ and consider the fusion product of quasi-Hamiltonian spaces
\[
\bigotimes^{E} D(G) = \underbrace{D(G)\circledast D(G)\cdots \circledast D(G)}_{|E| \text{ times}}.
\]
The group $G^{V}$ acts on this fusion product via the embedding 
\[ 
G^V \to (G\times G)^E, \quad (g_v)_{v \in V} \mapsto (g_{t(e)},g_{s(e)})_{e\in E},
\] 
explicitly given by
\begin{equation}
    g\cdot (a,b) = (g_{t(e)}a_e g_{s(e)}^{-1}, \Ad_{g_{s(e)}}b_e)_{e\in E} \quad \text{for } g\in G^V,\, (a,b) \in \bigotimes^{E} D(G).
\end{equation}

Restricting to the subgroup $G^{\Gamma_{\sint}}$ (with the condition that $b_v = 1$ for all $v\in \partial \Gamma$), we get a quasi-Hamiltonian action with moment map 
\[ 
\mu: \bigotimes^{E} D(G) \to G^{\Gamma_{\text{int}}}, \quad (a,b) \mapsto \Big(\prod_{e \in t^{-1}(v)} \Ad_{a_e}b_e \prod_{e \in s^{-1}(v)} b_e^{-1}\Big)_{v\in \Gamma_{\text{int}}},
\]

and the two-form \[ \eta = \sum_{e\in E} \text{pr}_{e}^*\omega + \dfrac{1}{2} \langle \Big(\prod_{e \in t^{-1}(v)} \Ad_{a_e}b_e\Big)^{*}\theta^{L} \wedge \Big(\prod_{e \in s^{-1}(v)} b_e^{-1}\Big)^{*}\theta^{R} \rangle. \] (See \cite[\text{Theorem 6.1}]{alekseev1998}). Here, $\text{pr}_e: \prod_{e\in E}D(G) \too D(G)$ is the canonical projection to the $e$-factor.\\

The following lemma shows that $G^{\Gamma_{\sint}}$ acts freely on $\mu^{-1}(1).$

\begin{Lem} \label{free}
If $\Gamma$ is connected with non-empty boundary, the action of $G^{\Gamma_{\sint}}$ on $\mu^{-1}(1)$ is free.
\end{Lem}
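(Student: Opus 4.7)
The plan is to show, by a propagation argument along the underlying undirected graph of $\Gamma$, that the stabilizer in $G^{\Gamma_{\sint}}$ of any $(a,b) \in \mu^{-1}(1)$ is trivial. First I would translate the fixed-point condition $g \cdot (a,b) = (a,b)$ into edge-by-edge equations. Viewing $g \in G^{\Gamma_{\sint}}$ as an element of $G^V$ with $g_v = 1$ for $v \in \partial\Gamma$, the explicit formula for the action forces, for every edge $e \in E$,
\[
g_{t(e)}\, a_e\, g_{s(e)}^{-1} = a_e, \qquad \Ad_{g_{s(e)}} b_e = b_e.
\]
The first of these rearranges to the ``propagation identity'' $g_{t(e)} = \Ad_{a_e}(g_{s(e)})$, so $g_{s(e)} = 1$ if and only if $g_{t(e)} = 1$.

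With this tool in hand, I would consider the set $V_1 \coloneqq \{v \in V : g_v = 1\}$; by construction it already contains $\partial\Gamma$, which is non-empty by hypothesis. The remaining step is to prove $V_1 = V$. For an arbitrary $v \in V$, connectedness of $\Gamma$ supplies an undirected path $v_0, v_1, \dots, v_n = v$ with $v_0 \in \partial\Gamma$; walking along this path and applying the propagation identity (or its inverse, depending on the orientation of the edge traversed) at each step, a straightforward induction yields $g_{v_i} = 1$ for all $i$, and hence $g = 1$.

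I do not foresee any serious obstacle. The argument is purely combinatorial and, interestingly, uses neither the second relation on $b_e$ nor the moment map condition $\mu(a,b) = 1$; in fact the restricted $G^{\Gamma_{\sint}}$-action is already free on the whole ambient space $\bigotimes^{E} D(G)$. The only bookkeeping points are to handle both possible orientations of each edge along the chosen path, and to observe that loops at a vertex impose only commutation conditions (not triviality), but they never disconnect the graph, so a propagating path through non-loop edges is always available.
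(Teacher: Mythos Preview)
Your proposal is correct and follows essentially the same argument as the paper: both derive the propagation identity $g_{t(e)} = a_e g_{s(e)} a_e^{-1}$ from the fixed-point equations, anchor at a boundary vertex where $g_v = 1$, and then walk through the connected underlying graph to force $g_v = 1$ everywhere. Your additional remarks---that neither the second relation on $b_e$ nor the moment-map condition $\mu(a,b)=1$ is actually used, and that loops pose no obstruction---are accurate observations that the paper does not make explicit.
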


\begin{proof}
Let $(a, b) \in \mu^{-1}(1)$ and suppose $g \in G^{\Gamma_{\sint}}$ satisfies $g \cdot (a, b) = (a, b)$. We must show that $g_v = 1$ for all $v \in \Gamma_{\sint}$. The stabilizer condition implies that for every edge $e \in E$,
\begin{equation}\label{eq:stab}
        g_{t(e)} a_e g_{s(e)}^{-1} = a_e \qquad \text{and} \qquad \Ad_{b_{s(e)}} b_e = b_e.
\end{equation}

The first equation is equivalent to
    \begin{equation}\label{eq:propagation}
        g_{t(e)} =  a_e g_{s(e)} a_e^{-1}.
    \end{equation}

Since $\partial\Gamma \neq \varnothing$, there exists an edge $e \in E$ adjacent to a boundary vertex. Suppose first that $s(e) \in \partial\Gamma^-$. By our convention, $b_{s(e)} = 1$, so \eqref{eq:propagation} gives
    \[
    g_{t(e)} = a_e \cdot 1 \cdot a_e^{-1} = 1.
    \]
Similarly, if $t(e) \in \partial\Gamma^+$, then $g_{t(e)} = 1$, and \eqref{eq:propagation} implies $g_{s(e)} = 1$. In either case, if the other endpoint of $e$ is an interior vertex, we conclude that $g$ is trivial there.\\

Now, Suppose we have established $g_v = 1$ for some vertex $v$, and let $f$ be any edge adjacent to $v$. If $v = s(f)$, then \eqref{eq:propagation} gives $g_{t(f)} = a_f \cdot 1 \cdot a_f^{-1} = 1$. If $v = t(f)$, then $1 = a_f g_{s(f)} a_f^{-1}$, hence $g_{s(f)} = 1$.

Since $\Gamma$ is connected, every interior vertex can be reached from the boundary by a path of edges. Repeating the above argument along such a path, we conclude that $g_v = 1$ for all $v \in \Gamma_{\sint}$. Therefore, $g = 1$ and the action is free.
\end{proof}

The freeness of $G^{\sint}$ on $\mu^{-1}(1)$ allows the quotient  
\[
\N_G(\Gamma) \coloneqq \mu^{-1}(1)/G^{\Gamma_{\text{int}}}
\]
to be a smooth $G^{\partial \Gamma}$-quasi-Hamiltonian manifold. The residual moment map is
\[
\nu: \N_G(\Gamma) \to G^{\partial \Gamma}, \quad (a,b) \mapsto 
\begin{cases}
    \Ad_{a_e}b_e & \text{if } v = s(e) \in \partial\Gamma^-,\\
    b_e^{-1} & \text{if } v = t(e) \in \partial\Gamma^+,
\end{cases}
\]
where $e$ is the unique edge adjacent to the boundary vertex $v$. The dimension of $\N_G(\Gamma)$ is 
\begin{align*}
\dim \N_G(\Gamma) &= \dim(\mu^{-1}(1)/G^{\Gamma_{\sint}})\\ &= \dim \Big(\bigotimes^{E} D(G)\Big) - 2\dim (G^{\Gamma_{\sint}})\\ &= 2|E|\dim G - 2|\Gamma_{\sint}|\dim G = 2(|E| - |\Gamma_{\sint}|)\dim G. 
\end{align*}

\begin{Rem}
As we mentioned in the introduction, the construction of spaces $\N_G(\Gamma)$ finds its motivation in the \emph{Hamiltonian deformation theory}: Each copy of $D(G)$ deforms smoothly to a copy $T^{*}G$. As the deformation theory is compatible with fusion and reduction \cite[Theorem~3.3, Corollary~3.2]{JMM2025deformations}, $\N_G(\Gamma)$ deforms smoothly to the Lax-Kirchhoff moduli space $\M(\Gamma).$
\end{Rem}
\subsection{Gluing and homotopy invariance}

The spaces $\N_G(\Gamma)$ satisfy a gluing formula analogous to the one for Lax-Kirchhoff moduli spaces. Given two oriented connected quivers $\Gamma_1$ and $\Gamma_2$ with $\partial \Gamma_1^{+} = \partial \Gamma_2^{-} =: D$, we form the composite quiver $\Gamma_1 \star \Gamma_2$ by identifying vertices in $D$:

\begin{figure}[ht]
\centering
\begin{tikzpicture}[
    vertex/.style={circle, draw=black, line width=0.7pt, minimum size=9pt, inner sep=0pt},
    bdin/.style={vertex, fill=red!60},
    bdout/.style={vertex, fill=blue!60},
    int/.style={vertex, fill=black!75},
    glue/.style={vertex, fill=orange!70, draw=orange!40!black},
    arr/.style={-{Stealth[length=2.5mm, width=1.8mm]}, line width=0.7pt, shorten <=2pt, shorten >=2pt},
    lbl/.style={font=\footnotesize}
]
    \begin{scope}[shift={(0,0)}]
        \node[bdin] (a1) at (0, 1.2) {};
        \node[bdin] (a2) at (0, -1.2) {};
        \node[int] (v1) at (1.5, 0) {};
        \node[glue] (s1) at (3, 0) {};
        
        \draw[arr] (a1) -- (v1);
        \draw[arr] (a2) -- (v1);
        \draw[arr] (v1) -- (s1);
        
        \node[lbl, below=0.6cm] at (1.5, -1.2) {$\Gamma_1$};
        \node[lbl, left=0.1cm] at (a1) {$\partial\Gamma_1^-$};
        \node[lbl, above right=-0.05cm and 0.1cm] at (s1) {$\partial\Gamma_1^+$};
    \end{scope}
    
    \node[font=\Large] at (4, 0) {$\star$};
    
    \begin{scope}[shift={(5,0)}]
        \node[glue] (s2) at (0, 0) {};
        \node[int] (v2) at (1.5, 0) {};
        \node[bdout] (b1) at (3, 1.2) {};
        \node[bdout] (b2) at (3, -1.2) {};
        
        \draw[arr] (s2) -- (v2);
        \draw[arr] (v2) -- (b1);
        \draw[arr] (v2) -- (b2);
        
        \node[lbl, below=0.6cm] at (1.5, -1.2) {$\Gamma_2$};
        \node[lbl, above left=-0.05cm and 0.1cm] at (s2) {$\partial\Gamma_2^-$};
        \node[lbl, right=0.1cm] at (b1) {$\partial\Gamma_2^+$};
    \end{scope}
    
    \node[font=\Large] at (9.2, 0) {$=$};
    
    \begin{scope}[shift={(10.5,0)}]
        \node[bdin] (ra1) at (0, 1.2) {};
        \node[bdin] (ra2) at (0, -1.2) {};
        \node[int] (rv1) at (1.5, 0) {};
        \node[int] (rv2) at (3, 0) {};
        \node[bdout] (rb1) at (4.5, 1.2) {};
        \node[bdout] (rb2) at (4.5, -1.2) {};
        
        \draw[arr] (ra1) -- (rv1);
        \draw[arr] (ra2) -- (rv1);
        \draw[arr] (rv1) -- (rv2);
        \draw[arr] (rv2) -- (rb1);
        \draw[arr] (rv2) -- (rb2);
        
        \node[lbl, below=0.6cm] at (2.25, -1.2) {$\Gamma_1 \star \Gamma_2$};
    \end{scope}
    
    \begin{scope}[shift={(3.5,-3)}]
        \node[bdin] at (0, 0) {};
        \node[lbl, right=0.15cm] at (0.15, 0) {Incoming boundary $\partial\Gamma^-$};
        
        \node[bdout] at (4, 0) {};
        \node[lbl, right=0.15cm] at (4.15, 0) {Outgoing boundary $\partial\Gamma^+$};
        
        \node[int] at (8, 0) {};
        \node[lbl, right=0.15cm] at (8.15, 0) {Interior vertex $\Gamma_{\mathrm{int}}$};
    \end{scope}
\end{tikzpicture}
\caption{The gluing operation $\star$ on oriented quivers: $\partial\Gamma_1^+ = \partial\Gamma_2^-$ becomes an interior vertex in $\Gamma_1 \star \Gamma_2$.}
\label{fig:gluing}
\end{figure}
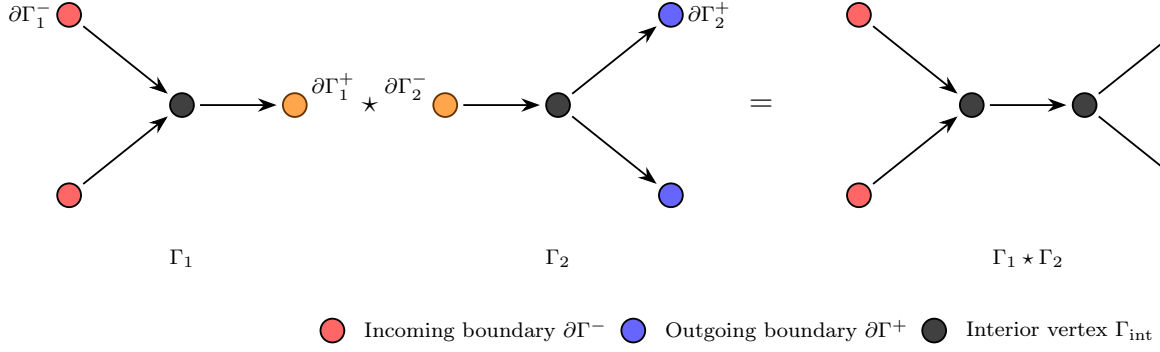

\begin{Thm} \label{gluing}
Let $\Gamma_1,\Gamma_2$ be two oriented quivers with boundary such that $\partial \Gamma_1^+ = \partial \Gamma_2^- = D.$ Then, \[ \N_G(\Gamma_1 * \Gamma_2) \cong (\N_G(\Gamma_1)\circledast \N_G(\Gamma_2))\sll{}G^D\]
as $G^{\partial \Gamma_1^-} \times G^{\partial \Gamma_2^+}$ quasi-Hamiltonian manifolds.
\end{Thm}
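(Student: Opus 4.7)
The plan is to realize $\N_G(\Gamma_1 \star \Gamma_2)$ as an iterated quasi-Hamiltonian reduction, singling out the contribution of the newly glued interior vertices $D$, and then invoke associativity of fusion and reduction. First, I would unpack the combinatorics: under $\star$ we have $E(\Gamma_1 \star \Gamma_2) = E(\Gamma_1) \sqcup E(\Gamma_2)$, $(\Gamma_1 \star \Gamma_2)_{\mathrm{int}} = (\Gamma_1)_{\mathrm{int}} \sqcup D \sqcup (\Gamma_2)_{\mathrm{int}}$, and $\partial(\Gamma_1 \star \Gamma_2) = \partial\Gamma_1^- \sqcup \partial\Gamma_2^+$. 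The key observation is that identifying the vertices in $D$ across the two quivers corresponds at the level of the ambient space to internal fusion of the two $G^D$-actions:
\[
\bigotimes^{E(\Gamma_1) \sqcup E(\Gamma_2)} D(G)
\;\cong\;
\bigl(\bigotimes^{E(\Gamma_1)} D(G)\bigr) \circledast \bigl(\bigotimes^{E(\Gamma_2)} D(G)\bigr),
\]
where the fusion combines the $G^D$-factor coming from $\partial\Gamma_1^+$ with that from $\partial\Gamma_2^-$. I would verify this identification by comparing moment maps and two-forms: for $v \in D$, the component $\mu_v = \prod_{e \in t^{-1}(v)} \mathrm{Ad}_{a_e} b_e \cdot \prod_{e \in s^{-1}(v)} b_e^{-1}$ splits as $\mu_{1,v}\mu_{2,v}$ because $t^{-1}(v) \subset E_1$ and $s^{-1}(v) \subset E_2$, matching the fusion rule for moment maps; the vertex-correction term at $v \in D$ in the two-form formula of Section~\ref{sect 8.2} matches the fusion correction $\tfrac{1}{2}\langle \mu_{1,v}^*\theta^L \wedge \mu_{2,v}^*\theta^R \rangle$.

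Second, I would apply reduction in stages. Writing $G^{(\Gamma_1 \star \Gamma_2)_{\mathrm{int}}} = \bigl(G^{(\Gamma_1)_{\mathrm{int}}} \times G^{(\Gamma_2)_{\mathrm{int}}}\bigr) \times G^D$ and using associativity of quasi-Hamiltonian reduction,
\[
\N_G(\Gamma_1 \star \Gamma_2)
= \Big(\bigotimes^{E_1 \sqcup E_2} D(G) \sll{} \bigl(G^{(\Gamma_1)_{\mathrm{int}}} \times G^{(\Gamma_2)_{\mathrm{int}}}\bigr)\Big) \sll{} G^D.
\]
Because the inner reducing group acts only on its respective fusion factor, the inner reduction commutes with fusion and produces $\N_G(\Gamma_1) \circledast \N_G(\Gamma_2)$ as a $G^{\partial\Gamma_1^-} \times G^D \times G^{\partial\Gamma_2^+}$-quasi-Hamiltonian space, whose residual $G^D$-moment map is precisely $(\mu_{1,v}\mu_{2,v})_{v \in D}$. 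Reducing by this remaining $G^D$ yields the desired isomorphism, and the residual moment map to $G^{\partial\Gamma_1^-} \times G^{\partial\Gamma_2^+}$ agrees on both sides by construction. Freeness of the $G^D$-action on the zero level set, needed for smoothness of the quotient, follows from Lemma~\ref{free} applied to the connected quiver $\Gamma_1 \star \Gamma_2$ with non-empty boundary.

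The main obstacle is the compatibility of fusion and reduction in the quasi-Hamiltonian setting when the reducing group acts on only one fusion factor: the correction term in the fusion two-form involves moment maps that are themselves modified under reduction, so one must verify that the induced two-forms match precisely. This is essentially the content of the fusion/reduction formalism of \cite[Theorem 6.1]{alekseev1998}. Once this is in hand, the remaining work consists of moment-map bookkeeping, which is routine given the explicit residual moment maps on $\N_G(\Gamma_1)$ and $\N_G(\Gamma_2)$.
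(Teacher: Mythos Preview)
Your proposal is correct and follows essentially the same strategy as the paper: decompose the edge and interior-vertex sets of $\Gamma_1\star\Gamma_2$, identify the ambient fusion product as the fusion of the two factors along $G^D$, and then invoke reduction in stages. Your version is in fact more explicit than the paper's---you spell out the moment-map splitting $\mu_v=\mu_{1,v}\mu_{2,v}$ at $v\in D$, match the fusion correction in the two-form, and cite Lemma~\ref{free} for freeness---whereas the paper's proof is a terse paragraph appealing to functoriality and ``reduction by stages'' without writing these checks down.
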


\begin{proof}
The construction of $\N_G(\Gamma)$ is functorial with respect to gluing of
quivers, and each step in the definition is compatible with quasi-Hamiltonian
reduction.  To explain this, recall that for any oriented quiver $\Gamma$ with
boundary, the space $\N_G(\Gamma)$ is obtained by taking the fusion product of
copies of $D(G)$ indexed by the edges of $\Gamma$, followed by a reduction by
the product of the vertex groups $G^{\Gamma_{\mathrm{int}}}$.

Now suppose that $\Gamma_1$ and $\Gamma_2$ have matching boundary components
$\partial\Gamma_1^+=\partial\Gamma_2^- = D$.  Forming the glued quiver
$\Gamma_1 * \Gamma_2$ amounts to identifying the corresponding boundary
vertices and then performing the same fusion-and-reduction procedure.  If one
first constructs $\N_G(\Gamma_1)$ and $\N_G(\Gamma_2)$ separately, their
fusion product $\N_G(\Gamma_1)\circledast \N_G(\Gamma_2)$ carries a residual
$G^D$–action coming from the identified boundary.  Reducing by this action
implements exactly the same identification that appears in the construction of
$\N_G(\Gamma_1 * \Gamma_2)$.

Since quasi-Hamiltonian reduction is compatible with iterated reduction
(\emph{reduction by stages}), the order in which one performs the reductions
does not affect the final result.  Applying this principle to the two-step
procedure—first forming the fusion product, then reducing by $G^D$—shows that
\[
\N_G(\Gamma_1 * \Gamma_2)
\;\cong\;
\bigl(\N_G(\Gamma_1)\circledast \N_G(\Gamma_2)\bigr)\sll{} G^D,
\]
and the isomorphism respects the remaining
$G^{\partial\Gamma_1^-}\times G^{\partial\Gamma_2^+}$–action.  This proves the
claim.
\end{proof}

Two quivers $\Gamma_{1}$ and $\Gamma_2$ are \defn{homotopic} if one can be obtained from the other by a finite sequence of the following moves: 

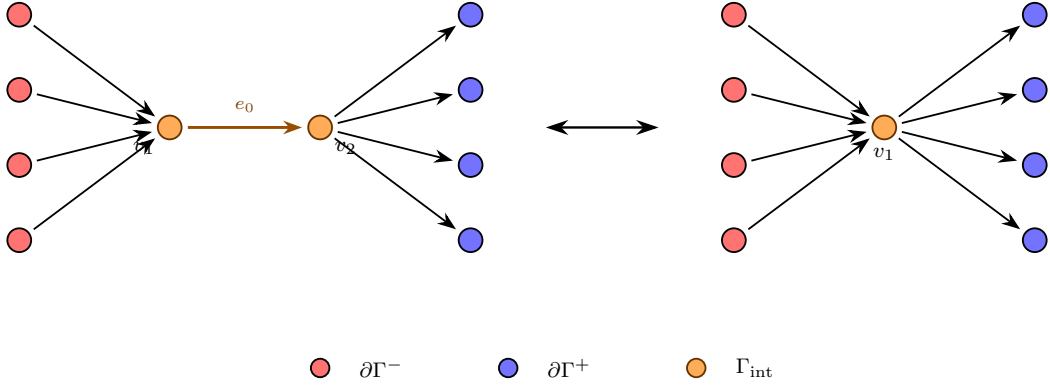
\begin{figure}[ht] \label{move}
\centering
\begin{tikzpicture}[
    vertex/.style={circle, draw=black, line width=0.7pt, minimum size=9pt, inner sep=0pt},
    bdin/.style={vertex, fill=red!55},
    bdout/.style={vertex, fill=blue!55},
    int/.style={vertex, fill=black!70},
    contract/.style={vertex, fill=orange!65, draw=orange!40!black},
    arr/.style={-{Stealth[length=2.5mm, width=1.8mm]}, line width=0.7pt, shorten <=2pt, shorten >=2pt},
    contract_arr/.style={arr, orange!60!black, line width=1.1pt},
    lbl/.style={font=\footnotesize}
]
    \begin{scope}[shift={(0,0)}]
        \node[bdin] (a1) at (0, 1.5) {};
        \node[bdin] (a2) at (0, 0.5) {};
        \node[bdin] (a3) at (0, -0.5) {};
        \node[bdin] (a4) at (0, -1.5) {};
        
        \node[contract] (v1) at (2, 0) {};
        \node[contract] (v2) at (4, 0) {};
        
        \node[bdout] (b1) at (6, 1.5) {};
        \node[bdout] (b2) at (6, 0.5) {};
        \node[bdout] (b3) at (6, -0.5) {};
        \node[bdout] (b4) at (6, -1.5) {};
        
        \draw[arr] (a1) -- (v1);
        \draw[arr] (a2) -- (v1);
        \draw[arr] (a3) -- (v1);
        \draw[arr] (a4) -- (v1);
        
        \draw[contract_arr] (v1) -- (v2) node[midway, above=2pt, font=\scriptsize, orange!50!black] {$e_0$};
        
        \draw[arr] (v2) -- (b1);
        \draw[arr] (v2) -- (b2);
        \draw[arr] (v2) -- (b3);
        \draw[arr] (v2) -- (b4);
        
        \node[lbl, below left=0.1cm] at (v1) {$v_1$};
        \node[lbl, below right=0.1cm] at (v2) {$v_2$};
    \end{scope}
    
    \draw[{Stealth[length=3mm, width=2mm]}-{Stealth[length=3mm, width=2mm]}, line width=0.9pt] 
        (7, 0) -- (8.5, 0);
    
    \begin{scope}[shift={(9.5,0)}]
        \node[bdin] (ra1) at (0, 1.5) {};
        \node[bdin] (ra2) at (0, 0.5) {};
        \node[bdin] (ra3) at (0, -0.5) {};
        \node[bdin] (ra4) at (0, -1.5) {};
        
        \node[contract] (rv) at (2, 0) {};
        
        \node[bdout] (rb1) at (4, 1.5) {};
        \node[bdout] (rb2) at (4, 0.5) {};
        \node[bdout] (rb3) at (4, -0.5) {};
        \node[bdout] (rb4) at (4, -1.5) {};
        
        \draw[arr] (ra1) -- (rv);
        \draw[arr] (ra2) -- (rv);
        \draw[arr] (ra3) -- (rv);
        \draw[arr] (ra4) -- (rv);
        \draw[arr] (rv) -- (rb1);
        \draw[arr] (rv) -- (rb2);
        \draw[arr] (rv) -- (rb3);
        \draw[arr] (rv) -- (rb4);
        
        \node[lbl, below=0.15cm] at (rv) {$v_1$};
    \end{scope}
    
    \begin{scope}[shift={(4,-3.2)}]
        \node[bdin, minimum size=7pt] at (0, 0) {};
        \node[lbl, right=0.2cm] at (0.2, 0) {$\partial\Gamma^-$};
        
        \node[bdout, minimum size=7pt] at (2.5, 0) {};
        \node[lbl, right=0.2cm] at (2.7, 0) {$\partial\Gamma^+$};
        
        \node[contract, minimum size=7pt] at (5, 0) {};
        \node[lbl, right=0.2cm] at (5.2, 0) {$\Gamma_{\mathrm{int}}$};
    \end{scope}
\end{tikzpicture}
\caption{The homotopy move on quivers: contracting an edge $e_0$ between two interior vertices $v_1, v_2 \in \Gamma_{\mathrm{int}}$ yields a homotopy-equivalent quiver.}
\label{fig:homotopy}
\end{figure}

\noindent
The number of edges on each side can be anything, as long as it's positive. From a geometric perspective, this move collapses an edge joining two interior vertices into a single vertex. What matters here is that homotopic quivers produce the same quasi-Hamiltonian space, up to isomorphism.

\begin{Thm} \label{hom inv}
Let $\Gamma_1,\Gamma_1$ be two homotopic connected oriented quivers such that $\partial \Gamma_1 = \partial \Gamma_2$. Then, the homotopy relation induces an isomorphism $\N_G(\Gamma_1) \cong \N_G(\Gamma_2)$ as $G^{\partial \Gamma}$-quasi-Hamiltonian manifolds.  
\end{Thm}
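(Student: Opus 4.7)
The plan is to reduce the theorem to a single elementary homotopy move and then establish a local reduction-by-stages identity. Since the homotopy relation is generated by contraction of a single interior edge (and its inverse), it suffices to construct an isomorphism $\N_G(\Gamma_1)\cong\N_G(\Gamma_2)$ when $\Gamma_2$ is obtained from $\Gamma_1$ by contracting an edge $e_0$ joining two distinct interior vertices $v_1,v_2\in\Gamma_{1,\mathrm{int}}$ into a single vertex $v\in\Gamma_{2,\mathrm{int}}$. Under this move $E_2=E_1\setminus\{e_0\}$, $\Gamma_{2,\mathrm{int}}=(\Gamma_{1,\mathrm{int}}\setminus\{v_1,v_2\})\cup\{v\}$, and $\partial\Gamma_1=\partial\Gamma_2$; the no-isolated-vertex convention together with $v_1,v_2$ having degree $\geq 2$ in $\Gamma_1$ ensures that every vertex of $V_1$ still occurs as an endpoint of some edge in $E_2$.

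Setting $N\coloneqq\bigotimes^{E_2}D(G)$, the fusion product decomposes canonically as $\bigotimes^{E_1}D(G)\cong N\circledast D(G)_{e_0}$, where $D(G)_{e_0}$ is attached along its $G^{v_1}\times G^{v_2}$ action at the endpoints of $e_0$. Splitting $G^{\Gamma_{1,\mathrm{int}}}=G^{\Gamma_{1,\mathrm{int}}\setminus\{v_1,v_2\}}\times G^{v_1}\times G^{v_2}$ and applying reduction in stages, the theorem reduces to establishing the local quasi-Hamiltonian isomorphism
\[
\bigl(N\circledast D(G)_{e_0}\bigr)\sll{(1,1)}\bigl(G^{v_1}\times G^{v_2}\bigr)\;\cong\;\widehat{N}\sll{1}G^{v},
\]
where $\widehat{N}$ denotes the internal fusion of $N$ merging its $G^{v_1}$- and $G^{v_2}$-actions into a single diagonal $G^v$-action with moment map $\mu_{N,v_1}\mu_{N,v_2}$, in the sense of \cite[Proposition~5.1]{alekseev2002quasi}. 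This identity can be viewed as a \emph{cylinder elimination}: inserting $D(G)$ between two $G$-boundaries and reducing both ends is equivalent to fusing those boundaries directly. The residual reduction by $G^{\Gamma_{1,\mathrm{int}}\setminus\{v_1,v_2\}}=G^{\Gamma_{2,\mathrm{int}}\setminus\{v\}}$ then produces $\N_G(\Gamma_2)$.

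To prove the local identity, I would exploit the free $G^{v_2}$-action $g_2\cdot(a_{e_0},b_{e_0})=(g_2 a_{e_0},b_{e_0})$ on the $D(G)_{e_0}$-factor to slice the fused moment-map fiber at $a_{e_0}=1$. On this slice the moment-map equations at $v_1$ and $v_2$ collapse to $b_{e_0}=\mu_{N,v_1}$ together with $\mu_{N,v_1}\mu_{N,v_2}=1$, the latter being precisely the level-one locus of the internal-fusion moment map on $\widehat{N}$. The residual subgroup of $G^{v_1}\times G^{v_2}$ preserving the slice is the diagonal, which recovers the $G^v$-action on $\widehat{N}$, and the $G^{\partial\Gamma_1}$-equivariance is manifest since boundary vertex groups act only on the $E_2$-factors.

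The main obstacle is verifying that the slicing is quasi-Hamiltonian, not merely a homeomorphism of reduced $G^{\partial\Gamma}$-spaces. One must show that the two-form $\eta$ on $\bigotimes^{E_1}D(G)$ from Section~\ref{sect 8.2}, restricted to $\{a_{e_0}=1,\,b_{e_0}=\mu_{N,v_1}\}$ and descended to the quotient by diagonal $G^v$, coincides with the internal-fusion two-form $\eta_N+\tfrac{1}{2}\langle\mu_{N,v_1}^*\theta^L\wedge\mu_{N,v_2}^*\theta^R\rangle$ on $\widehat{N}$. Using the explicit formula for $\omega$ on $D(G)$, the contribution of the $D(G)_{e_0}$-factor vanishes along the slice because $a_{e_0}^*\theta^L$ and $a_{e_0}^*\theta^R$ annihilate tangent vectors to $\{a_{e_0}=1\}$, while the fusion cross-terms at $v_1$ and $v_2$ involving $b_{e_0}^*\theta^L=\mu_{N,v_1}^*\theta^L$ and $b_{e_0}^*\theta^R=\mu_{N,v_1}^*\theta^R$ reassemble, via $\Ad$-invariance of $\langle\cdot,\cdot\rangle_\g$ and the moment-map constraint $\mu_{N,v_2}=\mu_{N,v_1}^{-1}$, into the desired correction. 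Once this pointwise computation is carried out, the local identity is established, and iterating over elementary moves completes the proof.
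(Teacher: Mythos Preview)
Your reduction to a single elementary contraction and the use of reduction in stages match the paper's strategy exactly. The difference lies in how the local step is handled. The paper does not prove your ``cylinder elimination'' identity from scratch: it invokes the standard fact $(M\circledast D(G))\sll{}G\cong M$ from \cite[Example~9.1]{alekseev1998} as a black box, and it reduces by the single group $G^{v_2}$ (the target of $e_0$) rather than by $G^{v_1}\times G^{v_2}$ simultaneously. After that reduction the $D(G)_{e_0}$-factor simply disappears, and the residual $G^{v_1}$-action on $\bigotimes_{e\neq e_0}D(G)$ is automatically the action at the merged vertex; no internal-fusion reformulation $\widehat{N}$ is needed, and no two-form computation is required.

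Your approach is correct in outline, but it is doing more work than necessary: the slicing at $a_{e_0}=1$ and the verification that the fusion cross-terms reassemble into $\tfrac{1}{2}\langle\mu_{N,v_1}^*\theta^L\wedge\mu_{N,v_2}^*\theta^R\rangle$ amount to reproving the identity $(M\circledast D(G))\sll{}G\cong M$ in this particular instance. If you are willing to cite that identity, the entire ``main obstacle'' paragraph becomes unnecessary. Conversely, your route has the advantage of being self-contained and of making the role of the internal-fusion two-form explicit, which could be useful if one later wants to track the isomorphism at the level of forms rather than just isomorphism classes.
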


\begin{proof}
The key ingredient is the standard identity
\begin{equation}\label{identity}
    (M \circledast D(G))\sll{} G \cong M,
\end{equation}
for any $H \times G$ quasi-Hamiltonian manifold $M$; see
\cite[Example~9.1]{alekseev1998}.  This relation expresses the fact that attaching a single copy of $D(G)$ along a $G$–boundary component does not change the resulting quasi-Hamiltonian space.

Let $\Gamma$ be a connected oriented quiver with boundary and at least two
vertices.  Consider the elementary modification described in Move~\ref{move}:
remove an interior edge $e_0$ with endpoints $v_1 = s(e_0)$ and
$v_2 = t(e_0)$, and collapse $v_2$ into $v_1$.  Denote the resulting quiver by
$\Gamma'$.  Explicitly,
\[
E' = E \setminus \{e_0\}, \qquad
V' = V \setminus \{v_2\}, \qquad
t' = t|_{E'}, \qquad
s'(e) =
\begin{cases}
v_1, & s(e)=v_2,\\[0.2em]
s(e), & \text{otherwise}.
\end{cases}
\]

To compare the associated quasi-Hamiltonian spaces, begin with the definition
\[
\N_G(\Gamma)
\;\cong\;
\Bigl(\bigotimes^{e\in E} D(G)\Bigr)\sll{} G^{\Gamma_{\mathrm{int}}}.
\]
Separate the factor corresponding to $e_0$:
\[
\N_G(\Gamma)
\cong\
\Bigl(\bigotimes^{e\neq e_0} D(G) \times D(G)\Bigr)
\sll{}
\bigl(G^{\Gamma_{\mathrm{int}}\setminus\{v_1,v_2\}}
      \times G^{v_1} \times G^{v_2}\bigr).
\]

The reduction by $G^{v_2}$ acts only on the $D(G)$–factor associated to $e_0$.
Applying the identity \eqref{identity} removes this factor without changing the
remaining quasi-Hamiltonian structure.  After performing this reduction, the
only remaining vertex group at the merged vertex is $G^{v_1}$, so we obtain
\[
\N_G(\Gamma)
\;\cong\;
\Bigl(\bigotimes^{e\neq e_0} D(G)\Bigr)
\sll{}
\bigl(G^{\Gamma_{\mathrm{int}}\setminus\{v_1,v_2\}} \times G^{v_1}\bigr)
\;\cong\;
\N_G(\Gamma').
\]

Since any homotopy of quivers is generated by successive applications of
Move~\ref{move} and its inverse, iterating the above argument shows that
$\N_G(\Gamma_1)$ and $\N_G(\Gamma_2)$ are isomorphic as
$G^{\partial\Gamma}$–quasi-Hamiltonian manifolds whenever
$\Gamma_1$ and $\Gamma_2$ are homotopic. 
\end{proof}

\subsection{From quivers to cobordisms}

The results above allow us to associate a well-defined quasi-Hamiltonian space to any connected oriented two-dimensional cobordism. Every such cobordism $\Sigma$ with non-empty boundary can be obtained by ``thickening'' a connected oriented quiver $\Gamma$, and Theorem \ref{hom inv} ensures that the isomorphism class of $\N_G(\Gamma)$ depends only on $\Sigma$. We therefore define
\[
\N(\Sigma) \coloneqq [\N_G(\Gamma)],
\]
where the bracket denotes the isomorphism class. If $\Sigma$ has genus $g$ with $m$ incoming and $n$ outgoing boundary components, then
\[
\dim \N(\Sigma) = 2(g + m + n - 1)\dim G.
\]

For disconnected quivers whose connected components $\Gamma_1, \ldots, \Gamma_k$ all have non-empty boundary, we set
\[
\N_G(\Gamma) = \N_G(\Gamma_1) \times \cdots \times \N_G(\Gamma_k),
\]
which is again a smooth quasi-Hamiltonian $G^{\partial\Gamma}$-space.

\begin{Rem}
The structural similarity between Theorem \ref{gluing}-\ref{hom inv} with their counterparts in \cite{maiza2025lax} is not coincidental but reflects that the deformation theory established in \cite{JMM2025deformations} respects the categorical properties on both sides. This suggests that the TQFT functor $\N: \mathbf{Cob}_2 \too \mathbf{QHam}$ (see Section 7 bellow) deforms in an appropriate way (for a future work) to the TQFT $\M: \mathbf{Cob}_2 \too \mathbf{Ham}.$ 
\end{Rem}

\section{$2D$ quasi-Hamiltonian Topological quantum field theories}\label{sect 8}

We now reinterpret the constructions of Section~\ref{sect 7} from the
perspective of topological quantum field theory.  The gluing statement
(Theorem~\ref{gluing}) together with the homotopy invariance
(Theorem~\ref{hom inv}) strongly indicates that the assignment
\(\Sigma \mapsto \N(\Sigma)\) should assemble into a symmetric monoidal
functor from a cobordism category to a category built from
quasi-Hamiltonian spaces.  One must proceed carefully, however, because
quasi-Hamiltonian reduction is not defined for arbitrary group actions.

\subsection{The cobordism category}

Let us recall the standard two-dimensional cobordism category
\(\mathbf{Cob}_2\).  Its objects are compact one-dimensional manifolds,
i.e.\ finite disjoint unions of circles.  A morphism from \(M\) to \(N\)
is an oriented surface \(\Sigma\) whose boundary decomposes as
\(\partial\Sigma = M^- \sqcup N\), where \(M^-\) denotes \(M\) with the
opposite orientation.  We write
\(\partial\Sigma^- = M\) and \(\partial\Sigma^+ = N\) for the incoming
and outgoing boundaries.

If \(\Sigma_1\) and \(\Sigma_2\) are composable cobordisms with
\(\partial\Sigma_1^+ = \partial\Sigma_2^-\), their composite
\(\Sigma_2 \circ \Sigma_1\) is obtained by gluing along this common
boundary.  Theorem~\ref{gluing} identifies this geometric gluing with a
quasi-Hamiltonian reduction:
\begin{equation}\label{66b1kwd2}
    \N(\Sigma_2 \circ \Sigma_1)
    \;=\;
    \bigl(\N(\Sigma_1) \circledast \N(\Sigma_2)\bigr)
    \sll{} G^n,
\end{equation}
where \(n\) is the number of connected components of
\(\partial\Sigma_1^+ = \partial\Sigma_2^-\).

\subsection{The quasi-Hamiltonian category}

Following the framework of \cite{mooretachikawa2012}, we now describe a
category that will serve as the target of our functor.  The main
subtlety is that quasi-Hamiltonian reduction requires the acting group
to act freely, so composition of morphisms cannot always be performed.
We therefore begin with a partial category. Objects are compact Lie groups.  A morphism \(G \to H\) is an
isomorphism class of quasi-Hamiltonian \(G \times H\)-spaces.  Two
morphisms \(M : G \to H\) and \(N : H \to I\) are said to be
\emph{composable} when the \(H\)-action on \(M \times N\) is free; in that case their composite is defined by
\[
    N \circ M
    \;=\;
    (M \circledast N) \sll{} H : G \to I.
\]
The identity morphism at \(G\) is the double \(D(G)\) equipped with its
canonical \(G \times G\)-quasi-Hamiltonian structure, which is justified by the identity~\eqref{identity}.

To obtain a category, we apply the
Wehrheim--Woodward completion procedure
\cite{wehrheim2010functoriality}.  In the resulting category
\(\mathbf{QHam}\), morphisms are finite sequences of the original morphisms, modulo the equivalence relation generated by
composing adjacent composable pairs.  Further details may be found in \cite{crooks2024moore,cazassus2019two}.  The monoidal structure on
\(\mathbf{QHam}\) is inherited from the cartesian product of Lie groups
and quasi-Hamiltonian manifolds.

\subsection{Construction of the TQFT}

We now construct a symmetric monoidal functor
\[
    \N : \mathbf{Cob}_2 \to \mathbf{QHam}
\]
that sends the circle \(S^1\) to \(G\) and the thickened quiver
\(\Sigma_\Gamma\) of a connected quiver \(\Gamma\) to the
quasi-Hamiltonian space \(\N_G(\Gamma)\).  Such a functor is precisely a
two-dimensional topological quantum field theory with values in
\(\mathbf{QHam}\); see \cite{kock2004}.

By \cite[\S1.4]{kock2004}, it is enough to specify the functor on a
finite generating set of cobordisms and verify the relations among them.
Let \(\Sigma_{m,n}\) denote the genus-zero cobordism from \(m\) circles to
\(n\) circles.  The standard generators are:
\begin{itemize}
    \item the cup \(C_{1,0} = \tqftcup\) and cap \(C_{0,1} = \tqftcap\),
    \item the two pairs of pants \(C_{2,1} = \tqftpoptwoone\) and
          \(C_{1,2} = \tqftpoponetwo\),
    \item the cylinder \(C_{1,1} = \tqftcyl\) and the swap \(\tqftswap\).
\end{itemize}

All generators except the cup and cap arise from quivers.  For the cup
and cap we set
\[
    \N(\tqftcup) = \N(\tqftcap) = \{*\},
\]
the one-point space with trivial \(G\)-action.  As we will see, this
choice is forced by functoriality.

The relations involving only quiver-induced cobordisms follow directly
from Theorems~\ref{gluing},~\ref{hom inv}.  What remains
is to check compatibility with the cup and cap.  Since composing
\(\N(\Sigma_\Gamma)\) with \(\N(\tqftcup)\) amounts to reducing by the
\(G\)-action at the corresponding boundary component, the following
proposition provides the necessary compatibility.

\begin{Prop}\label{bij94btr}
Let \(\Gamma\) be a connected quiver with non-empty boundary and let
\(v_0 \in \partial\Gamma\).  Denote by \(G^{v_0}\) the corresponding
factor of \(G^{\partial\Gamma}\), acting on \(\N(\Gamma)\) via its
quasi-Hamiltonian structure.  Then
\[
    \N_G(\Gamma) \sll{} G^{v_0}
    \;\cong\;
    \N_G(\Gamma \setminus \{v_0\}),
\]
where \(\Gamma \setminus \{v_0\}\) is obtained by removing \(v_0\) and
the unique edge incident to it.
\end{Prop}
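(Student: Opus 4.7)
The plan is to combine reduction by stages with the absorption identity $(X \circledast D(G)) \sll{} G \cong X$ that served as the main ingredient in Theorem~\ref{hom inv}. The argument mirrors the edge-contraction proof of homotopy invariance, with the role of the contracted interior edge replaced by the unique boundary edge $e_0$ incident to $v_0$, and the role of an interior-vertex reduction replaced by the additional reduction by $G^{v_0}$.

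Applying reduction by stages to the definition $\N_G(\Gamma) = (\bigotimes^E D(G)) \sll{} G^{\Gamma_{\mathrm{int}}}$, one first rewrites
\[
\N_G(\Gamma) \sll{} G^{v_0} \;\cong\; \Bigl(\bigotimes^E D(G)\Bigr) \sll{} \bigl(G^{\Gamma_{\mathrm{int}}} \times G^{v_0}\bigr),
\]
so that the reduction at $v_0$ is absorbed into the ambient reduction and $v_0$ is placed on the same footing as the interior vertices. Next, isolate the fusion factor corresponding to the unique edge $e_0$ incident to $v_0$:
\[
\bigotimes^E D(G) \;\cong\; \Bigl(\bigotimes^{E \setminus \{e_0\}} D(G)\Bigr) \circledast D(G)_{e_0}.
\]
Since $v_0$ is incident to no other edge, $G^{v_0}$ acts trivially on the first fusion factor and coincides with one of the two canonical $G$-actions on $D(G)_{e_0}$. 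The absorption identity applied to $X = \bigotimes^{E \setminus \{e_0\}} D(G)$ then removes $D(G)_{e_0}$ together with the $G^{v_0}$-quotient, leaving a quasi-Hamiltonian space for $G^{V \setminus \{v_0\}}$. Reducing both sides by $G^{\Gamma_{\mathrm{int}}}$ and using reduction by stages in the opposite direction yields
\[
\N_G(\Gamma) \sll{} G^{v_0} \;\cong\; \Bigl(\bigotimes^{E \setminus \{e_0\}} D(G)\Bigr) \sll{} G^{\Gamma_{\mathrm{int}}} \;\cong\; \N_G(\Gamma \setminus \{v_0\}),
\]
the last identification being the definition of $\N_G$ applied to the smaller quiver.

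The main technical obstacle lies in verifying that the residual quasi-Hamiltonian structure produced by the absorption really matches the structure intrinsically attached to $\Gamma \setminus \{v_0\}$. Specifically, after the factor $D(G)_{e_0}$ is absorbed, one must check that the action of the other endpoint $v_1$ of $e_0$ on $\bigotimes^{E \setminus \{e_0\}} D(G)$ is exactly the action dictated by the edges of $\Gamma \setminus \{v_0\}$ still incident to $v_1$, and that the moment map contribution at $v_1$ that originally involved $a_{e_0}$ and $b_{e_0}$ collapses correctly once $b_{e_0} = 1$ is imposed and the $G^{v_0}$-quotient is taken. This is the same bookkeeping performed in the proof of Theorem~\ref{hom inv}, transplanted to the boundary setting, and it determines which factor of the $G \times G$-structure on $D(G)_{e_0}$ is paired with $G^{v_0}$ in the absorption step.
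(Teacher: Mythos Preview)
Your proposal is correct and follows essentially the same route as the paper: isolate the $D(G)_{e_0}$ factor, reduce it away by $G^{v_0}$, and identify what remains with $\N_G(\Gamma\setminus\{v_0\})$ via reduction by stages. The only cosmetic difference is that the paper phrases the key step as $D(G)\sll{}G=\{*\}$ directly (so that the $e_0$-factor collapses to a point inside the fusion product), whereas you invoke the absorption identity \eqref{identity}; since $G^{v_0}$ acts trivially on the complementary factor, your use of \eqref{identity} is exactly this degenerate case.
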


\begin{proof}
Assume for concreteness that \(v_0\) lies in \(\partial\Gamma^+\); the
other case is analogous.  Let \(e_0\) be the unique edge with
\(t(e_0)=v_0\).  Then
\[
    \N_G(\Gamma)
    =
    \Bigl(\bigotimes_{e\in E} D(G)\Bigr)
    \sll{} G^{\Gamma_{\mathrm{int}}}
    =
    \Bigl(\bigotimes_{e\neq e_0} D(G) \circledast D(G^{e_0})\Bigr)
    \sll{} G^{\Gamma_{\mathrm{int}}}.
\]
The factor \(G^{v_0}\) acts by the left action on \(D(G^{e_0})\).  Since
the quasi-Hamiltonian reduction of \(D(G)\) by the left \(G\)-action is a
trivial, we obtain
\[
    \N_G(\Gamma) \sll{} G^{v_0}
    =
    \Bigl(\bigotimes_{e\neq e_0} D(G) \circledast \{*\}\Bigr)
    \sll{} G^{\Gamma_{\mathrm{int}}}
    =
    \N_G(\Gamma \setminus \{v_0\}),
\]
as required.
\end{proof}

By \cite[Theorem~3.6.19]{kock2004}, the above assignments extend to a
symmetric monoidal functor \(\mathbf{Cob}_2 \to \mathbf{QHam}\).

\begin{Rem}
Only closed surfaces (those without boundary) give rise to non-trivial
composites(i.e., morphisms represented by sequences of length greater than one) in the completed category \(\mathbf{QHam}\), realized as singular quasi-Hamiltonian spaces.  All other
cobordisms correspond to smooth morphisms of length one.
\end{Rem}

Finally, note that the TQFT is uniquely determined by the quiver
thickenings.  Indeed, if \(\Gamma\) is a connected quiver with
\(v_0 \in \partial\Gamma^+\), functoriality forces
\[
    (\N_G(\Gamma) \times \N(\tqftcup)) \sll{} G
    \;\cong\;
    \N_G(\Gamma \setminus \{v_0\}).
\]
A dimension argument shows that \(\N(\tqftcup)\) must be zero-dimensional,
and connectedness of the spaces involved implies that it must be a
singleton.

\begin{Thm}[TQFT valued in quasi-Hamiltonian spaces]\label{main TQFT}
Let \(G\) be a compact connected Lie group.  There exists a unique
two-dimensional topological quantum field theory
\[
    \N : \mathbf{Cob}_2 \to \mathbf{QHam}
\]
sending the circle to \(G\) and the thickening \(\Sigma_\Gamma\) of any
connected quiver \(\Gamma\) with non-empty boundary to the
quasi-Hamiltonian space \(\N(\Gamma)\).
\qed
\end{Thm}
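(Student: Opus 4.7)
The plan is to invoke the finite generators-and-relations presentation of $\mathbf{Cob}_2$ recorded in \cite[\S1.4]{kock2004}: it suffices to specify a symmetric monoidal assignment on the six standard generators (cup, cap, the two pairs of pants, the cylinder, and the swap) and to verify that the defining relations of a commutative Frobenius algebra hold in $\mathbf{QHam}$. Once that is done, \cite[Theorem~3.6.19]{kock2004} produces the desired functor, and since its values on all other cobordisms are forced by these relations, the functor is automatically unique given the prescribed values on generators.

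First I would record the assignment. Set $\N(S^1)=G$, $\N(\tqftcyl)=D(G)$ (the identity morphism at $G$), $\N(\tqftpoptwoone)=\N_G(Y_{\mathrm{in}})$ and $\N(\tqftpoponetwo)=\N_G(Y_{\mathrm{out}})$, where $Y_{\mathrm{in}}$ and $Y_{\mathrm{out}}$ are the tripod quivers whose thickenings realize the two pairs of pants; the swap is sent to the canonical symmetry of $\mathbf{QHam}$. Finally set $\N(\tqftcup)=\N(\tqftcap)=\{*\}$, the one-point quasi-Hamiltonian $G$-space with trivial action. Next I would verify the relations. Those involving only the quiver-induced generators---associativity and commutativity of the pairs of pants, compatibility with the swap, and the identity property of the cylinder---each amount to an equality of iterated gluings of tripod quivers up to homotopy; by Theorem~\ref{gluing} the geometric composite computes the quasi-Hamiltonian composite, and by Theorem~\ref{hom inv} the resulting quasi-Hamiltonian space depends only on the homotopy class. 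The remaining relations are the Frobenius unit and counit axioms, typified by $(\mathrm{id}\circledast\tqftcap)\circ\tqftpoponetwo=\tqftcyl$; under $\N$ each reduces to the statement that capping one leaf of a tripod and reducing by the corresponding $G$-factor reproduces $D(G)$, which is exactly Proposition~\ref{bij94btr} applied to the tripod.

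The main obstacle is that $\mathbf{QHam}$ is only a \emph{partial} category before Wehrheim--Woodward completion, so one must check that every composite appearing in a relation is honestly defined. For every composite arising from gluing two connected quivers with non-empty boundary, Lemma~\ref{free} guarantees freeness of the relevant action and the relations hold as genuine isomorphisms of quasi-Hamiltonian spaces. The remaining composites correspond to closed surfaces (the sphere, the torus, and their higher-genus generalizations); these live only in the completion, where the axioms of a symmetric monoidal category ensure that relations valid on all representable composites extend formally, so no additional verification is needed.

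For uniqueness, suppose $\N'$ is any TQFT satisfying the hypotheses. Functoriality applied to $\tqftcyl=(\mathrm{id}\circledast\tqftcap)\circ\tqftpoponetwo$ combined with Proposition~\ref{bij94btr} forces $(\N_G(Y_{\mathrm{out}})\circledast \N'(\tqftcap))\sll{} G\cong D(G)$; the dimension formula $\dim\N_G(\Gamma)=2(|E|-|\Gamma_{\mathrm{int}}|)\dim G$ then forces $\dim\N'(\tqftcap)=0$, and connectedness of the reduced space forces $\N'(\tqftcap)$ to be a single point with trivial action. The same reasoning applies to $\N'(\tqftcup)$, so $\N'$ agrees with $\N$ on all generators and therefore on all of $\mathbf{Cob}_2$.
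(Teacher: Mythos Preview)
Your proposal is correct and follows essentially the same approach as the paper: both invoke Kock's generators-and-relations presentation of $\mathbf{Cob}_2$, assign $\{*\}$ to the cup and cap, derive the quiver-induced relations from Theorems~\ref{gluing} and~\ref{hom inv}, handle the cup/cap compatibility via Proposition~\ref{bij94btr}, and establish uniqueness by the dimension-plus-connectedness argument. Your treatment is in fact slightly more explicit than the paper's in two places---you invoke Lemma~\ref{free} to justify that the composites appearing in the relations are honestly defined before completion, and you spell out the specific Frobenius relation used in the uniqueness step---but these are elaborations of the same strategy rather than a different route.
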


\end{document}